\documentclass[11pt]{amsart}
\usepackage{amsmath}
\usepackage{amssymb}
\usepackage{amscd, mathrsfs}
\usepackage{amsthm}
\usepackage[hidelinks]{hyperref}

\usepackage{tikz}
\usetikzlibrary{cd}
\usetikzlibrary{decorations.markings}

\usepackage{amsfonts}
\usepackage{subfigure}

\usepackage{comment}

\usepackage[all]{xy}
\usepackage{color}


\usepackage{enumerate} 
\usepackage{mathtools} 
\usepackage{microtype}
\usepackage{cancel}

\setcounter{secnumdepth}{2}
\setcounter{tocdepth}{1}

\numberwithin{equation}{section}

\setlength{\parindent}{.15in}
\setlength{\textwidth}{6in}
\setlength{\oddsidemargin}{.15in}
\setlength{\evensidemargin}{.15in}
\setlength{\parskip}{\smallskipamount}

\newtheorem{thm}[equation]{Theorem}
\newtheorem{proposition}[equation]{Proposition}
\newtheorem{prop}[equation]{Proposition}

\newtheorem{lemma}[equation]{Lemma}

\newtheorem{corollary}[equation]{Corollary}
\newtheorem{cor}[equation]{Corollary}

\theoremstyle{definition}
\newtheorem{definition}[equation]{Definition}

\newtheorem{defn}[equation]{Definition}

\newtheorem{example}[equation]{Example}
\newtheorem{remark}[equation]{Remark}



 
\def\XXint#1#2#3{{\setbox0=\hbox{$#1{#2#3}{\int}$} 
		\vcenter{\hbox{$#2#3$}}\kern-.5\wd0}}

\newcommand{\N}{\mathbb N}

\newcommand{\Cl}{\mathrm {Cl}}

\newcommand{\R}{\mathbb R}

\renewcommand{\S}{{\mathcal S}}

\renewcommand{\ker}{\operatorname{Ker}}

\newcommand{\Ad}{\operatorname{Ad}}
\newcommand{\ad}{\operatorname{ad}}

\newcommand{\Center}{\mathcal{Z}}
\newcommand{\Ideal}{\mathfrak{I}}

\newcommand{\sus}{\subseteq}
\newcommand{\g}{\mathfrak{g}}
\newcommand{\s}{\mathfrak{s}}
\newcommand{\h}{\mathfrak{h}}

\newcommand{\inn}{\in \{1, \ldots , n\}}
\newcommand{\edge}[1]{\mathfrak{e}_{#1}} 
\newcommand{\pedge}[1]{\mathfrak{e}(#1)}  
\newcommand{\lie}[1]{\mathrm{Lie}(#1)}

\renewcommand{\ker}{\operatorname{Ker}}

\newcommand{\spann}[1]{\mathrm{span}\{#1\}}

\newcommand{\doo}[1]{\partial{}_{#1}} 





\tikzset{->-/.style={decoration={
			markings,
			mark=at position #1 with {\arrow{>}}},postaction={decorate}}}
\tikzset{-<-/.style={decoration={
			markings,
			mark=at position #1 with {\arrow{<}}},postaction={decorate}}}

\newcommand{\ja}{\quad \text{and} \quad}

\hyphenation{di-men-sio-nal}
\hyphenation{Lip-schitz}
\hyphenation{nei-ghbor-hood}

\usepackage{bbold}

\begin{document}
	
	\title[Semigenerated Lie algebras]{
		Semigenerated step-3 Carnot algebras and applications to sub-Riemannian perimeter}
	
	%
	%
	\author{Enrico Le Donne}
	\author{Terhi Moisala}

	\address{\textsc{Enrico Le Donne}: 
		Dipartimento di Matematica, Universit\`a di Pisa, Largo B. Pontecorvo 5, 56127 Pisa, Italy \\
		\& \\
		University of Jyv\"askyl\"a, Department of Mathematics and Statistics, P.O. Box 35 (MaD), FI-40014, Finland}
	\email{enrico.ledonne@unipi.it}
	
	\address{\textsc{Terhi Moisala}:
		University of Jyv\"askyl\"a, Department of Mathematics and Statistics,
		P.O. Box 35 (MaD), FI-40014, Finland}
	\email{moisala.terhi@gmail.com}


	\renewcommand{\subjclassname}{%
		\textup{2010} Mathematics Subject Classification}

	\date{\today}

	\renewcommand{\subjclassname}{%
		\textup{2010} Mathematics Subject Classification}
	\subjclass[]{ 
		22E15, 
		53C17, 
		22A15,   
		22E25, 
		28A75,  
		49Q15, 
		22A15   
		22E15 
	}
	
	\keywords{Carnot algebra, horizontal half-space, semigroup generated, Lie wedge, constant intrinsic normal, 
	finite sub-Riemannian perimeter,
	Engel-type algebras, tipe diamond, trimmed algebra.}
	\thanks{
		E.L.D. was partially supported by the Academy of Finland (grant
		288501
		`\emph{Geometry of subRiemannian groups}' and by grant
		322898
		`\emph{Sub-Riemannian Geometry via Metric-geometry and Lie-group Theory}')
		and by the European Research Council
		(ERC Starting Grant 713998 GeoMeG `\emph{Geometry of Metric Groups}').
	}
	\begin{abstract}
 
	This paper contributes to the study of sets of finite intrinsic perimeter in Carnot groups. Our intent is to characterize in which groups the only sets with constant intrinsic normal are the vertical half-spaces. Our viewpoint is algebraic: such a phenomenon happens if and only if the semigroup generated by each horizontal half-space is a vertical half-space. We call \emph{semigenerated} those Carnot groups with this property.
	For Carnot groups of nilpotency step 3 we provide a complete characterization of semigeneration in terms of whether such groups do not have any Engel-type quotients. Engel-type groups, which are introduced here, are the minimal (in terms of quotients) counterexamples.
	In addition, we give some sufficient criteria for semigeneration of Carnot groups of arbitrary step. For doing this, we define a new class of Carnot groups, which we call type $(\Diamond)$ and which generalizes the previous notion of type $(\star)$ defined by M. Marchi. As an application, we get that in type $ (\Diamond) $ groups and in step 3 groups that do not have any Engel-type algebra as a quotient, one achieves a strong rectifiability result for sets of finite perimeter in the sense of Franchi, Serapioni, and Serra-Cassano.
	\end{abstract}
	
	\maketitle
	
	\setcounter{tocdepth}{2}
	\tableofcontents

\section{Introduction}
	 Carnot groups, which are by definition simply connected Lie groups with stratified Lie algebras, raised attention because of their natural occurrences in Geometric Measure Theory and Metric Geometry. 
	In particular, subsets of Carnot groups whose intrinsic normal is constantly equal to a left-invariant vector field appear both in the development of a theory \`a la De Giorgi for sets of locally finite perimeter in sub-Riemannian spaces \cite{fssc2, fssc,MR3353698} 
	and in the obstruction results for bi-Lipschitz embeddings into $L^1$ of non-abelian nilpotent groups \cite{Cheeger-Kleiner2}.
	The work \cite{fssc} by Franchi, Serapioni and Serra-Cassano provides complete understanding of sets with constant intrinsic normal in the case of Carnot groups with nilpotency step 2 by proving that they are half-spaces when read in exponential coordinates. However, in higher step the study appears to be much more challenging due to the more complex underlying algebraic structure, and only in the case of type $ (\star) $ groups and of filiform groups we have a satisfactory understanding of sets with constant intrinsic normal, see \cite{Marchi,Bellettini-LeDonne}.
	
	In a recent paper \cite{Bellettini-LeDonne2}, C. Bellettini and the first-named author of this article related the property of having constant intrinsic normal to the containment of distinguished constant-normal sets, which are semigroups generated by the horizontal half-space defined by the normal, as we shall explain soon. We shall use the following terminology: a \emph{horizontal half-space} of a stratified algebra $ \g $ with horizontal layer $ V_1 $ is the closure of either of the two parts into which a hyperplane divides $ V_1 $. A \emph{vertical half-space} is defined as the direct sum of a horizontal half-space and the derived subalgebra $ [\g,\g] $. By \cite[Corollary 2.31]{Bellettini-LeDonne2}, in exponential coordinates a Carnot group has the property that all its constant-normal sets are equivalent to vertical half-spaces if and only if the closure of the semigroup generated by 
	each horizontal half-space is 
	a vertical half-space.
	In Carnot groups with this property, one has the intrinsic $ C^1 $-rectifiability result for finite-perimeter sets 
	\`a la De Giorgi. 
	In arbitrary groups, the study of semigroups can still give some weaker rectifiability results, see
	\cite{DLMV}.
	
	In this paper, we continue the study of such semigroups from an algebraic viewpoint. In particular, we get to a complete characterization of those step-3 Carnot groups for which all constant-normal sets are vertical half-spaces. In addition, for Carnot groups of arbitrary nilpotency step, we give some sufficient criteria which generalize the previous work by M. Marchi \cite{Marchi}.
	
	
%
	
	\begin{definition} Given a Carnot group $ G $ with exponential map $\exp:\g\to G$,
	we say that a set  $W\subseteq \g$ is  {\em   semigenerating} if the closure of the  semigroup generated by $\exp(W)$ in $G$ contains the commutator subgroup $[G,G]$. We say also that the Lie algebra $\g$ is  {\em   semigenerated} if every horizontal half-space $W$ in $\g$ is semigenerating. 
	\end{definition}
\noindent
	We shall use the term \emph{Carnot algebra} to denote the (stratified) Lie algebra of a Carnot group, which is completely determined by the Lie group, see \cite{LeDonne:Carnot}.
	By the work \cite{fssc} of Franchi, Serapioni and Serra-Cassano, we know that step-2 Carnot algebras are semigenerated. Their work has been then extended by Marchi to a class of Carnot algebras, called of type $(\star)$, which includes examples of arbitrarily large nilpotency step. However, the basic example given by the Engel Lie algebra is not semigenerated, see  \cite{fssc,Bellettini-LeDonne}, and also Proposition \ref{prop:Engels:non-CNP}. From this example, it is easy to generate more examples of non-semigenerated algebras, because of the observation that each quotient of a semigenerated Lie algebra is semigenerated, see Proposition~\ref{prop:quotients_product_ample}. 
	Thus, for example  we have that 
	no stratified Lie algebra of rank 2 and step $\geq 3$ is   semigenerated because each of them has the Engel Lie algebra as quotient as pointed out in Remark~\ref{rmk:filiforms_engel_quots}.
	
	 Here, we mostly focus on step-3 Lie algebras, in which we discover a class of Lie algebras that are not semigenerated. Since they are a generalization of the Engel Lie algebra we call them Engel-type algebras. Our main result is that these algebras are the only obstruction to semigeneration.
	
	\begin{thm}\label{t:nonCNP:char:step6}
		Let $ \g $ be a stratified Lie algebra of step at most 3. Then $ \g $ is not   semigenerated if and only if it has one of the Engel-type algebras (as in Definition~\ref{def:nthEngel:intro}) as a quotient.
	\end{thm}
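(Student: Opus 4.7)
The reverse direction (Engel-type quotient obstructs semigeneration) is immediate from the two facts already in place: Engel-type algebras are not semigenerated (Proposition~\ref{prop:Engels:non-CNP}) and semigeneration descends to quotients (Proposition~\ref{prop:quotients_product_ample}); the contrapositive combination gives the implication.

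For the main (forward) direction, the plan is to argue by contrapositive: assume $\g$ has step at most $3$ and is \emph{not} semigenerated, and construct an Engel-type quotient. Fix a horizontal half-space $W \subseteq V_1$, defined by a linear functional $\phi \in V_1^*$, that witnesses the failure of semigeneration, and let $S \subseteq G$ denote the closure of the semigroup generated by $\exp(W)$. By hypothesis $S$ does not contain $[G,G] = \exp(V_2 \oplus V_3)$. The first step is to locate the obstruction inside $V_3$: projecting to the step-$2$ quotient $\g/V_3$, which is semigenerated by Franchi--Serapioni--Serra-Cassano~\cite{fssc}, one sees that the image of $S$ already contains the whole derived subgroup of $G/\exp(V_3)$. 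Hence the obstruction lies purely in $V_3$, and there is a proper subspace $U \subsetneq V_3$ such that $S \cap \exp(V_3) \subseteq \exp(U)$; replacing $U$ by the largest graded ideal contained in it, we set up the candidate quotient $\g/U$ that we aim to identify with an Engel-type algebra.

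The technical heart of the argument is a precise description of which $V_3$-directions can actually be reached inside $S$. Since we are in step $\le 3$, the Baker--Campbell--Hausdorff formula on $\g$ truncates at triple brackets, and the $V_3$-contribution of any product $\exp(w_1)\cdots \exp(w_k)$ with $w_i \in W$ is a linear combination of iterated brackets $[w_i,[w_j,w_\ell]]$. I would choose a basis of $V_1$ adapted to $\phi$ (with some $X_2$ satisfying $\phi(X_2)>0$ and a complement $X_1,\ldots,X_n$ spanning $\ker \phi$), and analyse the convex cone $C \subseteq V_3$ obtained when the $w_i$ range over $W$. Exploiting the freedom to scale by positive reals and to substitute $\pm X_j$ for $j\ge 3$ (since $\ker \phi \subseteq W \cap (-W)$), one shows that $C$ is a linear subspace plus a one-sided cone whose linear part is spanned by the brackets $[X_i,[X_j,X_k]]$ in which the $X_2$-direction appears at most once, together with all brackets whose $V_2$-intermediate equals $[X_i, X_j]$ for $i,j \ne 2$.

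The hardest step will be reading off the Engel-type structure from the obstruction. If $C$ is a proper subcone of $V_3$, there is a supporting hyperplane, and the vanishing relations forced in $\g/U$ should precisely mimic the defining relations of Engel-type algebras of Definition~\ref{def:nthEngel:intro}: namely, that a single iterated bracket $[X_2,[X_2,Y]]$-type direction survives modulo all other triple brackets. Verifying this match requires a case analysis according to the rank of the top layer of $\g/U$ and to the dimension of the reachable cone in $V_3$, together with a Jacobi-identity bookkeeping to eliminate the remaining triple brackets in the quotient. The step-$3$ hypothesis is essential throughout: it keeps BCH manageable, guarantees that the only obstructions to semigeneration arise from triple brackets, and prevents higher-order obstructions which would not reduce to an Engel-type quotient.
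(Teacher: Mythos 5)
Your easy direction is exactly the paper's: combine Proposition~\ref{prop:Engels:non-CNP} with Proposition~\ref{prop:quotients_product_ample}.

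For the forward direction, however, your sketch takes a genuinely different route and leaves the key step unaddressed. The paper does not describe the reachable cone in $V_3$ by a BCH computation. Instead, it first reduces to a \emph{trimmed} quotient, i.e., one with one-dimensional center (Proposition~\ref{prop:ample_quotient_small_center}, Corollary~\ref{lemma:minimal_nonCNP_quotient_step3}); in particular $\dim V_3 = 1$ in the relevant quotient, which is a much stronger normalization than quotienting by a proper subspace $U \subsetneq V_3$. This collapses your ``analyse the cone $C$'' step, because after trimming there is only one direction left in $V_3$ to hit or miss. The identification with an Engel-type algebra then proceeds structurally, not computationally: Proposition~\ref{prop:minimal_nonCNP_step3_isEngeln} shows that in a trimmed, non-semigenerated, step-3 algebra, $\partial W$ must be abelian (via Lemma~\ref{lemma:step3_ideal(invV2)_is_inv} plus trimmedness) and that every rank-$n$ stratified subalgebra whose horizontal layer is not $\partial W$ is again trimmed and non-semigenerated (via Lemma~\ref{lemma:CNPstep3_subalg_implies_CNP}); by induction on rank these are all isomorphic to $\mathbb{En}^{n-1}$, and the abstract characterization of Engel-type algebras (Proposition~\ref{prop:char_of_engels}) finishes the proof.

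The gap in your proposal is precisely the step you flag as hardest: ``reading off the Engel-type structure from the obstruction.'' You acknowledge you need a case analysis on rank and a Jacobi-identity bookkeeping, but you give no argument that the supporting hyperplane of $C$ forces the commutator relations $[Y_i,X]=T_i$, $[Y_i,T_i]=Z$ after passing to the quotient; nothing in the BCH expansion singles out the Engel-type brackets among all step-3 algebras with a one-dimensional missing direction. Without a mechanism to produce the abelian complement $\mathrm{span}\{Y_1,\dots,Y_n\}$ and the distinguished $X$ with $[X,V_2]=0$, the proof does not close. The paper supplies exactly this mechanism through the abelianness of $\partial W$ and the rank induction, which is why the reduction to trimmed quotients is not optional: it is what makes the inductive identification possible.
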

	
	\begin{defn}
		\label{def:nthEngel:intro}
		For each $ n\in \N $, we call \emph{$ n $-th Engel-type algebra} the $ 2(n+1) $-dimensional Lie algebra (of step 3 and rank $ n+1 $) with basis $ \{X,Y_i,T_i,Z\}_{i=1}^n $\,, where the only non-trivial bracket relations are given by $ [Y_i,X]=T_i $ and $ [Y_i,T_i] = Z $ for all $i \inn $.
	\end{defn}
	
	It is a challenge to understand how one can express in pure combinatorial terms the property of not having any Engel-type algebra as quotient. However, we have examples of step-3 Lie algebras that are not of type $(\star)$ but have no   Engel-type quotients. Hence, our result is a strict generalization of \cite{Marchi}.
	It is possible that Theorem~\ref{t:nonCNP:char:step6} holds also in case the nilpotency step is arbitrary; we have no counterexample. However, the situation in step greater than 3 is more technical. For this reason, we are only able to give a sufficient condition to ensure semigeneration in arbitrary step.
	Such criterion is not necessary (see Example~\ref{ex:tame_not_necessary}); however,
	as for the type $(\star)$ condition, it is computable in terms of brackets of some particular basis.
	
	In the next result, we assume the existence of a basis with specific properties. We could restate the condition in other forms (see Lemma~\ref{lemma:diamond_TFAE}), which alas are just as technical.
	
	\begin{definition}\label{def:diamond} 	Let $\g$ be a stratified Lie algebra. If, for each subalgebra $ \h $ of $ \g $ for which 
	$ \h\cap V_1 $ has codimension 1 in $V_1$,   there exists a basis $ \{X_1,\ldots,X_m\} $ of $ V_1 $ such that
		\begin{equation*}
		\ad_{X_i}^2 X_j \in\h \ja \ad_{\ad_{X_i}^kX_j}^2 (X_i) \in \h,\qquad \text{ for all } i,j = 1,\ldots, m \text{ and } k\geq 2 ,
		\end{equation*}
	then we say that $\g$ is  {\em of type $(\Diamond)$}.
\end{definition}
	
	\begin{thm}
		\label{thm:diamond}
		Every stratified Lie algebra 
		that  is of type $(\Diamond)$ (as in Definition~\ref{def:diamond})  is semigenerated.  
	\end{thm}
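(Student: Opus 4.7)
The plan is to pass to the infinitesimal picture. Fix a horizontal half-space $W \subseteq V_1$ with boundary hyperplane $H$, let $\h$ be the Lie subalgebra of $\g$ generated by $H$ (so by stratification $\h \cap V_1 = H$ has codimension one in $V_1$, which makes $\h$ admissible in Definition~\ref{def:diamond}), and let $S$ denote the closure in $G$ of the semigroup generated by $\exp(W)$. Set
\[
L := \{X \in \g : \exp(tX) \in S \text{ for all } t \geq 0\};
\]
this is a closed convex cone in $\g$ containing $W$ and invariant under the Carnot dilations $\delta_\lambda$. The goal becomes $[\g,\g] \subseteq L$, which yields $\exp([\g,\g]) \subseteq S$ and hence the semigeneration of $W$.

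Since $\pm Y \in W$ for every $Y \in H$, we have $\exp(tY) \in S$ for all $t \in \R$, so $H$ lies in the edge $E(L) := L \cap (-L)$. Because $E(L)$ is a Lie subalgebra of $\g$ (a standard consequence of iterated commutator formulas applied within the closed semigroup $S$), the whole $\h$ lies in $E(L)$, and $\exp(\h) \subseteq S$. Moreover, for every $g \in \exp(\h)$ both $g$ and $g^{-1}$ belong to $S$, so $S$ is invariant under conjugation by $\exp(\h)$. Translating via $\exp(tY)\exp(sX)\exp(-tY) = \exp(s\,e^{t\ad_Y}(X))$ gives the crucial closure property:
\[
\text{for every } Y \in \h \text{ and } X \in L, \quad e^{t\ad_Y}(X) \in L \text{ for all } t \in \R.
\]

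Now invoke the type $(\Diamond)$ hypothesis for this $\h$, producing a basis $\{X_1,\ldots,X_m\}$ of $V_1$ with $\ad_{X_i}^2 X_j \in \h$ and $\ad_{\ad_{X_i}^k X_j}^2(X_i) \in \h$ for all $i,j$ and $k \geq 2$. I would then prove by induction on $k \geq 2$ that $V_k \subseteq E(L)$. At each stage one starts from polynomial adjoint orbits $t \mapsto e^{t\ad_Y}(X) \in L$ with $Y$ in the already-controlled part of $E(L)$ (initially $\h$, later also the $V_j$'s with $2 \leq j \leq k$) and $X$ a basis vector lying in $L$; the bracket identities of type $(\Diamond)$ force every coefficient of this polynomial of order $\geq 2$ into $\h \subseteq E(L)$. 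Using that $L + E(L) = L$, one subtracts those corrections, rescales by $t^{-1}$ (or by $\delta_\lambda$ to isolate the correct graded component), and passes to the limit $t \to 0^\pm$ to deduce that the leading bracket lies in $L \cap (-L) = E(L)$. Running $i,j$ through spanning pairs then shows every generator of $V_{k+1}$ belongs to $E(L)$, completing the induction; the first bracket identity powers the base step $k=2$, and the second powers the inductive step needed for step $> 3$.

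The main obstacle is the bookkeeping in the extraction. The type $(\Diamond)$ identities concern specific diagonal expressions $\ad_{X_i}^2 X_j$ rather than arbitrary symmetric quantities $\ad_Y^2 Z$, so one must organize the argument to apply them only to compatible pairs and to absorb all mixed cross-terms into $\h$ via Jacobi reductions. The dilation structure of $\g$ is what allows individual weight components to be isolated from the polynomial $e^{t\ad_Y}(X)$, and the two bracket conditions of type $(\Diamond)$ are precisely what guarantee that no uncontrolled term survives the subtract--rescale--limit procedure. This generalises the strategy used by Marchi~\cite{Marchi} in the type $(\star)$ case, where the stronger symmetric bracket condition makes the bookkeeping largely automatic.
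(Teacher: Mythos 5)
Your setup is sound and mirrors the paper's Lemma~\ref{lem:CH} and Lemma~\ref{lemma:new_invariant_directions}: the tangent wedge $L = \mathfrak w_\s$ is a closed convex dilation-invariant cone, the edge $E(L) = \edge\s$ is a subalgebra containing $\h \supseteq \partial W$, the semigroup is $\Ad(\exp \edge\s)$-invariant, and the ``subtract–rescale–limit'' procedure is exactly the proof of Lemma~\ref{lemma:new_invariant_directions}: if $Y \in \edge\s$, $X \in \mathfrak w_\s$, and $\ad_Y^2 X \in \edge\s$, then all higher $\ad_Y^k X$ lie in $\edge\s$ automatically (since $\edge\s$ is a subalgebra), and letting $t\to\pm\infty$ in $e^{t\ad_Y}X - \sum_{k\geq 2}\frac{t^k}{k!}\ad_Y^k X = X + t[Y,X]$ gives $[Y,X] \in \edge\s$. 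Up to this point you are in step with the paper.

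The gap is in the induction you propose. You claim to prove $V_k \subseteq E(L)$ by induction on $k$, asserting that ``the bracket identities of type $(\Diamond)$ force every coefficient of this polynomial of order $\geq 2$ into $\h$'' when $Y$ ranges over the already-controlled $V_j$'s. This is not true. To deduce $[X,v] \in E(L)$ for a general $v \in V_k$ via the rescale–limit argument, you need $\ad_v^2 X \in E(L)$; this lives in $V_{2k+1}$, strictly above the controlled layers, and the type $(\Diamond)$ hypotheses say nothing about it unless $v$ happens to be of the very specific form $\ad_{X_i}^k X_j$ (that is what the second $(\Diamond)$-condition $\ad_{\ad_{X_i}^k X_j}^2 X_i \in \h$ controls). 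Already $v = [Y_i, Y_j]$ with $Y_i, Y_j \in \partial W$ is not of that form, so your inductive step breaks there. Your closing sentence concedes that ``mixed cross-terms'' need to be absorbed ``via Jacobi reductions,'' but this is precisely the hard part, and waving at Jacobi does not resolve it.

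What the paper does instead is not to induct on $V_k$ at all. It inducts only on the quantities $\ad_X^k Y_i$ for the single distinguished $X \in V_1 \setminus \partial W$ and the basis $\{Y_i\}$ of $\partial W$ (these are exactly the elements the $(\Diamond)$-conditions speak to, via Lemma~\ref{lemma:diamond_TFAE}), and then invokes Lemma~\ref{subalg}: a Hall-basis argument showing that, for a subalgebra $\h \supseteq \partial W$, having $\ad_X^k Y \in \h$ for all $Y \in \partial W$, $k\geq 1$ already forces $[\g,\g] \subseteq \h$. This reduction is the missing ingredient in your proposal; without it, the induction you describe cannot get off the ground in step $\geq 5$, because the higher-order term $\ad_v^2 X$ is genuinely uncontrolled for general $v$. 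With it, one never has to handle a general $v \in V_k$ at all.
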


	To put the results in perspective, we remind the reader that by \cite{fssc}, we know that if a Carnot group has the property that every set with constant intrinsic normal is a vertical half-space, then every set of locally finite sub-Riemannian perimeter have a strong rectifiability property. Since semigroups generated by horizontal half-spaces are minimal constant-normal sets with respect to set inclusion according to \cite{Bellettini-LeDonne2}, we obtain the following corollary.
	\begin{corollary}
	If the Lie algebra of a Carnot group is semigenerated (e.g., if it is of  type $ (\Diamond) $, see Theorem~\ref{thm:diamond}, or has step 3 and does not have any Engel-type algebra as a quotient, see Theorem~\ref{t:nonCNP:char:step6}), then the reduced boundary of every set of locally finite perimeter in $ G $ is intrinsically $ C^1 $-rectifiable.
	\end{corollary}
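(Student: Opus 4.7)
The plan is to derive the corollary directly from the chain of results cited in the introduction, without any fresh calculation. The hypothesis that we must exploit is simply that $\g$ is semigenerated, possibly as a consequence of Theorem~\ref{thm:diamond} or Theorem~\ref{t:nonCNP:char:step6}. All the analytic content has already been isolated in \cite{Bellettini-LeDonne2} and \cite{fssc}; the role of the present paper is only to verify the algebraic input in the two announced situations.

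First, I would unwind the definition. A horizontal half-space $W\subseteq V_1$ is, by assumption of semigeneration, such that the closure of the semigroup $S(W)\subseteq G$ generated by $\exp(W)$ contains the commutator subgroup $[G,G]$. Since $S(W)$ already contains $\exp(W)$, and in Carnot groups the set $\exp(W)\cdot [G,G]$ is exactly the vertical half-space associated to $W$ (via exponential coordinates), the closure $\overline{S(W)}$ coincides with this vertical half-space. Thus in a semigenerated Carnot group, the closure of the semigroup generated by every horizontal half-space is a vertical half-space, which is precisely the hypothesis appearing in \cite[Corollary~2.31]{Bellettini-LeDonne2}.

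Next, invoking \cite[Corollary~2.31]{Bellettini-LeDonne2}, I conclude that every subset of $G$ with constant intrinsic normal is equivalent, in exponential coordinates, to a vertical half-space. This is the structural statement that feeds into the De Giorgi-type rectifiability machinery of Franchi--Serapioni--Serra-Cassano. Namely, once one knows that the blow-ups of the reduced boundary at its points are (equivalent to) vertical half-spaces, the proof in \cite{fssc} shows that the reduced boundary of any set of locally finite $G$-perimeter is covered, up to a set of vanishing spherical Hausdorff measure of the appropriate dimension, by countably many intrinsic $C^1$ hypersurfaces. This is precisely the assertion of intrinsic $C^1$-rectifiability.

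There is no genuine obstacle in this argument: it is a bookkeeping step combining the algebraic criterion (semigeneration, verified in the two regimes of Theorems~\ref{thm:diamond} and \ref{t:nonCNP:char:step6}) with the reduction of \cite{Bellettini-LeDonne2} from semigeneration to the constant-normal classification, and then with the rectifiability theorem of \cite{fssc}. The only point that needs a line of justification is the identification of $\overline{S(W)}$ with the vertical half-space; once the definitions are spelled out, this is immediate from the fact that $[G,G]$ is a subgroup contained in $\overline{S(W)}$ and that $\exp(W)$ is a subsemigroup of $G$ modulo $[G,G]$.
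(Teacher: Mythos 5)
Your proposal is correct and follows essentially the same path the paper sketches in the paragraph preceding the corollary: semigeneration implies that $\overline{S(W)}$ is exactly the vertical half-space (since the vertical half-space $\exp(W)\cdot[G,G]$ is a closed semigroup containing $\exp(W)$, giving one inclusion, and semigeneration gives the reverse), so \cite[Corollary~2.31]{Bellettini-LeDonne2} yields that every constant-normal set is a vertical half-space, and then \cite{fssc} gives intrinsic $C^1$-rectifiability. This matches the paper's intent, which cites precisely this chain (Corollary~2.31 of \cite{Bellettini-LeDonne2} in the introduction, and the \cite{fssc} rectifiability theorem).
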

	
	The structure of the article is the following.
In 	Section~\ref{sec:prelim} we discuss some preliminaries. In addition to the notions of semigenerated and trimmed algebras, we introduce a useful set called the edge of a semigroup.	In Section~\ref{sec:sufficient} we analyze Lie algebras of type $(\Diamond)$ and prove  Theorem~\ref{thm:diamond}, see Corollary~\ref{cor:dimaond_ample}.
	Section~\ref{sec:low} is devoted to both a list	of examples	and of results valid for Carnot algebras of step at most 4.
In	Section~\ref{sec:Engel} we study the Engel-type algebras. 
	We show that they are the only non-semigenerated Carnot algebras with step 3 that are minimal with respect to quotient in a sense that will be made precise with a notion that we call trimmed (see Definition \ref{minimal}). We end with the proof of Theorem~\ref{t:nonCNP:char:step6}.

\section{Preliminaries}	\label{sec:prelim}
	We start with a small list of notations.  Then, in Definition~\ref{def:wedge}, we define the edge $\edge{\s}$ and the wedge $\mathfrak w _\s$ of a semigroup $\s$.  The notion of edge will be in the core of the discussion, since understanding if a horizontal half-space is semigenerating reduces to calculating the edge of its generated semigroup.  We provide several preliminary results regarding the size of such edges. In particular, we consider Lemma~\ref{lemma:new_invariant_directions} extremely useful and we shall exploit it repeatedly.
	In Proposition~\ref{lemma:minimal_TFAE} we provide equivalent conditions for the definition of trimmed algebra, a notion that is fundamental in our arguments in Section~\ref{sec:Engel}. 

	In this paper, the Lie algebra  $ \g $ will always be  stratified with layers $V_i=V_i(\g)$.
	We denote   by $\Center(\g)$ the center of a Lie algebra $\g$.
	Given an ideal   $ \mathfrak i $ of   $ \g $ we denote by
	$\pi=
	\pi_{\mathfrak i}:\g \to \g/\mathfrak i $ the quotient map, and we shall interchangeably use  the equivalent notations
	$ A/\mathfrak i =A+ \mathfrak i= \pi_{\mathfrak i}(A) ,$ for subsets $A$ of $\g$.
	We denote for a subset $ A $ of $ \g $ by 
	$\Ideal_\g(A)$ the ideal generated by $ A $ within $\g$, by $ \lie{A} $ the Lie algebra generated by $A$, and by  $\Cl(A)$ or $\bar A$ the closure of $ A $ in $\g$.

	We say that 
	$W$ is a 
	{\em horizontal half-space} 
	of $\g$ if 
	there exists a non-zero element $\lambda$ in the dual of $V_1$ such that
	\begin{equation}
	\label{halfspace}
	W= \lambda^{-1}( [ 0, +\infty)) \subseteq V_1.\end{equation}
	
	If $W$ is a 
	horizontal half-space defined by $\lambda\in (V_1)^*$ as in \eqref{halfspace}, then we define its (horizontal) boundary as
	$$\partial W\coloneqq \lambda^{-1}( \{ 0\}).$$ 
	Notice that $W$ is a   closed subset of $V_1$ and $\partial W$ is its boundary within $V_1$, which in our case will always contain $0 \in\g$. Observe that $\partial W$ is a hyperplane in $V_1$.
	
	Given a subset $W$ of $\g$, which we shall usually assume to be a horizontal half-space, the 
	semigroup $S_W$ generated by $\exp(W)$ is   described as 
	\begin{equation}
	\label{def:SW}
	S_W = \bigcup_{k=1}^\infty (\exp(W))^k,\end{equation}
	where
	\begin{equation*}
	\label{(exp(W))^k}
	(\exp(W))^k \coloneqq
	\{ \Pi_{i=1}^k \exp(w_i)\mid   w_1,\ldots, w_k\in W  \}.\end{equation*}
	Be aware that, even when $W$ is  closed (within $V_1$), the set  $S_W$ may not be closed within $\exp(\g)$.
	
	A vector space $\h$ of a stratified Lie algebra $\g$ is said to be {\em homogeneous} if there exist subspaces $\h_i$ of $V_i(\g)$ such that $\h=\h_1\oplus \ldots \oplus h_s$. Equivalently, we have that $\h$ is 
	homogeneous
	if and only if $\delta_\lambda \h = \h$ for all $\lambda>0$,  where $\delta_\lambda$ is the Lie algebra automorphism such that $\delta_\lambda (v)=\lambda v$ for $v\in V_1(\g)$.
	 We shall frequently use the fact that the center of a stratified Lie algebra is homogeneous:
	\begin{equation}\label{center:homog}
	\Center(\g) = \big(V_1(\g)\cap \Center(\g)  \big)\oplus \ldots \oplus\big(V_s(\g)\cap \Center(\g)  \big)\,.
	\end{equation}

\subsection{Lemmata in arbitrary algebras}
	In this subsection, let $ \g $ be a  Lie algebra of a simply connected Lie group $G$.
	We assume that $\g$ is  stratified  with nilpotency step equal to $s$. 
	Since $G$ is consequently nilpotent and simply connected, the exponential map $\exp: \g \to G$ is a bijection. We then have a correspondence between subsets 
	$\s\sus \g$ and subsets $S=\exp(\s)\subseteq G$.
	
	\begin{definition}\label{def:wedge}
		We associate with every subset $\mathfrak s \subseteq \g$ the following two sets
		\begin{equation}\label{wedge}
		\mathfrak w _\s\coloneqq \{X\in \mathfrak g \; : \; \R_+ X \subseteq \s \}.
		\end{equation}
		\begin{equation}\label{inv0}
		\edge{\s} \coloneqq \mathfrak w _\s \cap (-  \mathfrak w _\s) = \mathfrak w _\s \cap \mathfrak w _{-\s} .
		\end{equation}
		The set $ \mathfrak w _\s$ is known  as the {\em  tangent wedge of $\s$}
		and $ \edge{\s}$ as the {\em  edge of the wedge} $ \mathfrak w _\s$, see \cite[Page 2 and page 19]{Hilgert_Neeb:book_semigroup}. An equivalent definition for $ \edge{\s}$ is
		\begin{equation}\label{inv}
		\edge{\s}= \{X\in \mathfrak g \; : \; \R X \subseteq \s \}.
		\end{equation}
	\end{definition}
 Regarding the next result, we claim very little originality.
The arguments are mostly taken from \cite{Hilgert_Neeb:book_semigroup} and \cite{Bellettini-LeDonne2}.
	Also,
	the notions of 
	cone and
	convexity that we shall use are the usual ones with respect to the vector-space structure of the Lie algebra.
	\begin{lemma}\label{lem:CH}
		Let $G$ be a Lie group whose exponential map $\exp:\g\to G$ is injective.
		Let $\mathfrak s \subseteq \g$ be such that $\exp(\mathfrak s)$ is a  semigroup.
		Then  the     sets $ \edge{\s}$ and $ \mathfrak w _\s$, defined in \eqref{inv} and \eqref{wedge}, respectively, satisfy the following properties:
		\begin{enumerate}
			\item $ \mathfrak w _\s$ is the largest cone in $\s$;
			\item    $   \edge{\s}$ is the largest subalgebra of $\g$ contained  in $\s$;
			\item  for each $X\in \s\cap(-\s)$, we have that $\s$,  $ \mathfrak w _\s$, and $  \edge{\s}$ are invariant under $ e^{\ad_X} $, i.e.,
			\begin{equation}\label{ad_inv0}
			e^{\ad_X}     \s=     \s, \qquad \text{ for all } X \text{ such that } \pm X\in \s,
			\end{equation}
			\begin{equation}\label{ad_inv}
			e^{\ad_X}    \mathfrak w _\s=     \mathfrak w _\s, \qquad \text{ for all } X \text{ such that } \pm X\in \s,
			\end{equation}
			\begin{equation}\label{ad_inv2}
			e^{\ad_X}   \edge{\s}=    \edge{\s}, \qquad \text{ for all } X \text{ such that } \pm X\in \s;
			\end{equation}
			\item  if $\exp(\mathfrak s)$ is closed, then $ \mathfrak w _\s$ is closed and  convex.

		\end{enumerate}
	\end{lemma}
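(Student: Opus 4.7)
The plan is to prove the four items in the order (1), (3), (2), (4): item (3) only needs the conjugation identity in $G$ and feeds directly into the subalgebra assertion of (2), while (4) combines ingredients from the preceding items. Items (1) and the ``largest'' half of (2) are immediate from the definitions. The set $\mathfrak{w}_\s$ is a cone by definition and lies in $\s$ since $\R_+ X \subseteq \s$ forces $X \in \s$; conversely, for any cone $C \subseteq \s$ and any $X \in C$, one has $\R_+ X \subseteq C \subseteq \s$, so $X \in \mathfrak{w}_\s$, giving $C \subseteq \mathfrak{w}_\s$. The same argument applied to a subalgebra $\h \subseteq \s$---which contains the full line $\R X$ for each $X \in \h$---produces $\h \subseteq \edge{\s}$.

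For (3), I invoke the conjugation identity $\exp(X)\exp(Y)\exp(-X) = \exp(e^{\ad_X}Y)$, valid in any Lie group: if $\pm X \in \s$ and $Y \in \s$, then the left-hand side lies in $\exp(\s)$ by the semigroup property, so $e^{\ad_X}Y \in \s$; replacing $X$ by $-X$ gives the reverse inclusion, proving \eqref{ad_inv0}. Because $e^{\ad_X}$ is a linear automorphism of $\g$, it sends cones to cones and lines through the origin to lines through the origin, so \eqref{ad_inv} and \eqref{ad_inv2} follow directly from \eqref{ad_inv0} via the definitions of $\mathfrak{w}_\s$ and $\edge{\s}$.

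The remaining content of (2), that $\edge{\s}$ is a subalgebra, splits into closure under bracket and closure under addition. For bracket closure I use (3): for $X, Y \in \edge{\s}$ the polynomial curve $t \mapsto e^{t\ad_X}Y = Y + t[X,Y] + \tfrac{t^2}{2}[X,[X,Y]] + \cdots$ lies in $\edge{\s}$ for every $t \in \R$, and reading off the coefficient of $t$ (once the vector-subspace property is secured) yields $[X,Y] \in \edge{\s}$. For closure under sums I appeal to the Trotter product formula $\exp(t(X+Y)) = \lim_n (\exp(tX/n)\exp(tY/n))^n$, whose finite approximants lie in $\exp(\s)$ by the semigroup property and whose limit is captured in the closure of $\exp(\s)$, as in \cite{Hilgert_Neeb:book_semigroup}. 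For (4), the set $\mathfrak{w}_\s = \bigcap_{t \geq 0} t^{-1}\s$ is closed because $\s = \exp^{-1}(\exp\s)$ is closed; convexity again follows from Trotter applied to $\la X + (1-\la) Y$ with $X, Y \in \mathfrak{w}_\s$, each factor being in $\exp(\s)$ and the limit captured in $\exp(\s)$ by the closedness hypothesis.

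I expect the main obstacle to be the vector-subspace step in (2): separating closure under addition from the closedness hypothesis of (4) requires either passing through $\Cl(\exp(\s))$ or working inside the Hilgert--Neeb semigroup framework, and this is the subtlety that demands the most care.
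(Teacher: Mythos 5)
Your items (1), (3), and (4) are essentially correct and match the paper's arguments: (1) is definitional, (3) is the conjugation identity $\exp(X)\exp(Y)\exp(-X) = \exp(e^{\ad_X}Y)$ combined with the semigroup property, and (4) is the Trotter formula (your representation $\mathfrak{w}_\s=\bigcap_{t>0}t^{-1}\s$ for closedness is a clean alternative to the paper's maximality argument). The problem is item (2), and you have correctly diagnosed it yourself: your route to the vector-subspace property is through Trotter, whose approximants lie in $\exp(\s)$ but whose limit is only guaranteed to lie in $\Cl(\exp(\s))$. Since item (2) carries \emph{no} closedness hypothesis on $\exp(\s)$ --- that hypothesis appears only in item (4) --- your argument proves $X+Y\in\log\Cl(\exp\s)$, not $X+Y\in\s$. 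This is a genuine gap, not merely a subtlety, and the bracket-closure step inherits it, since ``reading off the coefficient of $t$'' requires the vector-subspace property you have not yet secured.

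The paper bridges this with a qualitatively different, group-theoretic argument that never touches the Trotter formula. Let $\mathfrak{h}=\lie{\edge{\s}}$ be the Lie algebra generated by $\edge{\s}$ and let $\hat S$ be the semigroup generated by $\exp(\edge{\s})$. Because $\edge{\s}$ Lie generates $\mathfrak{h}$, Krener's accessibility theorem \cite[Theorem~8.1]{AS} gives that $\hat S$ has nonempty interior in $\exp(\mathfrak{h})$. Because $\edge{\s}$ is symmetric ($\edge{\s}=-\edge{\s}$ by definition), $\hat S$ is closed under inversion and hence is a \emph{subgroup}. A subgroup with nonempty interior of a topological group is open, and an open subgroup of the connected group $\exp(\mathfrak{h})$ is the whole group. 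Thus $\exp(\mathfrak{h})=\hat S\subseteq\exp(\s)$, and injectivity of $\exp$ gives $\mathfrak{h}\subseteq\s$; since $\mathfrak{h}$ is symmetric, $\mathfrak{h}\subseteq\edge{\s}$, so $\edge{\s}=\mathfrak{h}$ is a subalgebra. This is exactly ``working inside the Hilgert--Neeb semigroup framework'' that you flagged as the alternative; it is what the proof actually requires, because the closure-based route does not close under the stated hypotheses. If you want to salvage your approach you would have to weaken the conclusion of (2) to a statement about $\Cl(\exp(\s))$, which is not what is asserted.
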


	\begin{proof}
		Point (1) is immediate  from the definition.
		Regarding (2), to see that $\edge{\s}$ is a Lie algebra,  let $\mathfrak h$ be the Lie algebra generated by $   \edge{\s}$. Let $\hat S$ be the semigroup generated by $\exp(   \edge{\s})$. Since $   \edge{\s}$ Lie generates $\mathfrak h$, then by \cite[Theorem 8.1]{AS} the set $\hat S$ has nonempty interior  in $\exp(\mathfrak h)$. Since  $ \edge{\s}$ is symmetric, then   $\hat S$ is closed under inversion, hence a group. Being a group with nonempty interior, $\hat S$ is an open subgroup of $\exp(\mathfrak h)$.
		Since $\hat S$ is an open subgroup of the connected group  $\exp(\mathfrak h)$, then 
		$\exp(\mathfrak h)  $ equals $ \hat S$, which is a subset of $\exp(\s)$. Therefore, $\mathfrak h$ is a  subset of $\s$, being $\exp$ injective. Since in addition $ \mathfrak h $ is symmetric, we infer that $\mathfrak h\subseteq  \edge{\s}$, which tells us that $ \edge{\s}$ is a subalgebra of $\g$ contained  in $\s$. It is the largest since, if a Lie algebra $\mathfrak h$ is contained in $\s$, then from $\R \mathfrak h=\mathfrak h$ we deduce that $\mathfrak h \subseteq  \edge{\s}$.
		
		To prove \eqref{ad_inv0}, take $X $ such that $\pm X\in \mathfrak s$, so that for all $Y\in \s$ we have
		\begin{eqnarray*}\label{rem:conj}
			\exp(e^{\ad_X}Y)&=&
			\exp(\Ad_{\exp(X)}Y)\\& =& \exp((C_{\exp(X)})_*Y)  \\&=& C_{\exp(X)}(\exp(Y))\\&=&
			{\exp(X)}\exp(Y)\exp(-X)\in S,
		\end{eqnarray*}
		where we have used that 
		$
		\ad$ is   the differential of $
		\Ad$, that
		$
		\Ad_g$ is the differential of $C_g$, that exp intertwines  this differential with $C_g$ and, finally, that $S$ is a semigroup.
		Hence,  we have proved that 
		$ e^{\ad_X}\mathfrak s =\mathfrak s$.
		Consequently, since the map  $e^{\ad_X}$ is linear, it sends 
		half-lines to half-lines and lines to lines. Thus, we have \eqref{ad_inv} and \eqref{ad_inv2}.
		

		We now prove (4). If $ \exp(\mathfrak s) $ is closed, then also $\mathfrak s$ is closed since $ \exp $ is continuous and injective. Then the closure of $ \mathfrak w _\s$ is a cone in $\mathfrak s$. By maximality of 
		$ \mathfrak w _\s$, we deduce that 	$ \mathfrak w _\s$ is closed.	
		Since $\mathfrak w _\s  $ is a cone, to check that $ \mathfrak w _\s$ is convex
		 it is enough to show that $X+ Y\in \s  $ for all $X, Y\in \mathfrak w_\s $. Indeed, noticing that also $ \R_+X, \R_+ Y \sus \mathfrak w_\s $, this would imply that $ \R_+(X+Y)\sus \s $ 
	 	and so $ X+Y \in \mathfrak w_\s $. To prove that  $X+ Y\in \s  $ for every  $X, Y\in \mathfrak w_\s $, recall the formula, which holds in all Lie groups, 
		\begin{equation}
		\label{sum}
		\exp(X+Y) = \lim_{n\to \infty} \left(\exp\left(\tfrac1n X\right) \exp\left(\tfrac1n Y\right) \right)^n.
		\end{equation}
		Set $S=\exp(\s)$.
		Since  $\R_+ X, \R_+ Y \subseteq \s$, then 
		$\exp(\frac1n X), \exp(\frac1n Y) \in   S$, for all $n\in \N$.
		Consequently, since $  S$ is a semigroup, we have $ \left(\exp(\frac1n X) \exp(\frac1n Y) \right)^n \in   S$.
		Being $  S$ closed by assumption, we get from \eqref{sum} that $\exp(X+Y)  \in   S$. Since $\exp$ is injective, we infer that $X+ Y\in \s $. So the convexity of  $ \mathfrak w _\s$ is proved.
%
%
	\end{proof}
We prove next a useful lemma, which states that if $ \R_+ X\sus \s,$ $ \R Y \sus \s, $ and  $ \R \ad_{Y}^2X \sus \s$ then also $ \R [X,Y]\sus \s$.  Recall the notions of  $ \edge{\s}$ and $ \mathfrak w _\s$, defined in \eqref{inv} and \eqref{wedge}.
	\begin{lemma}
		\label{lemma:new_invariant_directions} Let $ \g $ be a stratified Lie algebra.
		Let $ \s\sus\g $ be a subset $\exp(\mathfrak s)$ is a closed semigroup.  
  If 	$X \in \mathfrak w _\s $ and $Y\in \edge{\s}$ are such that $\ad_{Y}^2X \in \edge{\s}$, then $ [X,Y]\in \edge\s$.  
 \end{lemma}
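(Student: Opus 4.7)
The plan is to exploit the invariance of $\mathfrak{w}_\s$ under $e^{\ad_{tY}}$ for $t\in\R$ (Lemma~\ref{lem:CH}(3), applicable since $Y\in\edge{\s}$ gives $\pm tY\in\s$), and then use the subalgebra property of $\edge{\s}$ together with the convex-cone structure of $\mathfrak{w}_\s$ to isolate the linear-in-$t$ coefficient $[Y,X]$.

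First I would observe that by Lemma~\ref{lem:CH}(2) the edge $\edge{\s}$ is a subalgebra of $\g$. Since $Y,\ad_Y^2 X\in \edge{\s}$, an induction on $k$ then yields $\ad_Y^k X \in \edge{\s}$ for every $k\geq 2$. Thus, writing the finite Taylor expansion (finite because $\g$ is nilpotent)
\begin{equation*}
e^{\ad_{tY}} X \;=\; X + t[Y,X] + R(t),\qquad R(t)\coloneqq \sum_{k\geq 2}\frac{t^k}{k!}\ad_Y^k X,
\end{equation*}
we have $R(t)\in\edge{\s}$ for every $t\in\R$, because $\edge{\s}$ is in particular a vector subspace.

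Next, by Lemma~\ref{lem:CH}(3), $e^{\ad_{tY}}\mathfrak{w}_\s = \mathfrak{w}_\s$, so $e^{\ad_{tY}} X\in \mathfrak{w}_\s$. Also $-R(t)\in \edge{\s}\subseteq \mathfrak{w}_\s$. Since $\exp(\s)$ is closed, Lemma~\ref{lem:CH}(4) guarantees that $\mathfrak{w}_\s$ is a closed convex cone, hence closed under addition. Therefore
\begin{equation*}
X + t[Y,X] \;=\; e^{\ad_{tY}} X + (-R(t)) \;\in\; \mathfrak{w}_\s\qquad\text{for every } t\in\R.
\end{equation*}

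Finally, to extract $[Y,X]$, for $t\neq 0$ I would rescale by $1/|t|$ (allowed since $\mathfrak{w}_\s$ is a cone) to get $\tfrac{X}{|t|}+\operatorname{sign}(t)[Y,X]\in \mathfrak{w}_\s$, and let $|t|\to\infty$. Closedness of $\mathfrak{w}_\s$ gives $\operatorname{sign}(t)[Y,X]\in \mathfrak{w}_\s$ for both signs, so $\R[Y,X]\subseteq \mathfrak{w}_\s\subseteq \s$, which by \eqref{inv} means $[X,Y]=-[Y,X]\in \edge{\s}$. There is no real obstacle here beyond the observation in the first paragraph that inductively $\ad_Y^k X \in \edge{\s}$ for all $k\geq 2$; once that is in hand, the convex-cone closure argument is formal.
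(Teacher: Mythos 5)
Your proof is correct and follows essentially the same route as the paper: show $\ad_Y^k X \in \edge{\s}$ for $k\geq 2$ via the subalgebra property, expand $e^{\ad_{tY}}X$, subtract the tail to place $X + t[Y,X]$ in $\mathfrak{w}_\s$ by convexity, rescale, and let $|t|\to\infty$ using that $\mathfrak{w}_\s$ is a closed cone. The only presentational difference is that you spell out the step $-R(t)\in\edge{\s}\subseteq\mathfrak{w}_\s$ and invoke closure under addition explicitly, which the paper compresses into a single display.
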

	
	\begin{proof} 
		One the one hand, since $ \edge{\s}$ is a Lie algebra by Lemma~\ref{lem:CH}.(2) we have  $\ad_{Y}^kX \in  \edge{\s}$, for all $k\geq 2$.
		On the other hand, from \eqref{ad_inv} we have that 
		$e^{\ad_{tY}}X\in  \mathfrak w _{ \s}$, for all $t\in \R$. Hence, since $ \mathfrak w_\s$ is convex by Lemma~\ref{lem:CH}.(4), for all $t\in \R$ we have 
		$$  	X   + t [Y,X]    = e^{\ad_{tY}}X -  \sum_{k\geq 2}\dfrac{t^k}{k!}\ad_{Y}^{k} (X)  \in \mathfrak w _{ \s}.
		$$
		Hence $\frac{1}{|t|}(X   +  t [Y,X]  )   \in \mathfrak w _{ \s} $, for all $t\in \R$. Therefore, taking $t$ to $\pm\infty$ and using that $ \mathfrak w_\s $ is closed and convex by Lemma~\ref{lem:CH}.(4), we get  $ [Y,X]     \in  \edge{\s} $. 
	\end{proof}
	
	In the rest of the paper, we focus on semigroups generated by horizontal half-spaces in stratified Lie algebras.
	For every horizontal half-space $W$, see \eqref{halfspace}, in a stratified Lie algebra $  \g$, 
	we denote by $S_W $ the semigroup generated by $\exp(W)$ in $\exp(\g)$, see \eqref{def:SW}, and by 
	$\mathfrak s_W\subset \g$ the set such that
	$\exp(\mathfrak s_W)=S_W,$ i.e.,
	\begin{equation}\label{s_W}
	\mathfrak s_W\coloneqq\log(S_W).
	\end{equation}

If $\s \coloneqq \s_W$, we stress the following two immediate facts:
\begin{equation}
	  \text{ for every } X \in V_1 ,   \text{ either } X \in \mathfrak w_\s  \text{  or } -X \in \mathfrak w _\s ;
	  \end{equation}
	  	  \begin{equation}
  \partial W = \edge{\s}\cap V_1 .
	  \end{equation}
The semigeneration condition stated in the introduction can equivalently be defined as follows: A   set $W$ in $\g$ is  {\em   semigenerating} if 
	\begin{equation}\label{ample:def}
	[\g,\g]\subseteq \Cl(\mathfrak s_W ),
	\end{equation}
 and we say that   $\g$ is  {\em   semigenerated} if every horizontal half-space $W$ in $\g$ is      semigenerating.

\begin{remark}For  a  horizontal half-space $ W \subseteq \g$ and for $\s $ equal to $ \s_W $ or $ \Cl(\s_W)$,  we have that
$   \edge{\s}$ is a homogeneous subalgebra of $\g$ contained  in $\s$.
Indeed, in Lemma~\ref{lem:CH}.(2) we already proved everything except the homogeneity. In such a case, for all $\lambda>0$ we have that 
$\delta_\lambda W=W$ and, hence, $\delta_\lambda \s=\s$. Thus we infer that 
$\delta_\lambda \R X\subseteq \s$ if and only if
$   \R X\subseteq \s$. Therefore $   \edge{\s}$ is homogeneous.  
\end{remark}

	\begin{lemma}\label{lemma:ample_directions}
		Let $ \g $ be a stratified Lie algebra and $ W \subseteq \g$ a  horizontal half-space. 
		Then the set $\s = \Cl(\s_W)$  has the following two properties:
			\begin{equation} \label{lemma:universal_invariant}
			X,Y \in V_1 \text{ with } \ad_X^2 Y = \ad_Y^2 X = 0 \text{ implies }
			 [X,Y] \in \edge \s;
			\end{equation}
			\begin{equation}
			\label{l:V2:cap:Z(g):univ:invariant} V_2 \cap \Center(\g) \sus \edge \s.
			\end{equation}
	\end{lemma}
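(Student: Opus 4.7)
For \eqref{lemma:universal_invariant}, the plan is to reduce to Lemma~\ref{lemma:new_invariant_directions} by arranging one of $X, Y$ to lie in $\partial W \subseteq \edge{\s}$. Let $\lambda \in V_1^*$ cut out $W = \lambda^{-1}([0, +\infty))$, and split into cases according to $\lambda(X), \lambda(Y)$. If both vanish, then $X, Y \in \partial W \subseteq \edge{\s}$ and $[X, Y] \in \edge{\s}$ by Lemma~\ref{lem:CH}(2), since $\edge{\s}$ is a Lie subalgebra. If $\lambda(Y) = 0 \ne \lambda(X)$ (the symmetric case being analogous), then $Y \in \edge{\s}$, and after possibly replacing $X$ by $-X$ to have $X \in \mathfrak w_\s$, Lemma~\ref{lemma:new_invariant_directions} applied with $\ad_Y^2 X = 0 \in \edge{\s}$ yields $[X, Y] \in \edge{\s}$. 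In the generic case $\lambda(X), \lambda(Y) \ne 0$, I set $X' := X - \tfrac{\lambda(X)}{\lambda(Y)} Y$; then $X' \in \partial W$ and $[X', Y] = [X, Y]$, while a direct calculation using $\ad_Y Y = 0$, $\ad_X \ad_Y Y = 0$, and $\ad_Y \ad_X Y = -\ad_Y^2 X = 0$ confirms that the pair $(X', Y)$ still satisfies $\ad_{X'}^2 Y = \ad_Y^2 X' = 0$, reducing to the previous case.

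For \eqref{l:V2:cap:Z(g):univ:invariant}, the key observation is that whenever $X, Y \in V_1$ satisfy $[X, Y] \in \Center(\g)$, the centrality automatically forces $\ad_X^2 Y = [X, [X, Y]] = 0$ and $\ad_Y^2 X = -[Y, [X, Y]] = 0$, so \eqref{lemma:universal_invariant} applies to give $[X, Y] \in \edge{\s}$. To conclude \eqref{l:V2:cap:Z(g):univ:invariant}, it then suffices to express an arbitrary $Z \in V_2 \cap \Center(\g)$ as an $\R$-linear combination of such single central brackets $[X_j, Y_j]$; by linearity of $\edge{\s}$ as a subspace, $Z \in \edge{\s}$ follows.

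The main obstacle is precisely this decomposition of $V_2 \cap \Center(\g)$ into single central brackets. In step $2$ it is immediate, since $V_2 = V_2 \cap \Center(\g)$ and every bracket $[X, Y]$ of $V_1$ elements is automatically central. In step $\ge 3$, a generic decomposition $Z = \sum_i [X_i, Y_i]$ need not have central summands, and one must re-express $Z$ as a sum of single central brackets. I would attempt this by fixing a basis of $V_1$ adapted to $\partial W$ and leveraging the Jacobi identity together with the relations $[A, Z] = 0$ for $A \in V_1$ to rearrange the decomposition; the stratified structure of $\g$ should ensure that enough centrality can be packed into each summand.
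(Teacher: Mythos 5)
Your argument for \eqref{lemma:universal_invariant} is correct and essentially reproduces the paper's proof: replace one of the two vectors by its translate in $\partial W$ (possible because $\partial W$ has codimension $1$ in $V_1$), check that the hypotheses $\ad^2_{(\cdot)}(\cdot)=0$ are preserved, and apply Lemma~\ref{lemma:new_invariant_directions}. The bookkeeping differs slightly (you subtract a multiple of $Y$ from $X$; the paper subtracts a multiple of $X$ from $Y$), but it is the same argument.

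Your argument for \eqref{l:V2:cap:Z(g):univ:invariant} has a genuine gap, and the gap cannot be filled by the route you propose. You reduce the claim to writing every $Z\in V_2\cap \Center(\g)$ as a linear combination $\sum_j[X_j,Y_j]$ with each individual summand $[X_j,Y_j]$ central, and you flag this decomposition as the remaining obstacle. That obstacle is real: the span of the central brackets $[X,Y]$ with $X,Y\in V_1$ can be a \emph{proper} subspace of $V_2\cap\Center(\g)$. Concretely, take $\dim V_1=4$ and $V_2=\Lambda^2V_1\cong\R^6$ (free step~$2$); the element $\omega:=e_1\wedge e_2+e_3\wedge e_4$ satisfies $\omega\wedge\omega\neq 0$, so it is indecomposable and cannot equal any single bracket $[X,Y]$. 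One can then choose the brackets $V_1\times V_2\to V_3$ (subject to the Jacobi identity, which only constrains the component along $\Lambda^3 V_1$) so that $V_2\cap\Center(\g)=\R\,\omega$ exactly. In such an algebra every central bracket $[X,Y]$ lies in $\R\,\omega$ and is decomposable, hence is $0$; so $\omega$ is not a sum of central brackets and your decomposition does not exist.

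The paper's proof avoids this issue by \emph{not} reducing to \eqref{lemma:universal_invariant}. Fix a basis $\{X_1,\dots,X_m\}$ of $V_1$ with $X_1\in W\sus\mathfrak w_\s$ and $X_2,\dots,X_m\in\partial W\sus\edge{\s}$, and write $Z=[Y,X_1]+\tilde Y$ with $Y=\sum_{i\geq 2}a_iX_i\in\partial W$ and $\tilde Y$ a linear combination of brackets $[X_i,X_j]$, $i,j\geq 2$, so $Y,\tilde Y\in\edge{\s}$. The crucial point is that one does not need $\ad_Y^2X_1$ to vanish: centrality of $Z$ gives $0=[Y,Z]=\ad_Y^2X_1+[Y,\tilde Y]$, hence $\ad_Y^2X_1=-[Y,\tilde Y]\in\edge{\s}$ while being possibly nonzero. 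This is exactly the hypothesis of Lemma~\ref{lemma:new_invariant_directions} for the pair $(X_1,Y)$, giving $[Y,X_1]\in\edge{\s}$ and thus $Z\in\edge{\s}$. Your observation that centrality of $[X,Y]$ forces $\ad_X^2Y=\ad_Y^2X=0$ is fine, but insisting on a decomposition into central brackets demands more from the algebra than it provides; the robust move is to feed Lemma~\ref{lemma:new_invariant_directions} a nonzero element of $\edge{\s}$.
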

	
	\begin{proof} Regarding \eqref{lemma:universal_invariant},
		we have, up to changing signs, that
		$X,Y\in W$. Moreover, 
		since $\partial W$ is a codimension 1 subspace of $ V_1 $, we have that,
		up to possibly swapping $X$ with $Y$, there exists some $ a\in \R $ for which 
		$Z\coloneqq  Y - aX  \in \partial W$. 
		Thus, we have
		$Z \in \partial W\subseteq \edge{\s}$  and $X\in W\subseteq\mathfrak w_\s$.
		Moreover, by the assumptions on $X$ and $Y$, we have that
		\[
		\ad_{Z}^2 X = [Y-aX,[Y,X]] = \ad_Y^2 X + a\, \ad_X^2 Y = 0  \in   \edge{\s}.
		\]
		By Lemma~\ref{lemma:new_invariant_directions}, we obtain $ \edge{\s}\ni [X,Z] = [X,Y] $.

		Regarding \eqref{l:V2:cap:Z(g):univ:invariant},
		we choose $ \{X_1,\ldots,X_m\} $ to be a basis of $ V_1 $ such that $ X_1 \in W\subseteq \mathfrak  w _\s $ and $ X_2,\dots,X_m \in \partial W\subseteq\edge{\s}$. Take $ Z \in V_2\cap \Center(\g) $ and express it, for some $ a_i, b_{ij}\in \R $, as
	\begin{equation}\label{decompongo_Z}
	Z = \sum_{i\geq 2}a_i[X_i,X_1] + \sum_{i,j\geq 2}b_{ij}[X_i,X_j] = [Y,X_1]+\tilde Y,
		\end{equation}
	where 
	\[ 
	Y \coloneqq  \sum_{i\geq 2}a_iX_i  \quad \text{and}\quad \tilde Y \coloneqq \sum_{i,j\geq 2}b_{ij}[X_i,X_j]  .
	\]
	Since $ \edge \s $ is a Lie algebra by Lemma~\ref{lem:CH}.2, we have that the elements $ Y, \tilde Y, [Y,\tilde Y] $ belong to $ \edge \s $. Since $ Z \in \Center (\g) $, we get also
	\[
	0 = [Y,Z] = \ad_{Y}^2X_1 + [Y,\tilde Y],
	\]
	which implies that $ \ad_{Y}^2X_1 \in \edge \s $. 
	Since $ X_1 \in  \mathfrak  w _\s $, $ Y\in \edge \s $, and  $ \ad_{Y}^2X_1 \in \edge \s $
	 Lemma~\ref{lemma:new_invariant_directions} tells us that $ [Y,X_1]\in \edge \s $.
	 Going back to \eqref{decompongo_Z}, we finally infer that $ Z \in \edge \s $, again because $ \edge \s $ is a Lie algebra by Lemma~\ref{lem:CH}.2.
	\end{proof}
	
 	For the next lemma, recall that $\pi_{\mathfrak i}:\g \to \g/\mathfrak i $ is the quotient map modulo an ideal $\mathfrak i$. We also recall the basic fact that in the Lie algebra $\g$ of a simply connected nilpotent Lie group $G$, a subset $\mathfrak i \subseteq \g$ is an ideal if and only if $N\coloneqq\exp(\mathfrak i)$ is a normal Lie subgroup of $G$; in this case, the quotient $ \g/\mathfrak i $ is canonically isomorphic to the Lie algebra of $ G/N $ and we have the following commutative diagram:
		\begin{center}
			\begin{tikzcd} 
			\g \arrow[d, "\exp" ']\arrow[r, "\pi_{\mathfrak i}"] & \g/\mathfrak i \arrow[d, "\exp"]\\
			G\arrow[r, "\pi_{N}"]& G/N .
		\end{tikzcd}
			
	\end{center}
Moreover, if $ \g $ is stratified, then $ \g/\mathfrak i $ canonically admits a stratification if and only if $ \mathfrak i $ is homogeneous.

We stress   that we have the following fact for each subset $W\subseteq \g$ of a  Lie algebra $ \g $:
\begin{equation}\label{quotient_semigr}
\pi_{ \mathfrak i}(\s_W)   =  \s_{\pi_{ \mathfrak i} (W )}.
\end{equation}
Indeed, setting $N\coloneqq\exp( \mathfrak i)$ and denoting by $S(A)$ the semigroup generated by $A$,  we need to show that $\pi_{ N}(S(\exp(W)))   =  S({\pi_{N}\exp (W )}).$
In fact, on the one hand, since the homomorphic image of a semigroup is a semigroup, we have that
$\pi_{ N}(S(\exp(W)))$
is a semigroup containing $ \pi_{N}(\exp(W) )$, so $ S({\pi_{N}\exp (W )}) \sus \pi_{ N}(S(\exp(W)))$.
 On the other hand, the set 
 $\pi_{ N}(S(\exp(W)))=   S(\exp(W)) N $ %
 is contained in the semigroup generated by 
 $\exp(   W)N = \pi_{N} (\exp( W )  ) $, {i.e.}, we have $ \pi_{ N}(S(\exp(W)))\sus  S({\pi_{N}\exp (W )})$.

\begin{lemma}\label{lemma:quotient_of_cones_by_invariants}
	Let $ \mathfrak i $ be a homogeneous ideal of a stratified Lie algebra $ \g $ and let $ W \subseteq\g $.
	\begin{itemize} 
	\item[(i)]
	If   $ W $ is semigenerating, then $ \pi_{ \mathfrak i} (W ) $ is semigenerating.
	\item[(ii)]
	If $ \mathfrak i\sus \Cl(\s_W) $ and $ \pi_{ \mathfrak i} (W ) $ is semigenerating, then $ W $ is semigenerating.
		\end{itemize}
\end{lemma}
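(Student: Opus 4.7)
The backbone of the argument is the identity $\pi_{\mathfrak i}(\s_W)=\s_{\pi_{\mathfrak i}(W)}$ provided by \eqref{quotient_semigr}, together with the fact that $\pi_{\mathfrak i}$ is a surjective Lie algebra homomorphism, so $\pi_{\mathfrak i}([\g,\g])=[\g/\mathfrak i,\g/\mathfrak i]$. Modulo these two equalities, the semigeneration condition \eqref{ample:def} asks only that a commutator subspace be contained in the closure of a semigroup set, and both parts of the statement reduce to how $\pi_{\mathfrak i}$ interacts with such closures.

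Part (i) I would handle by a one-line inclusion chase. From $[\g,\g]\subseteq \Cl(\s_W)$ and continuity of the linear map $\pi_{\mathfrak i}$ one obtains
\[
[\g/\mathfrak i,\g/\mathfrak i]=\pi_{\mathfrak i}([\g,\g])\subseteq \pi_{\mathfrak i}(\Cl(\s_W))\subseteq \Cl(\pi_{\mathfrak i}(\s_W))=\Cl(\s_{\pi_{\mathfrak i}(W)}),
\]
which is exactly the semigeneration of $\pi_{\mathfrak i}(W)$.

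Part (ii) is the substance, and the plan is to lift convergence rather than project it. First I would note that, since $\mathfrak i$ is a linear subspace with $\mathfrak i=-\mathfrak i$ and $\mathfrak i\subseteq \Cl(\s_W)$, every $X\in \mathfrak i$ satisfies $\R X\subseteq \mathfrak i\subseteq \Cl(\s_W)$, so in fact $\mathfrak i\subseteq \edge{\Cl(\s_W)}$ and in particular $N\coloneqq \exp(\mathfrak i)\subseteq \Cl(S_W)$. Since $\Cl(S_W)$ is a semigroup, this yields the absorption $\Cl(S_W)\cdot N\subseteq \Cl(S_W)$. Next, for $X\in [\g,\g]$, the hypothesis on $\pi_{\mathfrak i}(W)$ together with \eqref{quotient_semigr} supplies a sequence $Y_n\in \s_W$ with $\pi_{\mathfrak i}(Y_n)\to \pi_{\mathfrak i}(X)$, which via $\exp$ becomes $\exp(Y_n)N\to \exp(X)N$ in $G/N$. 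I would then lift this convergence to $G$: choosing a linear complement $\mathfrak c$ of $\mathfrak i$ in $\g$, the map $\mathfrak c\times \mathfrak i\to G$, $(c,i)\mapsto \exp(c)\exp(i)$, is a global diffeomorphism of the simply connected nilpotent Lie group $G$, through which one extracts $n_n\in N$ with $\exp(Y_n)\cdot n_n\to \exp(X)$ in $G$. Each product $\exp(Y_n)\cdot n_n$ lies in $S_W\cdot N\subseteq \Cl(S_W)$, so the limit $\exp(X)$ belongs to $\Cl(S_W)$, i.e., $X\in \Cl(\s_W)$, as desired.

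The only nontrivial step is the lifting of quotient convergence, and this is exactly where the hypothesis $\mathfrak i\subseteq \Cl(\s_W)$ does its real work: it turns $N$ into a subset of the closed semigroup $\Cl(S_W)$ that absorbs it multiplicatively from the right.
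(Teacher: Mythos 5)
Your argument is correct, and part (i) is word-for-word the paper's computation. Part (ii) follows the same strategic skeleton as the paper — derive $N=\exp(\mathfrak i)\subseteq\Cl(S_W)$ from $\mathfrak i\subseteq\Cl(\s_W)$, use the semigroup property to get the absorption $\Cl(S_W)\cdot N\subseteq\Cl(S_W)$, then transport the semigeneration of $\pi_{\mathfrak i}(W)$ back upstairs — but you execute the final step differently. The paper observes that $\pi_N$ is an open map, so preimage commutes with closure, giving the chain $\pi_{N}^{-1}\Cl(S_{\pi_{\mathfrak i}(W)})=\Cl(S_W\cdot N)=\Cl(S_W)$, which after $\log$ and intersection with $[\g,\g]$ yields the claim in one stroke. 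You instead lift a convergent sequence by hand: pick a linear complement $\mathfrak c$ of $\mathfrak i$, use the global diffeomorphism $\mathfrak c\times\mathfrak i\to G$, $(c,i)\mapsto\exp(c)\exp(i)$ (valid because $\mathfrak i$ is a subalgebra and $G$ is simply connected nilpotent), and produce $n_n\in N$ with $\exp(Y_n)n_n\to\exp(X)$; since $\exp(Y_n)n_n\in S_W\cdot N\subseteq\Cl(S_W)$, closedness gives $\exp(X)\in\Cl(S_W)$. The paper's route is shorter but leans on the general topological fact about open maps; yours is entirely self-contained and makes explicit where the hypothesis $\mathfrak i\subseteq\Cl(\s_W)$ does the work, at the modest cost of invoking the product diffeomorphism for nilpotent groups. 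Both are valid and convey the same idea.
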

\begin{proof}

	Assume first that $ W $ is semigenerating.
	Then from \eqref{quotient_semigr} we obtain that $\pi_{ \mathfrak i} (W )$ is semigenerating by the following calculation:
	\[
	[ \pi_{ \mathfrak i} (\g) ,\pi_{ \mathfrak i}(\g) ] =\pi_{ \mathfrak i} ( [\g,\g]) 
	   \stackrel{\eqref{ample:def}}{\sus}
	\pi_{ \mathfrak i}(\Cl(  \s_{ W }))\sus
	   \Cl(\pi_{ \mathfrak i}(\s_W)  ) \stackrel{\eqref{quotient_semigr}}{=}\Cl(  \s_{\pi_{ \mathfrak i} (W )})
 .
	\]
	
	Suppose then that  $ \pi_{ \mathfrak i} (W ) $ is semigenerating and that $ \mathfrak i \sus \Cl(\s_W) $.
	Then we also have the containment
	$
 N\coloneqq\exp(\mathfrak  i)\sus \Cl(S_W) 
 $.  Since $ \Cl(S_W) $ is a semigroup, we have 
		\begin{equation}\label{intermezzo}
		 \Cl(S_W)\cdot N=\Cl(S_W) . 	\end{equation}
		  Therefore  from \eqref{quotient_semigr}  we get  
	\begin{equation}\label{intermezzo2}
	S_{\pi_{\mathfrak i}(W) } = 	
	\exp(\s_{\pi_{\mathfrak i} (W)})
	\stackrel{\eqref{quotient_semigr}}{=}	
	\exp(\pi_{ \mathfrak i}(\s_W) )=	
	\pi_{N}(\exp(\s_W)) =	
	\pi_{N}(S_W)  .
		\end{equation}
	Taking the closure and the preimage under $\pi_{N}$,  from the fact that $\pi_N$ is an open map (and hence $\pi_{N}^{-1}$ and $\Cl$ commute) and from  \eqref{intermezzo}, we get that
	\begin{equation}\label{eq:(A)}
	\pi_{N}^{-1}\Cl(S_{\pi_{\mathfrak i}(W) }) 
	\stackrel{\eqref{intermezzo2}}{=} \pi_{N}^{-1}\Cl(\pi_{N}(S_W))
	=	
	\Cl(S_{W}\cdot N) = \Cl(S_W)\cdot N \stackrel{\eqref{intermezzo}}{=}  \Cl(S_W).
	\end{equation}
	 Consequently, taking the logarithm,
	 \begin{equation}\label{eq:(B)}
	 	\pi_{\mathfrak i}^{-1}  \Cl(\s_{\pi_{\mathfrak i}(W)}) =
	 \log (  \pi_{N}^{-1}\Cl(S_{\pi_{\mathfrak i}(W) })   )
	 \stackrel{\eqref{eq:(A)}}{=} \log  \Cl(S_W) = \Cl(\s_W).
	 \end{equation}
	Hence, since $ \pi_{ \mathfrak i} (W ) $ is semigenerating, we infer  $$ 	[\g,\g] \sus [\g,\g]+\mathfrak i 
	= \pi_{\mathfrak i}^{-1}  [ \g/ \mathfrak i , \g/ \mathfrak i ] \sus
\pi_{\mathfrak i}^{-1}  \Cl(\s_{\pi_{\mathfrak i}(W)})
  \stackrel{\eqref{eq:(B)}}{=} \Cl(\s_W), $$ proving that $ W $ is semigenerating.
\end{proof}

 	We keep reminding that a quotient algebra $ \g/\mathfrak i $ of a Carnot algebra $ \g $ is Carnot if and only if the ideal $ \mathfrak i $ is homogeneous. In such a case, we say that $ \g/\mathfrak i $  is a {\em Carnot quotient} of $\g$.
	
	\begin{proposition} 
	\label{prop:quotients_product_ample}
	Carnot quotients and products of semigenerated algebras are semigenerated. 
	\end{proposition}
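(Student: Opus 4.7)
The plan is to handle the quotient case first via a short lifting argument that reduces matters to Lemma~\ref{lemma:quotient_of_cones_by_invariants}(i), and then to dispatch products by directly controlling $\s_W$ inside $\g_1 \times \g_2$.

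For the quotient case, I would let $\mathfrak i$ be a homogeneous ideal of the semigenerated algebra $\g$ and take an arbitrary horizontal half-space $W' \sus V_1(\g/\mathfrak i)$ defined by some non-zero $\lambda' \in V_1(\g/\mathfrak i)^*$. Since $\mathfrak i$ is homogeneous, the projection $\pi_{\mathfrak i}$ sends $V_1(\g)$ onto $V_1(\g/\mathfrak i)$, so pulling back produces a non-zero functional $\lambda := \lambda' \circ \pi_{\mathfrak i}|_{V_1(\g)} \in V_1(\g)^*$. Setting $W := \lambda^{-1}([0,+\infty))$ then yields a horizontal half-space of $\g$ with $\pi_{\mathfrak i}(W) = W'$. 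By hypothesis $W$ is semigenerating in $\g$, and Lemma~\ref{lemma:quotient_of_cones_by_invariants}(i) instantly transfers semigeneration to $W'$. Since $W'$ was arbitrary, $\g/\mathfrak i$ is semigenerated.

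For the product case I would take $\g := \g_1 \times \g_2$ with both factors semigenerated, and a horizontal half-space $W \sus V_1(\g) = V_1(\g_1) \oplus V_1(\g_2)$ defined by $\lambda = \lambda_1 + \lambda_2$ with $\lambda_i \in V_1(\g_i)^*$, and then split into cases according to the vanishing of the $\lambda_i$'s. If $\lambda_1 = 0$ (the opposite case being symmetric), then $V_1(\g_1) \sus \partial W \sus \edge{\s_W}$, so Lemma~\ref{lem:CH}(2) forces $\edge{\s_W}$ to contain the whole subalgebra $\g_1$; meanwhile the horizontal half-space $W_2 := \lambda_2^{-1}([0,+\infty))$ of $\g_2$ embeds into $W$ via the inclusion $\g_2 \hookrightarrow \g$, so semigeneration of $\g_2$ gives $[\g_2,\g_2] \sus \Cl(\s_{W_2}) \sus \Cl(\s_W)$, and combined with $[\g_1,\g_1] \sus \edge{\s_W}$ this yields $[\g,\g] = [\g_1,\g_1] \oplus [\g_2,\g_2] \sus \Cl(\s_W)$. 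If both $\lambda_i$ are non-zero, the sets $W_i := \lambda_i^{-1}([0,+\infty))$ are horizontal half-spaces of the respective factors and both embed into $W$; since $[\g_1,\g_2] = 0$, the Baker--Campbell--Hausdorff formula collapses to ordinary addition between the two factors, so $\s_W \supseteq \s_{W_1} + \s_{W_2}$, and passing to closures while invoking semigeneration of each factor again gives $[\g,\g] \sus \Cl(\s_W)$.

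I do not foresee any serious obstacle. The only mildly delicate step is in the product argument, where one must record that the stratification of the direct product satisfies $V_1(\g) = V_1(\g_1) \oplus V_1(\g_2)$ and that $[\g_1,\g_2] = 0$, so that the semigroup $\s_W$ genuinely contains the Minkowski sum $\s_{W_1} + \s_{W_2}$ rather than merely a sequence of ordered semigroup products of their images.
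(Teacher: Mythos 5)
Your quotient argument matches the paper's: you spell out the lifting of the defining functional $\lambda'$ that shows every horizontal half-space of $\g/\mathfrak i$ has the form $\pi_{\mathfrak i}(W)$ for a horizontal half-space $W$ of $\g$, and then Lemma~\ref{lemma:quotient_of_cones_by_invariants}(i) finishes. Your product argument takes a mildly different path. The paper sets $W_i := W\cap V_1(\g_i)$, allows this to be all of $V_1(\g_i)$ when $\lambda_i=0$, and pushes both commutator ideals into $\pedge{\Cl(\s_W)}$, which by Lemma~\ref{lem:CH}(2) is a subalgebra and hence a \emph{vector space}; the direct-sum containment is then automatic. You instead split on whether the $\lambda_i$ vanish, and in the nondegenerate case get a Minkowski-sum containment $\s_{W_1}+\s_{W_2}\sus\s_W$ from BCH collapsing on the commuting factors, which is fine.

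The one gap is in your degenerate case $\lambda_1=0$: there you have $[\g_1,\g_1]\sus\edge{\s_W}\sus\Cl(\s_W)$ and $[\g_2,\g_2]\sus\Cl(\s_W)$, and then assert that $[\g_1,\g_1]\oplus[\g_2,\g_2]\sus\Cl(\s_W)$. But $\Cl(\s_W)$ is a priori only a (logarithm of a closed) semigroup, not a vector space, so closure under addition is not automatic. You need the same observation you deploy in the other case: for $x\in[\g_1,\g_1]$, $y\in[\g_2,\g_2]$ one has $\exp(x)\exp(y)=\exp(x+y)$ since $[\g_1,\g_2]=0$, and $\Cl(S_W)$ is a semigroup, so $x+y\in\Cl(\s_W)$. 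Alternatively, and more in line with the paper, note that $[\g_2,\g_2]$, being a subalgebra contained in $\Cl(\s_W)$, sits in $\pedge{\Cl(\s_W)}$ as well, and then conclude inside that vector space. It is a small but genuine omission; the idea is right and the fix is exactly the tool you already use one sentence later.
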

	\begin{proof} Consider a quotient algebra $\g/ \mathfrak i $ of a semigenerated Carnot algebra $\g$ by a homogeneous ideal $ \mathfrak i $. Then, by Lemma~\ref{lemma:quotient_of_cones_by_invariants}.i, the Carnot algebra  $\g/ \mathfrak i $ is semigenerated, since every horizontal half-space in $\g/ \mathfrak i $ is of the form $ \pi_{ \mathfrak i} (W ) $ for some horizontal half-space  $ W \subseteq\g $.
	
	Regarding products, let
	 $\g$ be a Carnot algebra that is the direct product 
	 $\g=\g_1\times \g_2  $ of two of its Carnot subalgebras.
	   Assume that
	 $\g_1$ and $\g_2$ are semigenerated. Let 
	$ W \subset \g   $ be a horizontal half-space.
Then for each $i=1,2$ we have that the set $ W_i\coloneqq W \cap V_1(\g_i)$ is a horizontal half-space in $ V_1(\g_i)$, or possibly the whole of $ V_1(\g_i)$. Since each $ \g_i $ is semigenerated, 
$[\g_i,\g_i] \subseteq \bar  \s_{W_i}$ and hence $[\g_i,\g_i] \subseteq \mathfrak{e}({\bar  \s_{W_i}})$ as $  \mathfrak{e}({\bar  \s_{W_i}}) $ is the largest subalgebra of $ \bar  \s_{W_i} $ by Lemma \ref{lem:CH}.2. Consequently,
$$ [\g_1,\g_1]\cup [\g_2 ,\g_2]   \subseteq   \mathfrak{e}({\bar \s_{W_1} })  \cup  \mathfrak{e}({\bar  \s_{W_2} })   \subseteq  \mathfrak{e}({ \bar  \s_{W}  }) .$$
As $  \mathfrak{e}({\bar  \s_{W} })$ is a vector space,  we have that also
\[
[\g ,\g ] = [\g_1,\g_1]\times [\g_2 ,\g_2] = \spann{ [\g_1,\g_1]\cup [\g_2 ,\g_2] } \sus \bar  \s_{W}.
\]
Hence we infer that $W$ is semigenerating.
	\end{proof}
 
\begin{remark}\label{rmk:filiforms_engel_quots}
	As a direct consequence of Proposition~\ref{prop:quotients_product_ample}, one observes that if a Carnot algebra has a non-semigenerated Carnot quotient, then the algebra cannot be semigenerated. In particular, we point out that every rank-2 Carnot algebra of step at least 3 is not semigenerated, since it has the Engel algebra as a quotient. Indeed, for every such algebra $ \g $,  
	we have that $ \g/\g^{(4)} $ with $ \g^{(4)}  = V_4 \oplus \dots \oplus V_s$ is a rank-2 Lie algebra of step exactly 3, i.e., either the Engel algebra or the free Lie algebra of rank 2 and step 3. Since the Engel algebra is a quotient of the free Lie algebra, the claim follows.
	Regarding the fact that the Engel algebra is not semigenerated, we refer to Section~\ref{def:nthEngel} and specifically to Proposition~\ref{prop:Engels:non-CNP}.
\end{remark}

	In the next proposition we verify that three conditions for a stratified Lie algebra are equivalent. In the rest of the paper, we shall call {\em trimmed} every such Lie algebra. 
	\begin{proposition}[Equivalent conditions for the definition of trimmed algebra]\label{lemma:minimal_TFAE}
		For a stratified Lie algebra  $ \g $  the  following are equivalent:
		\begin{itemize} 
			\item[(a)]  every proper quotient of $ \g $ has lower step;
			\item[(b)] $ V_s \sus \mathfrak i $ for every nontrivial ideal $ \mathfrak i $ of $  \g $, where $ s $ is the step of $g$;
			\item[(c)] $ \dim \Center(\g) = 1 $.
		\end{itemize}
	\end{proposition}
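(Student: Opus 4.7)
My plan is to prove the cyclic chain of implications (a) $\Leftrightarrow$ (b) $\Leftrightarrow$ (c), exploiting two basic facts: that the last layer $V_s$ coincides with the last term $\g^{(s)}$ of the lower central series (hence is central, being killed by $V_1$), and that the center is homogeneous as recalled in \eqref{center:homog}.

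The equivalence (a) $\Leftrightarrow$ (b) is essentially a tautology, and I would dispatch it first. For any ideal $\mathfrak i$ of $\g$, the step of $\g/\mathfrak i$ is the smallest $k$ with $\g^{(k+1)}\subseteq \mathfrak i$; since $\g^{(s)}=V_s$, the quotient $\g/\mathfrak i$ has step strictly less than $s$ if and only if $V_s\subseteq \mathfrak i$. Applying this to each nontrivial ideal gives the equivalence.

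For (c) $\Rightarrow$ (b), I would use the standard Engel-type fact that every nontrivial ideal $\mathfrak i$ of a nilpotent Lie algebra meets the center nontrivially (apply Engel's theorem to the nilpotent action $\ad_\g$ on $\mathfrak i$ to produce a vector killed by all of $\g$). Since $V_s\subseteq \Center(\g)$ and $\dim \Center(\g)=1$, the center reduces to $V_s$, and $\mathfrak i\cap \Center(\g)\neq 0$ forces $V_s\subseteq \mathfrak i$. For the converse (b) $\Rightarrow$ (c), I would argue contrapositively: if $\dim \Center(\g)\geq 2$, then by \eqref{center:homog} either some layer $V_k$ with $k<s$ meets $\Center(\g)$ nontrivially, in which case a line $\mathbb R Z\subseteq V_k\cap \Center(\g)$ is a nontrivial (central, hence abelian) ideal disjoint from $V_s$; or else $\Center(\g)=V_s$ has dimension at least $2$, in which case any proper line inside $V_s$ is an ideal (again because $[V_s,\g]=0$) failing to contain all of $V_s$. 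Either alternative contradicts (b).

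No single step looks hard: the only delicate point is being careful that in (a) $\Leftrightarrow$ (b) the ideals are not required to be homogeneous (the notion of nilpotency step makes sense for arbitrary quotients), and that in (b) $\Rightarrow$ (c) the homogeneity of the center is what lets us decompose into the two cases above and produce a small ideal by hand. The appeal to Engel's theorem in (c) $\Rightarrow$ (b) is the only non-elementary ingredient.
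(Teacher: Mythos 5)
Your proof is correct and takes essentially the same route as the paper: (a)\,$\Leftrightarrow$\,(b) via the correspondence between ideals and the step of the quotient, (c)\,$\Rightarrow$\,(b) via the fact that a nontrivial ideal of a nilpotent Lie algebra meets the center (the paper relegates your Engel-theorem step to a footnote about $\ad_\g^j(\mathfrak i)$), and (b)\,$\Rightarrow$\,(c) by producing a small ideal from the homogeneity of the center. The only cosmetic difference is that in (b)\,$\Rightarrow$\,(c) you split on whether $\Center(\g)=V_s$, whereas the paper splits on whether $\dim V_s>1$; both case analyses are exhaustive and produce the same counterexample ideals.
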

\begin{proof}
The fact that (a) and (b) are equivalent comes from the correspondence between ideals and kernels of homomorphisms. If every ideal contains the last layer, then any quotient has lower step. Vice versa, if there exists an ideal that does not contain  the last layer, then the  quotient modulo that ideal has still step $s$.  

	To see that (b) implies (c),   suppose by contradiction that $ \dim \Center(\g) > 1 $. 
	We consider the two cases: $ \dim V_s > 1$ or $ \dim V_s =1$. In the first case, we get a contradiction since
	every one-dimensional subspace $\mathfrak i$ of $V_s$ is a  nontrivial ideal  of $  \g $ for which $ V_s \sus \mathfrak i $ is not true.
	In the case $ \dim V_s =1$, recalling that $ \Center(\g)$ is graded by \eqref{center:homog}, we get that
	 $ \Center(\g) \cap (V_1 \oplus \dots \oplus V_{s-1})   $
	 is a  nontrivial ideal  of $  \g $ for which $ V_s \sus \mathfrak i $ is not true. These contradictions prove that (b) implies (c).

	
To see that (c) implies (b),  let $ \mathfrak i \sus \g $  be a nontrivial ideal.
Since $\g$ is nilpotent, we have\footnote{Looking at the sequence $\ad_\g^j(\mathfrak i )$ one finds a non trivial subset of $\mathfrak i $ that commutes with $\g$.}
 that    $ \mathfrak i \cap \Center(\g) \neq \{ 0 \}$. Since $ \dim \Center(\g) = 1 $, we have $ \Center(\g) \sus\mathfrak i  $. Finally, since $ V_s \sus \Center(\g) \sus \mathfrak i $  we get the claim.
\end{proof}
	\begin{definition}\label{minimal}
		If $\g$ is a stratified Lie algebra that satisfies the
		equivalent conditions of Proposition~\ref{lemma:minimal_TFAE},
		then we say that $\g$ is {\em trimmed}.
		
	\end{definition}
	
We expect that every non-semigenerated algebra has a trimmed non-semigenerated quotient. However, we only prove the following weaker statement, which will suffice in the step-3 case.
	
\begin{prop}\label{prop:ample_quotient_small_center}
	Let $ \g $ be a	stratified Lie algebra.	If $\g$ is not semigenerated, then there exists a quotient algebra $\hat \g$ of $ \g $ that is  not semigenerated such that $ \Center(\hat\g)\cap V_j(\hat \g) = \{0\} $ for $ j =1,2 $, and $ \dim \Center(\hat \g) \cap V_j(\hat \g)\leq 1$ for all $j=3,\ldots , s$.
\end{prop}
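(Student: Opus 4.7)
The plan is to take $\hat\g$ to be a non-semigenerated Carnot quotient of $\g$ of minimal dimension; such a quotient exists because $\g$ itself is in this family and dimensions are bounded. I then verify the three asserted conditions by contradicting this minimality.

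For (a), any nonzero horizontal central $Z \in V_1(\hat\g) \cap \Center(\hat\g)$ gives a Lie-algebra splitting $\hat\g \cong \lie{V_1'} \times \R Z$ obtained by choosing a complement $V_1'$ of $\R Z$ in $V_1(\hat\g)$ and using that $Z$ is central. Since $\R Z$ is abelian hence semigenerated, Proposition~\ref{prop:quotients_product_ample} forces $\hat\g/\R Z \cong \lie{V_1'}$ to be non-semigenerated and of strictly smaller dimension, contradicting minimality. For (b), Lemma~\ref{lemma:ample_directions}\eqref{l:V2:cap:Z(g):univ:invariant} gives $V_2 \cap \Center(\hat\g) \subseteq \Cl(\s_W)$ for any non-semigenerating $W$, so by Lemma~\ref{lemma:quotient_of_cones_by_invariants}(ii) the quotient $\hat\g/(V_2 \cap \Center)$ is again non-semigenerated, contradicting minimality unless $V_2 \cap \Center = 0$.

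The main obstacle is (c): $\dim(V_j(\hat\g) \cap \Center(\hat\g)) \leq 1$ for every $j \geq 3$. Suppose otherwise; fix such a $j$ and a non-semigenerating $W$, and set $C_W := \Cl(\s_W) \cap V_j \cap \Center$. Because central elements of a single layer commute, BCH reduces to addition on $V_j \cap \Center$, and $C_W$ is a closed convex cone. The minimality argument used for (b), applied to the central ideal $\edge{\Cl(\s_W)} \cap V_j \cap \Center \subseteq \Cl(\s_W)$, forces it to vanish, so $C_W$ is pointed. Minimality also forces $\hat\g/\R Z$ to be semigenerated for every nonzero $Z \in V_j \cap \Center$, and the semigeneration inclusion in $\hat\g/\R Z$ pulls back (using openness of the quotient map) to $V_j \cap \Center \subseteq \Cl(\s_W + \R Z)$. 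For any witnessing sequence $s_n + t_n Z \to x$ with $s_n \in \s_W$: if $t_n$ stays bounded the $V_j$-components of $s_n$ converge to an element of $V_j \cap \Cl(\s_W) \cap \Center = C_W$, so $x \in C_W + \R Z$; if $|t_n| \to \infty$, dilating by $\delta_{|t_n|^{-1/j}}$ and using $\delta$-invariance plus closedness of $\s_W$ extracts the limit $-\operatorname{sgn}(t_n) Z \in C_W$. Consequently, whenever $Z \notin C_W \cup (-C_W)$ one in fact has $V_j \cap \Center \subseteq C_W + \R Z$.

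The proof ends by invoking the following separation fact for closed convex cones: if $C$ is a pointed closed convex cone in a real vector space $V$ with $\dim V \geq 2$, and if $V \subseteq C + \R Z$ for every $Z \notin C \cup (-C)$, then $V = C$. Indeed, given $x \notin C$ pick a separating functional $f$ with $f|_C \geq 0$ and $f(x) < 0$; one can arrange (automatically in dimension $\geq 3$, since a pointed cone contains no lines; generically in dimension $2$, by avoiding the finitely many functionals whose kernel is an extremal-ray line of $C$) that $\ker f \not\subseteq C \cup (-C)$, and then any $1$-dimensional $L \subseteq \ker f$ outside $C \cup (-C)$ would give $x \in C + L \subseteq \{f \geq 0\}$, contradicting $f(x) < 0$. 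Applied with $V = V_j \cap \Center$ and $C = C_W$, this forces $V_j \cap \Center = C_W$; but then $V_j \cap \Center$ is a subspace lying in $\Cl(\s_W)$, hence in $\edge{\Cl(\s_W)} \cap V_j \cap \Center = \{0\}$, contradicting $\dim(V_j \cap \Center) \geq 2$. The most delicate points are verifying the dilational extraction in the unbounded case and selecting a separating functional whose kernel avoids the bad set.
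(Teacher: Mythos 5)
Your proof is correct and follows the paper's architecture for points (a) and (b) (minimal non-semigenerated quotient, the product argument for $V_1$, the combination of Lemma~\ref{lemma:ample_directions}\eqref{l:V2:cap:Z(g):univ:invariant} with Lemma~\ref{lemma:quotient_of_cones_by_invariants}(ii) for $V_2$), but handles point (c) by a genuinely different route. The paper fixes a \emph{two-dimensional} central slice $V=\spann{v_1,v_2}\subseteq V_j\cap\Center$, shows that the cone $\tilde\s=\Cl(\s_W)\cap V$ meets every closed half-space of $V$ on the unit circle (via a dilation normalization of witnessing sequences), deduces that $\tilde\s$ is either all of $V$ or trapped in a half-space, and in the latter case squeezes out a full line $\R v\subseteq\tilde\s$, giving a contradiction via Lemma~\ref{lemma:quotient_of_cones_by_invariants}(ii). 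You instead work with the whole $V_j\cap\Center$, show $C_W=\Cl(\s_W)\cap V_j\cap\Center$ is a pointed closed convex cone (pointedness coming from the same minimality argument applied to $\edge{\Cl(\s_W)}\cap V_j\cap\Center$), establish $V_j\cap\Center\subseteq C_W+\R Z$ for every $Z\notin C_W\cup(-C_W)$ by the same pulled-back semigeneration of $\hat\g/\R Z$ and the same bounded/unbounded dilation dichotomy, and close with a duality/separation lemma for pointed cones. Both approaches use the identical algebraic and scaling input; yours replaces the paper's hands-on sphere-intersection argument with a cleaner convex-geometric statement, at the cost of having to prove that statement.

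Two small imprecisions in the separation lemma, neither fatal. For $\dim V\geq 3$, the justification ``a pointed cone contains no lines'' does not by itself give $\ker f\not\subseteq C\cup(-C)$: the union $C\cup(-C)$ can certainly contain lines (take $C$ the closed first quadrant of $\R^2$). What you actually need is that $C\cup(-C)$ contains no \emph{two-dimensional} subspace, which holds because if $U$ were such a subspace then $A:=U\cap C$ would be a pointed closed convex cone in $U\cong\R^2$ with $A\cup(-A)=U$, and a pointed cone in the plane is at most a sector of angle $<\pi$, so $A\cup(-A)$ cannot fill the plane. For $\dim V=2$ the ``finitely many functionals'' phrasing is loose (the bad set is all of $\partial C^*$), but the underlying observation is sound: pointedness of $C$ makes $\inter C^*$ nonempty and full-dimensional, and $x\notin C=C^{**}$ forces $\inter C^*\cap\{f(x)<0\}\neq\emptyset$, so one can pick $f$ there and then $\ker f\not\subseteq C\cup(-C)$ automatically.
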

\begin{proof}
	Let $ W\sus \g $ be a non-semigenerating horizontal half-space. Replacing $ \g $ with some quotient of it, we may suppose that for every proper homogeneous ideal $ \mathfrak  i $ of $ \g $ the half-space $ W/\mathfrak i $ is semigenerating in $ \g/\mathfrak i $. Indeed, if there exists a homogeneous ideal $ \mathfrak i $ of $ \g $ such that $ \g/\mathfrak i $ is not semigenerated, we replace $ \g $ with $ \g/\mathfrak i $. We repeat this procedure until every homogeneous ideal $\mathfrak  i $ of $ \g $ has the property that $ \g/\mathfrak i $ is semigenerated. This will happen, eventually, since every step-2 Carnot algebra is semigenerated.
	
	We shall then show that $\g$ has the required properties.
	 First, we check that $ \Center(\g)\cap V_1 $ is trivial. Indeed, if this space is nontrivial, then, for some $ n\geq 1 $,
	 $$ \g \cong (\g /(\Center(\g)\cap V_1))\times (\Center(\g)\cap V_1)\cong (\g /(\Center(\g)\cap V_1))\times\R^n. $$
	 Since the product of two semigenerated Lie algebras is semigenerated (see Proposition~\ref{prop:quotients_product_ample}), we get a contradiction.
	 
	 To prove that $ \Center(\g)\cap V_2 $ is trivial, recall that $  \Center(\g)\cap V_2 \sus \pedge{\Cl(\s_W)} $ by \eqref{l:V2:cap:Z(g):univ:invariant}. Then, denoting by $ \pi $ the projection $ \pi \colon \g \to \g/( \Center(\g)\cap V_2 ) $, by Lemma~\ref{lemma:quotient_of_cones_by_invariants}.ii we have that $ \pi(W) $ is not semigenerating. Since we assumed that $ W/\mathfrak i $ is semigenerating for every proper ideal of $ \g $, we deduce that $ \Center(\g)\cap V_2 = \{0\} $.
	 
	 Fix any $ j\geq 3 $. Assume by contradiction that $ \dim \Center(\g)\cap V_j > 1$. Take two linearly independent homogeneous vectors $ v_1,v_2\in \Center(\g)\cap V_j $. Let us denote by $ V $ the 2-dimensional homogeneous subspace $ \spann{v_1,v_2} $. Let us also fix a scalar product on $ \g $ that makes $ v_1 $ and $ v_2 $ orthonormal and set $ \tilde \s \coloneqq \Cl(\s_W)\cap V$.
	
	Observe that, as $ V $ is central, each line $ \R v \in V $ is an ideal of $ \g $. Therefore, being $ W+\R v $  semigenerating in $ \g/\R v $, we have for all $ u,v\in V $  that, if $ \pi \colon \g \to \g/\R v $ stands for the projection,
	\[
	u + \R v\subset V \sus [\g,\g] \sus \pi^{-1}([\g/\R v,\g/\R v]) \sus \pi^{-1}(\Cl(\s_{\pi(W)})) = \Cl( \pi^{-1}(\s_{\pi(W)})) \stackrel{\eqref{quotient_semigr}}{=}\Cl(\s_W + \R v),
	\]
	
	\noindent
	where we again used that $ \pi^{-1} $ and the closure commute.
	 Hence, denoting by $  B_{1/n}(u + \R v) $ the $ 1/n $-neighborhood of the line $ u + \R v $ within $ \g $, we obtain
	\begin{equation}\label{obs:nbhd}
	\text{for every }  n\in \N  \text{ there exists } s_n \in B_{1/n}(u + \R v) \cap \s_W. 
	\end{equation}
	
	
	We claim that   
	\begin{equation}\label{obs:every_halfspace}
	 \tilde \s \cap \S^1(V) \cap H \neq \emptyset \text{ for every closed half-space }H \sus V,
	\end{equation}
	where $ \S^1(V) $ stands for the unit circle of $ V $ with respect the restricted norm on $ V $.
	Indeed, denote by $ \partial H $ the boundary of $ H $, which is a line in $ V $, and let $ \nu $ be the inner unit normal of $ H $. Consider the sequence $ (s_n)_n $ given by \eqref{obs:nbhd} for the line $ 2\nu + \partial H \sus H$. Observe that, since $ V $ is spanned by homogeneous elements of the same degree, then for every $ u \in \g $ the projection of the continuous path $ \{\delta_\lambda(u) \mid 0<\lambda<1 \} $ on $ V $ is a straight line segment. Hence the projection of $ \{\delta_\lambda(s_n) \mid 0<\lambda<1 \} $ onto $ V $ is contained in $ H $ for every $ n\in \N $.
	Notice that, since $d(s_n,  2 \nu +\partial H)\leq 1/n$, then $ \| s_n  \| > 1 $, where we consider the norm induced by the fixed scalar product.
	Recall that $ \delta_\lambda $ is a continuous contraction for every $ \lambda \in (0,1) $. For that reason, for every $ n\in \N $, there exists $ \lambda_n \in (0,1) $ with $ \|\delta_{\lambda_n}(s_n) \| = 1 $. Moreover, still using the fact that $ \delta_{\lambda_n} $ is a contraction, from $ \mathrm{dist}(s_n,H)< 1/n $ we deduce that $ \mathrm{dist}(\delta_{\lambda_n}(s_n),\S^1(V)\cap H) < 1/n $ for each $ n $. 
	Being $ \S^1(V)\cap H $  compact and $ \s_W $ invariant under dilations, we find a converging subsequence of $ (\delta_{\lambda_n}(s_{n}))_n \sus \s_W$ with the limit in $ \S^1(V) \cap H$. Hence $ \tilde \s \cap \S^1(V)\cap H \neq \emptyset $, proving the claim \eqref{obs:every_halfspace}.
	
	Notice that since $ V $ is spanned by elements of the same degree of homogeneity and since $\tilde \s $ is dilation invariant, we have that   $\tilde\s $ is  a Euclidean convex cone.  We deduce that either $ \tilde\s  $ is contained in some closed half-space $ H \sus V $ or $ \tilde\s = V $. In the latter case $ V $ is an ideal of $ \g $ such that $ V \sus \Cl(\s) $, which implies by Lemma~\ref{lemma:quotient_of_cones_by_invariants}.ii that $ W/V $ is not semigenerating, contradicting our assumptions.
	
	We may then assume that there exists some closed half-space $ H \sus V $ such that $\tilde\s \sus H $. Let $ v $ denote one of the two intersection points of $\partial  H $ and $ \S^1(V) $. We are going to argue that $ v\in \tilde{\s} $. Consider a sequence $ (H_n)_n $ of closed half-spaces in $ V $ for which $ H_n\cap H \to \R_+ v $. By \eqref{obs:every_halfspace} we find a sequence $ (s_n)_n \sus \tilde{\s} \cap \S^1(V)$ such that each $ s_n \in H_n $. But since $\tilde\s \sus H $, we have that $ s_n \in H \cap H_n $ for every $ n $. Hence $ s_n \to v $ and $ v \in \tilde{\s} $. With a similar argument also $ - v\in \tilde{\s} $ and therefore $\{ \delta_\lambda(v)\mid \lambda\in \R \} = \R v \sus\tilde\s $.
	Now $ \R v $ is again an ideal of $ \g $ contained in $ \Cl(\s) $, leading to a contradiction by Lemma~\ref{lemma:quotient_of_cones_by_invariants}.ii and the fact that $ W /\R v $ is semigenerating. 
\end{proof}

%
 The following lemma is an algebraic observation. It will be  essential in our proof of Theorem~\ref{t:suff:alg:cond:for:being:a:halfspace}, which is a refinement of   Theorem~\ref{thm:diamond}. 
\begin{lemma}
	\label{subalg}
	Let $ \g $ be a stratified Lie algebra, 
let $ W \subseteq  \g $ be a horizontal half-space,   and let $ \h $ be a subalgebra of $ \g $ containing $\partial W $. Then, the following conditions are equivalent.
\begin{enumerate}
	\item There exists $ X\in V_1 \setminus \partial W $ such that $ \mathrm{ad}^k_X Y \in \h$ for all $ Y \in \partial W $  and $ k\geq 1 $;
\item 	$ [\g,\g] \subseteq \mathfrak{h} $.
\end{enumerate}

\end{lemma}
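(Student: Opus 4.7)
The plan is to prove the two implications separately, with (2) $\Rightarrow$ (1) being immediate and (1) $\Rightarrow$ (2) requiring a short auxiliary construction. For (2) $\Rightarrow$ (1), I will simply observe that $\partial W$ is a codimension-$1$ subspace of $V_1$, so any choice of $X\in V_1\setminus \partial W$ will satisfy $\ad_X^k Y\in[\g,\g]\subseteq\h$ for every $Y\in\partial W$ and $k\geq1$.

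For (1) $\Rightarrow$ (2), fixing an $X\in V_1\setminus\partial W$ as in (1), I would introduce the auxiliary Lie subalgebra
\[
\mathfrak{a} \coloneqq \lie{\partial W \cup \{\ad_X^k Y : Y\in \partial W,\ k\geq 1\}}\subseteq \g.
\]
The assumption $\partial W\subseteq\h$, condition (1), and the fact that $\h$ is a subalgebra together imply $\mathfrak{a}\subseteq\h$, so it will suffice to show that $\mathfrak{a}$ contains $[\g,\g]$.

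The decisive step will be to verify that $\mathfrak{a}$ is $\ad_X$-invariant. On the chosen generating set this is clear: $\ad_X$ maps $\partial W$ into $\ad_X\partial W$ and maps $\ad_X^k\partial W$ into $\ad_X^{k+1}\partial W$. I will then propagate this to all iterated brackets of generators by an induction on bracket-length using the Jacobi identity $\ad_X[u,v]=[\ad_X u,v]+[u,\ad_X v]$. Once this is in place, $\mathfrak{a}+\R X$ is a Lie subalgebra of $\g$ containing $\partial W\cup\{X\}=V_1$, and because $V_1$ Lie-generates $\g$, one gets $\mathfrak{a}+\R X=\g$.

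To conclude, I will use that $\mathfrak{a}$ is a graded subalgebra: its generators are homogeneous ($\partial W\subseteq V_1$ and $\ad_X^k\partial W\subseteq V_{k+1}$), so $\mathfrak{a}=\bigoplus_{k\geq 1}(\mathfrak{a}\cap V_k)$ and, in particular, $\mathfrak{a}\cap V_1=\partial W$. For $k\geq 2$ and $v\in V_k$, I will write $v=a+cX$ with $a\in\mathfrak{a}$ and $c\in\R$, and then take the $V_k$-component: since $\mathfrak{a}$ is graded and $cX\in V_1$, this forces $v$ to equal the $V_k$-homogeneous component of $a$, hence $v\in\mathfrak{a}$. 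This gives $[\g,\g]=V_2\oplus\cdots\oplus V_s\subseteq\mathfrak{a}\subseteq\h$, as desired. I expect the only slightly delicate point to be the $\ad_X$-invariance of $\mathfrak{a}$, but it reduces to the short Jacobi-identity induction described above.
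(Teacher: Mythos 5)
Your proof is correct, and it takes a genuinely different route from the paper. The paper reduces WLOG to the free nilpotent Lie algebra and then chooses a Hall basis in which $X$ is the minimal generator; the inductive structure of Hall basis elements then forces every degree-$k$ basis element involving $X$ to be exactly $\ad_X^{k-1} Y_i$ for some $Y_i \in \partial W$, which lands in $\h$ by hypothesis. You instead work intrinsically in $\g$: you form the graded subalgebra $\mathfrak{a}=\lie{\partial W \cup \{\ad_X^k Y : Y \in\partial W,\ k\geq 1\}}\subseteq\h$, observe that $\ad_X$ preserves the generating set and hence (via the derivation property of $\ad_X$) all of $\mathfrak{a}$, so that $\mathfrak{a}+\R X$ is a subalgebra containing $V_1$ and therefore equal to $\g$; comparing homogeneous components of degree $k\geq 2$ then gives $V_k\subseteq\mathfrak{a}\subseteq\h$. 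This avoids both the passage to the free algebra and the Hall-basis bookkeeping, at the price of needing the small structural facts that a Lie subalgebra generated by homogeneous elements is graded and that $\ad_X$-invariance of a subalgebra $\mathfrak{a}$ makes $\mathfrak{a}+\R X$ a subalgebra; both of these are cleanly justified in your sketch. The paper's argument is more explicit and constructive, while yours is shorter and stays entirely inside $\g$.
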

\begin{proof}
The fact that (2) implies (1) is trivial, since if $ [\g,\g] \subseteq \mathfrak{h} $, then any choice of basis will satisfy the requirements.
For the opposite direction, without loss of generality we 
	  may assume that $ \g $ is a free nilpotent Lie algebra of step $ s $.
	  We shall consider a basis for $ \g $ that is constructed by a well-known  algorithm due to M. Hall \cite{Hall_basis}. Below we say that a vector $ Z $ has degree $ k $ if $ Z\in V_k $. 
	
	 Let $ \{Y_1,\ldots, Y_m\} $ be a basis for $ \partial W $ and $ X\in V_1 \setminus \partial W $, whence $ \{X,Y_1,\ldots, Y_m\} $ is a basis for $ V_1 $. To construct the Hall basis, first fix an ordering for $ \{X,Y_1,\ldots, Y_m\} $ so that $ Y \geq X $ for all $ Y\in \{X,Y_1,\ldots, Y_m\}. $ Suppose then that we have defined Hall basis elements of degree $ 1, \ldots, k-1 $ with an ordering satisfying $ Y < Z $ if $ \deg Y < \deg Z  $. Then by Hall's construction $ [Y,Z] $ is a basis element of degree $ k $ if and only if $ Y $ and $ Z $ are elements of the Hall basis satisfying
	\begin{itemize}
		\item[(i)] $ Y < Z $;
		\item[(ii)]  $ \deg Y + \deg Z  = k$;
		\item[(iii)] if $ Z = [U, V] $, then $ Y \geq U $. \label{iii}
	\end{itemize}
	Assuming that $ \mathrm{ad}^k_X Y_i \in \h  $ for all basis elements $ Y_i $ of $ \partial W $ and $ k\in \N$,
	we shall show, by induction on $m$, that $ V_2 \oplus \cdots \oplus V_m  \sus \h $, for $m\geq2$. Clearly, we have that $ V_2 \sus \h$. Assume then that $ V_2 \oplus \cdots \oplus V_{k-1}  \sus \h$ and take an element $\widetilde{Y}= [Y,Z] $ of the Hall basis of degree $ k $. Recall that by (i) we have $ Y<Z $. If $ 2 \leq \deg Y, \deg Z \leq k-2 $, then $ \widetilde{Y}\in \h  $ by the induction hypothesis. Assume instead that $ \deg Y = 1 $ and $ \deg Z = k-1 $. Thus either $ Y = X $ or $ Y \in \partial W $ by construction of the  basis. If $ Y\in \partial W $, we have again that $ \widetilde{Y}\in\h$ since $ Z \in V_{k-1}\sus \h $ by the induction hypothesis.
	
	Finally, suppose that $ \widetilde{Y} = [X,Z]$. Since $ \deg Z >1$, there exist some $ U,V $ with degrees less than $ k-1 $ such that $ Z = [U, V] $. By (iii) then $ X \geq U $. Since the ordering for the basis is chosen such that $ X $ is the minimal element, this implies that $ U = X$ and $ \widetilde{Y} = [X,[X,V]] $. Similarly, since $ V $ is a degree $ k-2 $ element of the Hall basis, by (iii) we have again that $ V = [X,\widetilde{V}] $ for some $ \widetilde{V} $. Repeating this argument gives us finally that $ \widetilde{Y} = \mathrm{ad}^{k-1}_X Y_i $ for some $ Y_i\in \partial W $. Hence $  \widetilde{Y}\in \h  $, by assumption.
	We have shown that 
	$ [\g,\g]=V_2 \oplus \cdots \oplus V_s  \sus \h $ and the proof is complete.
\end{proof}

\section{Sufficient criteria for semigeneration}\label{sec:sufficient}
In Definition~\ref{def:diamond2} we  introduce   Carnot groups of type $ (\Diamond) $, which are a generalization of Carnot groups of type $ (\star) $.  After that we  present a proof for Theorem~\ref{thm:diamond}, which is formulated as a corollary of Theorem~\ref{t:suff:alg:cond:for:being:a:halfspace} (see Corollary~\ref{cor:dimaond_ample}). We conclude the section with Lemma~\ref{lemma:construct_diamonds}, which is useful in the construction of examples in Section~\ref{sec:low}.
\subsection{Carnot groups of type $(\Diamond)$}
	\begin{lemma}[Equivalent conditions for the definition of type $(\Diamond)$] \label{lemma:diamond_TFAE}
	For each subalgebra $ \h $ of a stratified Lie algebra $ \g $ for which 
	$ \h\cap V_1 $ has codimension one in $V_1$,
	the following are equivalent: 
	\begin{itemize}
		\item[(a)] there exists a basis $ \{X_1,\ldots,X_m\} $ of $ V_1 $ such that
		\begin{equation}\label{eq:diamond_basis}
		\ad_{X_i}^2 X_j \in\h \ja \ad_{\ad_{X_i}^kX_j}^2 (X_i) \in \h,
		\end{equation}
		for all $ i,j = 1,\ldots, m $ and $ k\geq 2 $;
		\item[(b)] there exists a basis $ \{Y_1,\ldots,Y_{m-1} \} $ of $ \h\cap V_1 $ and $ X\in V_1 \setminus \h $ such that
		\begin{equation}\label{eq:diamond_basis2}
		\ad_{X}^2Y_i \in \h, \quad\ad_{Y_i}^2 X \in \h \ja \ad_{\ad_{X}^k Y_i}^2 (X) \in \h,
		\end{equation}
		for all $ i = 1,\ldots, m-1 $ and $ k\geq 2 $. 
	\end{itemize}
	
\end{lemma}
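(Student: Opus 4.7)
The plan is to prove both implications by making an explicit change of basis and then checking the required conditions term by term using only that $\h$ is a subalgebra of $\g$ together with the Jacobi identity. The proof reduces to an algebraic bookkeeping exercise; there is no deep obstacle.

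For (b) $\Rightarrow$ (a), I would take the basis $\{X, Y_1, \dots, Y_{m-1}\}$ of $V_1$ and relabel it as $\{X_1,\dots,X_m\}$ with $X_1 \coloneqq X$. The conditions in \eqref{eq:diamond_basis} then split into cases. When both indices correspond to elements of $\h\cap V_1$, both statements follow because $\h$ is a subalgebra. When one index is $1$ and the other is not, the first condition in (a) is exactly one of the first two hypotheses of (b). For the second condition in (a), I would first observe that iterating $\ad_{Y_i}$ on $\ad_{Y_i}^2 X \in \h$ shows $\ad_{Y_i}^k X \in \h$ for every $k \geq 2$ (since $Y_i \in \h$), so $\ad_{\ad_{Y_i}^k X}^2 Y_i \in \h$ holds automatically; the remaining instance $\ad_{\ad_X^k Y_i}^2 X \in \h$ is precisely the third hypothesis of (b).

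For (a) $\Rightarrow$ (b), I would first note that since $\{X_1,\dots,X_m\}$ spans $V_1$ while $\h \cap V_1$ is a proper codimension-$1$ subspace, at least one $X_i$ lies outside $\h$, and after relabeling I may assume $X_1 \notin \h$. Set $X \coloneqq X_1$ and, for each $i \in \{1,\dots,m-1\}$, let $\lambda_i \in \R$ be the unique scalar such that $Y_i \coloneqq X_{i+1} - \lambda_i X_1 \in \h$; this produces a basis of $\h \cap V_1$. Checking (b) is then immediate for the conditions $\ad_X^2 Y_i \in \h$ and $\ad_{\ad_X^k Y_i}^2 X \in \h$: both reduce, via $\ad_{X_1}X_1 = 0$, to the two hypotheses of (a) applied with the indices $(1, i{+}1)$.

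The only step needing an actual calculation is verifying $\ad_{Y_i}^2 X \in \h$. Here I would expand bilinearly and invoke the Jacobi identity to rewrite $[X_1, [X_{i+1}, X_1]] = -\ad_{X_1}^2 X_{i+1}$, arriving at an expression of the form
\[
\ad_{Y_i}^2 X \;=\; \ad_{X_{i+1}}^2 X_1 \;+\; \lambda_i\, \ad_{X_1}^2 X_{i+1},
\]
in which each summand lies in $\h$ by (a). This short Jacobi computation is the only mildly nontrivial point and is the candidate for the ``main obstacle'' of the proof, although once written down it is routine.
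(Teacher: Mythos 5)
Your proposal is correct and matches the paper's proof essentially line by line: the same relabeling (the paper picks $X_m$ outside $\h$, you pick $X_1$), the same case analysis for (b) $\Rightarrow$ (a), and the same bilinear expansion giving $\ad_{Y_i}^2 X = \ad_{X_{i+1}}^2 X_1 + \lambda_i\,\ad_{X_1}^2 X_{i+1}$. One small cosmetic point: the rewriting $[X_1,[X_{i+1},X_1]] = -\ad_{X_1}^2 X_{i+1}$ uses only antisymmetry of the bracket, not the Jacobi identity.
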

\begin{proof}
	To show that (b) implies (a), assume that there exists a basis $ \{Y_1,\ldots, Y_{m-1} \} $ of $ \h\cap V_1 $ and $ X\in V_1\setminus \h $ satisfying \eqref{eq:diamond_basis2}. Then we shall check that the basis $ \{X,Y_1,\dots,Y_{m-1} \} $ satisfies conditions \eqref{eq:diamond_basis}. Indeed, from \eqref{eq:diamond_basis2} the only relations there are left to check are
	\[
	\ad_{\ad_{Y_i}^k Y_j}^2 (Y_i) \in \h \ja \ad_{\ad_{Y_i}^k X}^2 (Y_i) \in \h
	\]
	for all $ i,j \in \{1, \ldots,m-1\} $ and $ k\geq 2 $. The first one follows from the fact that $ Y_i, Y_j\in \h $ and that $ \h $ is a subalgebra. The second relation follows similarly, since $ Y_i \in \h $ and also $ \ad_{Y_i}^k X = \ad_{Y_i}^{k-2}(\ad_{Y_i}^2 X) \in \h $.
	
	For the direction (a) implies (b), let $ \{X_1,\ldots, X_m \} $ be a basis of $ V_1 $ for which \eqref{eq:diamond_basis} holds. Observe that $ X_l\in V_1 \setminus \h $ for some $ l\in \{1,\ldots,m \} $ since $ \h \cap V_1 $ is $ (m-1) $-dimensional. Assume, by possibly changing indexing, that $ l = m $. Since now $ V_1 = \R X_m \oplus (\h \cap V_1) $, then for each $ i \in \{1,\ldots,m-1 \} $ there exist $ a_i\in \R $ and $ Y_i \in \h \cap V_1 $ such that
	\[
	X_i = a_i X_m + Y_i.
	\]
	Therefore, for each $ i \in \{1,\ldots,m-1 \} $  we have
	\[
	Y_i = X_i - a_i X_m 
	\]
	and, consequently, $ \{Y_1,\ldots,Y_{m-1} \} $ is a basis of $ \h\cap V_1 $. We claim that this basis together with $ X_m \in V_1 \setminus \h $ satisfies condition \eqref{eq:diamond_basis2}. Indeed, for all $ k\geq 2 $ we have that
	\[
	\ad_{X_m}^k Y_i = \ad_{X_m}^{k-1}([X_m,Y_i]) = \ad_{X_m}^{k-1}([X_m,X_i-a_iX_m]) = \ad_{X_m}^k X_i \in \h,
	\]
	which proves that $ \ad_{X_m}^2 Y_i \in \h $ and $ \ad_{\ad_{X_m}^k Y_i}^2 (X_m) \in \h $ for all $ i= 1,\dots,m $. Furthermore,
	\[
	\ad_{Y_i}^2 X_m = [X_i-a_iX_m,[X_i-a_iX_m,X_m]] = \ad_{X_i}^2 X_m + a_i\, \ad_{X_m}^2X_i \in \h
	\]
	for every $ i= 1,\dots,m $, verifying the last missing condition of \eqref{eq:diamond_basis2}.
\end{proof}
Next definition is a restating of Definition \ref{def:diamond}.
\begin{definition}[Diamond type]\label{def:diamond2}
	Let $\g$ be a stratified Lie algebra. If each subalgebra $ \h $ of $ \g $ for which 
	$ \h\cap V_1 $ has codimension 1 in $V_1$ satisfies the
	equivalent conditions of Lemma~\ref{lemma:diamond_TFAE},
	then we say that $\g$ is  {\em of type $(\Diamond)$}.
	
\end{definition}

\begin{remark}
	Every type $ (\star) $ algebra, as introduced by Marchi, is of type $ (\Diamond) $. Indeed, we recall that a stratified Lie algebra is of type $ (\star) $ according to \cite{Marchi} if there exists a basis $ \{X_1,\dots,X_m\} $ of $ V_1 $ such that
	\begin{equation}\label{eq:star_basis}
		\ad_{X_i}^2X_j= 0 \quad \forall i,j = 1,\dots,m.
	\end{equation}
	Therefore, every such an algebra trivially satisfies \eqref{eq:diamond_basis} for every subalgebra $ \h $ of $ \g $.
\end{remark}

\begin{remark}\label{rmk:diamond+abelian_star}
	If $ \g $ is of type $ (\Diamond) $ and admits a step 2 subalgebra $ \h $ such that $ \h\cap V_1 $ has codimension 1 in $ V_1 $, then $ \g $ is of type $ (\star) $. Indeed, by \eqref{eq:diamond_basis} there exists a basis $ \{X_1,\dots,X_m\} $ of $ V_1 $ such that
	\[
	\ad^2_{X_i}X_j \in \h\cap V_3 = \{0\},
	\]
	for all $ i,j = 1,\dots,m $.
\end{remark}
Despite its simplicity, the following remark will prove useful when finding out if a given Lie algebra is of type $ (\star) $.
\begin{remark}\label{rmk:equiv_type_star}
	A Lie algebra $ \g $ is of type $ (\star) $ if and only if there exists a basis $ \{X_1,\dots,X_m \} $ of $ V_1 $ such that
	\[
	\ad_{X_i}^2(V_1)= 0 \quad  \forall\, i= 1,\dots,m.
	\]
	Indeed, this follows from the fact that the map $ Y \mapsto \ad^2_XY $ is linear for every $ X \in \g $.
\end{remark}
Next we prove   a result that is finer than Theorem~\ref{thm:diamond}. The latter is then obtained in  Corollary~\ref{cor:dimaond_ample} as an immediate consequence of Theorem~\ref{t:suff:alg:cond:for:being:a:halfspace}.
\begin{thm}
	\label{t:suff:alg:cond:for:being:a:halfspace}
	Let $ \g $ be a stratified Lie algebra.
	A horizontal half-space	$ W \subseteq \g$   is  semigenerating
	if and only if there exists a basis $ \{X_1,\ldots,X_m\} $ of $ V_1 $ such that, for $\s \coloneqq \Cl(\s_W)$,
	\begin{equation}\label{eq:thm_diamond}
	\ad_{X_i}^2 X_j \in \edge \s \ja \ad_{\ad_{X_i}^kX_j}^2 (X_i) \in \edge \s,
	\end{equation}
	for all $ i,j = 1,\ldots, m $ and $ k\geq 2 $. 
\end{thm}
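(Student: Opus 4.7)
My plan splits the proof into the two implications, each relying on the fact, established in Lemma~\ref{lem:CH}.(2), that $\edge{\s}$ is the largest Lie subalgebra of $\g$ contained in $\s$. The easy direction is immediate: if $W$ is semigenerating, then $[\g,\g]\subseteq \Cl(\s_W)=\s$; since $[\g,\g]$ is itself a subalgebra, Lemma~\ref{lem:CH}.(2) forces $[\g,\g]\subseteq\edge{\s}$, so every element appearing in \eqref{eq:thm_diamond}, being a bracket, lies in $\edge{\s}$, and any basis of $V_1$ witnesses \eqref{eq:thm_diamond}.

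For the converse, suppose \eqref{eq:thm_diamond} holds for some basis $\{X_1,\ldots,X_m\}$ of $V_1$. Since $\partial W$ has codimension one in $V_1$, after relabeling I may assume $X_m\notin \partial W$; replacing $X_m$ with $-X_m$ if necessary (which preserves \eqref{eq:thm_diamond}, as $\edge{\s}$ is a linear subspace), I can further arrange $X_m\in W\subseteq \mathfrak w_\s$. Writing $X_i=a_iX_m+Y_i$ with $a_i\in\R$ and $Y_i\in\partial W\subseteq\edge{\s}$, my target is to show $\ad_{X_m}^kY\in\edge{\s}$ for every $Y\in\partial W$ and every $k\geq 1$: once this is established, Lemma~\ref{subalg} applied with $X=X_m$ and $\mathfrak h=\edge{\s}$ gives $[\g,\g]\subseteq\edge{\s}\subseteq\s$, which is exactly the statement that $W$ is semigenerating.

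The core is an induction on $k\geq 1$ proving $\ad_{X_m}^kX_i\in\edge{\s}$ for every $i$, whence the general claim follows by linearity of $\ad_{X_m}^k$. For the base case $k=1$, I compute
\[
\ad_{Y_i}^2X_m=\ad_{X_i}^2X_m+a_i\,\ad_{X_m}^2X_i,
\]
which belongs to $\edge{\s}$ by the first clause of \eqref{eq:thm_diamond}; Lemma~\ref{lemma:new_invariant_directions}, applied with $X=X_m\in\mathfrak w_\s$ and $Y=Y_i\in\edge{\s}$, then yields $[X_m,Y_i]\in\edge{\s}$, and hence $\ad_{X_m}X_i=\ad_{X_m}Y_i\in\edge{\s}$. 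For $k=2$ the claim is a direct instance of \eqref{eq:thm_diamond}. For $k\geq 3$, the inductive hypothesis places $Y'\coloneqq \ad_{X_m}^{k-1}X_i$ in $\edge{\s}$, while the second clause of \eqref{eq:thm_diamond} supplies $\ad_{Y'}^2X_m\in\edge{\s}$, so a further application of Lemma~\ref{lemma:new_invariant_directions} yields $\ad_{X_m}^kX_i=[X_m,Y']\in\edge{\s}$. The main obstacle is precisely this induction: the two technical conditions in \eqref{eq:thm_diamond} are engineered to match the hypothesis $\ad_Y^2X\in\edge{\s}$ of Lemma~\ref{lemma:new_invariant_directions} at every step, first via the algebraic identity above at the base, then via the very shape of the second clause at higher levels; Lemma~\ref{subalg} then effects the passage from the single direction $X_m$ to the full bracket ideal $[\g,\g]$.
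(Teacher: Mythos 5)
Your proof is correct and follows essentially the same route as the paper's: both directions rest on the same application of Lemma~\ref{lem:CH}.(2), Lemma~\ref{lemma:new_invariant_directions}, and Lemma~\ref{subalg}, with the same induction on $k$ for the converse. The only difference is that you inline the basis manipulation (writing $X_i=a_iX_m+Y_i$ and computing $\ad_{Y_i}^2X_m=\ad_{X_i}^2X_m+a_i\ad_{X_m}^2X_i$) directly, whereas the paper delegates that bookkeeping to Lemma~\ref{lemma:diamond_TFAE} and then works with the converted basis.
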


\begin{proof}
	If $ W $ is semigenerating, then $ [\g,\g]\subset \edge{\s} $ and hence \eqref{eq:thm_diamond} is satisfied by any basis. Vice versa, we assume a basis satisfying \eqref{eq:thm_diamond} exists and plan to show that $ W $ is semigenerating.
	
	We start by noticing that $ \mathfrak e(\s) \eqqcolon \h $, as defined in \eqref{inv0}, is a Lie algebra (see Lemma~\ref{lem:CH}.2) for which $ \h\cap V_1 = \partial W $ has codimension 1 in $ V_1 $. Then, since \eqref{eq:thm_diamond} implies \eqref{eq:diamond_basis} by Lemma~\ref{lemma:diamond_TFAE} of such a subalgebra, there exists a basis $ \{Y_1,\ldots Y_{m-1} \} $ for $  \edge \s \cap V_1 $ and $ X\in V_1\setminus  \edge \s  $ that satisfy equations \eqref{eq:diamond_basis2}.  We claim that, to prove that $ W $ is semigenerating, it suffices to show that
	\begin{equation}\label{eq:suff_terms}
	\ad_{X}^kY_i \in  \edge \s \quad \forall i = 1, \ldots, m-1 \text{ and } k\geq 1.
	\end{equation} 
	Indeed, this is a consequence of Lemma~\ref{subalg}: being $ \ad_X $ linear, conditions \eqref{eq:suff_terms} would imply that $ \ad_X^kY \in \mathfrak e(\s) $ for every $ Y \in \partial W $. Then, by Lemma~\ref{subalg} we have $ [\g,\g] \sus \edge{\s}\sus \s $, which would prove that $ W $ is semigenerating.
	
	To show \eqref{eq:suff_terms}, we treat first the case $ k=1 $. Recall that, up to changing sign we have that $ X \in \mathfrak w _\s$. Since by \eqref{eq:diamond_basis2} we have $ \ad_{Y_i}^2 X \in  \edge \s  $ for all $ i = 1,\dots,m-1 $, then by Lemma~\ref{lemma:new_invariant_directions} we have that $ [X,Y_i]\in \edge{\s} $ for all $ i= 1,\dots,m-1 $. Since also $ [Y_i,Y_j] \in \edge \s$ for all $ i,j = 1,\dots,m-1 $ due to the fact that $ \edge \s$ is a Lie algebra, we deduce that
	\[
	V_2 = \mathrm{span}\{[X,Y_i],[Y_i,Y_j]\mid i,j= 1,\dots,m-1 \} \sus  \edge \s.
	\]

	The case $ k\geq 2 $ is proven by induction. The first step $ \ad_{X}^2Y_i \in  \edge \s $ is given by \eqref{eq:diamond_basis2}. Let us then assume that $ \ad_{X}^kY_i \in  \edge \s  $ for some $ k\geq 2 $. Since also $ \ad_{\ad_{X}^kY_i}^{2} (X) \in \edge \s  $ by \eqref{eq:diamond_basis2}, then by Lemma~\ref{lemma:new_invariant_directions} again we obtain
	\[
	[X,\ad_{X}^kY_i] =  \ad_{X}^{k+1} Y_i \in  \edge \s,
	\]
	which we needed to show.
\end{proof}
\begin{cor}[Theorem~\ref{thm:diamond}]\label{cor:dimaond_ample}
	Every Carnot algebra of type $( \Diamond) $ is semigenerated.
\end{cor}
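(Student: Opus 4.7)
The proof proposal is to apply Theorem~\ref{t:suff:alg:cond:for:being:a:halfspace} directly, using that the wedge edge $ \edge{\s}$ of the closed semigroup generated by a horizontal half-space always fits the role of the codimension-one subalgebra $\h$ appearing in Definition~\ref{def:diamond2}.

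More precisely, I would proceed as follows. Let $\g$ be a Carnot algebra of type $(\Diamond)$ and fix an arbitrary horizontal half-space $W\subseteq V_1$. Set $\s \coloneqq \Cl(\s_W)$ and $\h \coloneqq \edge{\s}$. By Lemma~\ref{lem:CH}.(2), $\h$ is a Lie subalgebra of $\g$. Moreover, by the two facts recorded just after the definition of $\s_W$, we have $\edge{\s}\cap V_1 = \partial W$, which is a hyperplane in $V_1$; hence $\h\cap V_1$ has codimension $1$ in $V_1$, so $\h$ is exactly the kind of subalgebra to which the type $(\Diamond)$ hypothesis applies.

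Invoking the hypothesis that $\g$ is of type $(\Diamond)$ (equivalently, condition (a) of Lemma~\ref{lemma:diamond_TFAE} for this particular $\h$), we obtain a basis $\{X_1,\ldots,X_m\}$ of $V_1$ such that
\[
\ad_{X_i}^2 X_j \in \h \ja \ad_{\ad_{X_i}^k X_j}^2(X_i)\in \h, \qquad \forall i,j=1,\ldots,m,\ k\geq 2.
\]
Since $\h = \edge{\s}$, these are precisely the relations \eqref{eq:thm_diamond} required in Theorem~\ref{t:suff:alg:cond:for:being:a:halfspace}. Therefore that theorem yields that $W$ is semigenerating. Because $W$ was an arbitrary horizontal half-space, $\g$ is semigenerated by definition.

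The argument is purely a matching of hypotheses, so there is no genuine obstacle: all the work has been performed in Theorem~\ref{t:suff:alg:cond:for:being:a:halfspace}, where the Diamond-basis relations are converted, via Lemma~\ref{lemma:new_invariant_directions} and a double induction, into the inclusion $[\g,\g]\subseteq \edge{\s}$. The only conceptual point worth emphasizing in the write-up is why the subalgebra $\edge{\s}$ is a legitimate choice of $\h$ in the Definition~\ref{def:diamond2}: namely, that $\edge{\s}\cap V_1 = \partial W$ has codimension one, which is exactly the interface between the algebraic statement (Diamond condition, stated for every codimension-one $\h$) and the analytic object (the edge of the closure of the generated semigroup).
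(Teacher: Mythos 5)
Your proposal is correct and coincides with the paper's own proof: both identify $\h = \edge{\s}$ (with $\s = \Cl(\s_W)$) as the codimension-one subalgebra to which the type $(\Diamond)$ hypothesis applies, and then invoke Theorem~\ref{t:suff:alg:cond:for:being:a:halfspace} to conclude that $W$ is semigenerating. No gap, no difference in approach.
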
	
\begin{proof}
	Let $ \g $ be of type $ (\Diamond) $ and consider a horizontal half-space $ W $ in $ \g $. Denoting $ \s \coloneqq \Cl(\s_W) $, from Lemma~\ref{lem:CH} we have that $ \edge{\s} $ is a subalgebra of $ \g $ for which $ \edge{\s}\cap V_1 = \partial W $ has codimension 1 in $ V_1(\g) $. Being $ \g $ of type $ (\Diamond) $ we apply  \eqref{eq:diamond_basis}
	with $\h=  \edge{\s}$ to have that there exists a basis $ \{X_1,\dots, X_m\} $ of $ V_1(\g) $ satisfying
	\[
	\ad_{X_i}^2 X_j \in \edge \s \ja \ad_{\ad_{X_i}^kX_j}^2 (X_i) \in \edge \s,
	\]
	for all $ i,j = 1,\ldots, m $ and $ k\geq 2 $. Hence $ W $ is semigenerating by Theorem~\ref{t:suff:alg:cond:for:being:a:halfspace}. Since $ W $ was arbitrary, we conclude that $ \g $ is semigenerated.
\end{proof}

The following lemma gives a method to construct examples of algebras of type $ (\Diamond) $  by taking suitable quotients of product Lie algebras. In Example~\ref{ex:engel*engel}, we shall use Lemma~\ref{lemma:construct_diamonds} to give an example of a Lie algebra of type $ (\Diamond) $ that is not of type $ (\star) $.
\begin{lemma}\label{lemma:construct_diamonds}Given $ n\in \N $ and stratified Lie algebras $ \g_l $ for each $ l \inn $, let $ \g \coloneqq \prod_{l=1}^{n} \g_l $ with projections $ \pi_l \colon \g \to \g_l $ and bases $ \{X_1^l,\dots,X_{m_l}^l \} $. If $ \mathfrak i$ is a homogeneous ideal of $ \g $ such that
	\begin{equation}\label{eq:proj_diamond}
	\ad_{X^l_i}^kX^l_j  \in \pi_l(\mathfrak i) \quad \text{and}\quad \ad_{\ad_{X_i^l}^kX_j^l}^2 (X_i^l)  \in \pi_l(\mathfrak i),
	\end{equation}for all $ i,j \in \{1,\dots,m_l\} $, $ k\geq 2 $ and $ l \inn $, then $ \g / \mathfrak i $ is of type $ (\Diamond) $.
\end{lemma}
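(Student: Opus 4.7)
The plan is to verify Definition~\ref{def:diamond2} directly for $\tilde\g := \g/\mathfrak i$. Let $\pi\colon \g \to \tilde\g$ be the canonical projection. Since $\mathfrak i$ is homogeneous, $\pi$ restricts to a surjection $V_1(\g) \to V_1(\tilde\g)$, so a maximal linearly independent subset of $\{\pi(X_i^l)\}_{i,l}$ forms a basis of $V_1(\tilde\g)$, which I would propose as the candidate $(\Diamond)$-basis for every relevant subalgebra~$\h$.

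Fix a subalgebra $\h$ of $\tilde\g$ with $\h\cap V_1(\tilde\g)$ of codimension $1$ in $V_1(\tilde\g)$; I need to check the two conditions of \eqref{eq:diamond_basis} for this basis. The first reduction is across factors: since $\g=\prod_l\g_l$ is a direct product, $[X_i^l, X_j^{l'}]=0$ whenever $l\neq l'$, and consequently every iterated bracket involving distinct factors vanishes in $\g$, so both relations in \eqref{eq:diamond_basis} hold trivially. The nontrivial content of the claim therefore concerns the same-factor case $l=l'$, where the hypothesis supplies $\ad_{X_i^l}^k X_j^l\in \pi_l(\mathfrak i)$ and $\ad_{\ad_{X_i^l}^k X_j^l}^2(X_i^l)\in\pi_l(\mathfrak i)$.

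The key step is to promote ``$\in \pi_l(\mathfrak i)$'' in $\g$ to ``$\in \h$'' in $\tilde\g$. My plan is to observe that, for any $y\in \mathfrak i$, one has $\pi_l(y)-y = -\sum_{l'\neq l}\pi_{l'}(y) \in \prod_{l'\neq l}\g_{l'}$, so taking quotients yields $\pi(\pi_l(y))\in \pi\bigl(\prod_{l'\neq l}\g_{l'}\bigr)$. Since $\prod_{l'\neq l}\g_{l'}$ is an ideal of~$\g$, its $\pi$-image is an ideal $\mathfrak{G}_{\neq l}$ of $\tilde\g$, and my goal becomes showing that $\pi(\pi_l(\mathfrak i))\subseteq \h$ for each $l$ contributing to the basis.

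The hard part will be this final containment. When the missing direction of $\h\cap V_1(\tilde\g)$ is entirely confined to a single factor $\pi(V_1(\g_{l_0}))$, the argument is clean: one has $\pi(V_1(\g_{l'}))\subseteq \h\cap V_1(\tilde\g)$ for every $l'\neq l_0$, whence $\mathfrak{G}_{\neq l_0}\subseteq \h$ (as $\h$ is a subalgebra), and the only nontrivial case $l=l_0$ is handled. In general, I would adapt the basis: exploit the freedom granted by Definition~\ref{def:diamond2} to choose, for the given $\h$, a basis of $V_1(\tilde\g)$ consisting of $\pi(X_i^l)$'s modified by suitable multiples of a single vector $\pi(X_a^{l_0})\notin \h$ (taking $l_0$ so that $\pi(V_1(\g_{l_0}))\not\subseteq \h$, which exists since $\sum_l\pi(V_1(\g_l))=V_1(\tilde\g)$), thereby reducing every cross-factor contribution to the previous single-factor case. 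The analogous argument applied with the second hypothesis then handles the $\ad_{\ad_{X_i^l}^k X_j^l}^2 X_i^l$ condition, completing the verification of $(\Diamond)$.
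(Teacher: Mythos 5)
Your proposal shares the paper's key idea: pick a horizontal vector lying outside $\h$ (your $\pi(X_a^{l_0})$, the paper's $X_1^1$), correct the other basis vectors by multiples of it so that they land in $\h$, and decouple the factors via the product structure. The packaging differs slightly -- the paper keeps $\{\pi(X_i^l)\}$ unmodified as the diamond basis and does the correction auxiliarily inside a lift $\widetilde\h\subseteq\g$ of $\h$, deriving $\prod_{l\geq 2}[\g_l,\g_l]\subseteq\widetilde\h$, whereas you correct the basis itself -- but the engine is the same.

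There is, however, a gap in your final paragraph. The asserted ``reduction to the single-factor case'' does not go through as stated: the clean case used $\pi(V_1(\g_{l'}))\subseteq\h$ for $l'\neq l_0$, hence $\pi\bigl(\prod_{l'\neq l_0}\g_{l'}\bigr)\subseteq\h$, but after the correction only the modified vectors $\pi(X_i^{l'})-c_i^{l'}\pi(X_a^{l_0})$ lie in $\h$, not $\pi(X_i^{l'})$ themselves, so the latter containment fails. What does survive the correction is only the weaker fact $\pi\bigl([\g_{l'},\g_{l'}]\bigr)\subseteq\h$ for $l'\neq l_0$, because the $\pi(X_a^{l_0})$-correction drops out of every bracket of two $l'$-factor vectors. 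That weaker fact still suffices, but to finish one must explicitly invoke the \emph{homogeneity} of $\mathfrak i$: given $Z\in\pi_{l_0}(\mathfrak i)$ homogeneous of degree $d\geq 2$, choose a homogeneous $y\in\mathfrak i$ of degree $d$ with $\pi_{l_0}(y)=Z$; then $y-Z\in\prod_{l'\neq l_0}V_d(\g_{l'})\subseteq\prod_{l'\neq l_0}[\g_{l'},\g_{l'}]$, whence $\pi(Z)=-\pi(y-Z)\in\h$. Your sketch neither replaces $\g_{l'}$ by $[\g_{l'},\g_{l'}]$ after correction nor isolates this use of homogeneity, so the final containment $\pi(\pi_{l_0}(\mathfrak i))\subseteq\h$ is left unsupported.
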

\begin{proof}
	Let $ \h $ be a subalgebra of $ \g/\mathfrak i $ for which $  \h\cap V_1(\g/\mathfrak i) $ has codimension 1 in $ V_1(\g/\mathfrak i) $ and, denoting by $ \pi $ the projection $ \pi \colon \g \to \g/\mathfrak i $, let $ \widetilde \h $ be a subalgebra of $ \g $ for which $ \pi(\widetilde \h) = \h $.
	Notice first that the set $ \{\pi(X_1^l),\dots,\pi(X_{m_l}^l) \}_{l=1}^n $ spans $ V_1(\g/\mathfrak i) $. Taking  \eqref{eq:diamond_basis} into account, since $ [X_i^l,X_j^k]= 0 $ whenever $ l\neq k $, to prove semigeneration of $ \g/\mathfrak i $ it is enough to check that
	\begin{equation}\label{eq:p_diamond2}
		\ad_{\pi(X^l_i)}^k\pi(X^l_j) = \pi(	\ad_{X^l_i}^kX^l_j) \in \h \ja \ad_{\ad_{\pi(X_i^l)}^k\pi(X_j^l)}^2 \pi(X_i^l) = \pi(\ad_{\ad_{X_i^l}^kX_j^l}^2 (X_i^l)) \in \h,
	\end{equation}
	for all $ i,j \in \{1,\dots,m_l\} $, $ k\geq 2 $ and $ l \inn $.
	
	To do this, let $ \langle \cdot,\cdot \rangle $ be a scalar product on $ V_1(\g) $ that makes the basis $ \{X_1^l,\dots,X_{m_l}^l \}_{l=1}^n  $ orthonormal. Let $ \nu \in V_1(\g)\setminus \{0\} $ be a vector that is orthogonal to $ \widetilde \h \cap V_1(\g) $, {i.e.}, let $ \nu \in \widetilde \h^\perp\cap V_1(\g) $. Write $ \nu $ as
	\[
	\nu = \sum_{l=1}^{n}\sum_{i=1}^{m_l}a^l_iX^l_i,\quad  a^l_i \in \R.
	\]
	Without loss of generality, assume that $ a_1^1 = 1 $. Then, for every $ l =2,\dots,n $ and $ i = 1,\dots,m_l $ we have that
	\[
	Y_i^l = X_i^l - a_i^lX_1^1 \in \widetilde \h,
	\] 
	as now $ \langle Y_i^l, \nu \rangle = 0 $. Since $ X_1^1 $ commutes with every $ \g_l $ for which $ l\in \{2,\dots,n\}  $, we immediately deduce that  
	\[
	\widetilde \h\supset \lie{\{Y_1^l,\dots,Y_{m_l}^l \}_{l=2}^n }\cap [\g,\g] = \lie{\{X_1^l,\dots,X_{m_l}^l \}_{l=2}^n }\cap [\g,\g] =  \prod_{l=2}^{n}[ \g_l,\g_l] .
	\]
	This proves \eqref{eq:p_diamond2} for all  $ i,j \in \{1,\dots,m_l\} $, $ k\geq 2 $ and $ l \in \{2,\dots,n\} $. It is then left to show that each term in \eqref{eq:proj_diamond} with $ l=1 $ is projected to $ \h$. Let $ Z $ be such a term. By  \eqref{eq:proj_diamond}, we have $ Z \in \pi_1(\mathfrak i) $. Since $ \mathfrak i $ is homogeneous and $ Z \in [\g_1,\g_1] $, there exists some $ \widetilde Z \in  \prod_{l=2}^{n} [ \g_l,\g_l]  $ such that $ Z + \widetilde Z \in \mathfrak i $. But since $  \prod_{l=2}^{n} [ \g_l,\g_l]\sus \widetilde \h $, we have that $ Z \in \widetilde \h + \mathfrak i $ and therefore $ \pi(Z)\in \h $.
\end{proof} 

\section{Some results and examples in low-step algebras}
\label{sec:low}
\noindent
In the following section we collect some lemmata that are valid in Carnot algebras of step at most 4 and which will be used later in Section 5. However, these lemmata can also be useful when proving semigeneration of specific examples in low step. In the end of this section we provide two examples in step 3 that show that, on the one hand,  algebras of type $ (\Diamond) $ form a strictly larger class than  algebras of type $ (\star) $ and, on the other hand, that yet being of type $ (\Diamond) $ is not a necessary condition for semigeneration.

The following result gives, for step $\leq 4$, equivalent conditions for being a semigenerating horizontal half-space.
	\begin{lemma}
		\label{lemma:step4_equiv_halfspace}
		Let $ \g $ be a stratified Lie algebra of step at most 4. For each horizontal half-space  $ W $ in $ \g $, writing $\s = \Cl(\s_W)$,
		the following are equivalent:
		\begin{enumerate}[(i)]
			\item $ V_2 \sus \edge \s $;
			\item $ \ad_{Y}^2X \in \edge \s$ for every   $ X\in \mathfrak w_\s $ and $ Y\in \edge \s \cap V_1 $;
			\item $ \ad_{Y}^2X \in \edge \s$, for every   $ X, Y\in    V_1 $ ;
			\item $ V_3 \sus \edge \s$;
			\item $ W $ is semigenerating.
		\end{enumerate} 
	\end{lemma}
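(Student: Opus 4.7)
The plan is to close the cycle
\[
(v) \Rightarrow (iv) \Rightarrow (iii) \Rightarrow (ii) \Rightarrow (i) \Rightarrow (v).
\]
The engine throughout is Lemma~\ref{lemma:new_invariant_directions}, combined with the observation that in step at most $4$ any iterated bracket of the form $\ad_Y^2 X_1$ with $Y$ homogeneous of degree $\geq 2$ lies in $V_{2\deg Y + 1}$ and therefore vanishes automatically.

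The easy half comes first. If $W$ is semigenerating, then $[\g,\g] \subseteq \Cl(\s_W)=\s$; since $[\g,\g]$ is a subalgebra and $\edge{\s}$ is the largest subalgebra contained in $\s$ by Lemma~\ref{lem:CH}.(2), we obtain $V_3\subseteq [\g,\g]\subseteq \edge{\s}$, giving (iv). The implication $(iv)\Rightarrow(iii)$ is immediate because $\ad_Y^2 X \in V_3 \subseteq \edge{\s}$ for every $X,Y\in V_1$, and $(iii)\Rightarrow(ii)$ is a quantifier restriction.

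For $(ii)\Rightarrow(i)$, I would fix a basis $\{X_1,\ldots,X_m\}$ of $V_1$ with $X_1 \in W \subseteq \mathfrak w _\s$ and $X_2,\ldots,X_m \in \partial W \subseteq \edge{\s}\cap V_1$. Hypothesis (ii) gives $\ad_{X_j}^2 X_1 \in \edge{\s}$ for $j\geq 2$, so Lemma~\ref{lemma:new_invariant_directions} yields $[X_1,X_j] \in \edge{\s}$. Since $[X_i,X_j]\in \edge{\s}$ already holds for $i,j\geq 2$ (as $\edge{\s}$ is a subalgebra by Lemma~\ref{lem:CH}.(2)), we conclude $V_2 = \mathrm{span}\{[X_i,X_j]\}\subseteq \edge{\s}$, which is (i).

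The substantive step is $(i)\Rightarrow(v)$, done by layer-by-layer bootstrapping with the same basis. Assuming $V_2 \subseteq \edge{\s}$, for any $Y\in V_2\subseteq \edge{\s}$ we have $\ad_Y^2 X_1 \in V_5 = \{0\} \subseteq \edge{\s}$, so Lemma~\ref{lemma:new_invariant_directions} gives $[X_1,Y]\in \edge{\s}$; combined with $[X_j,Y]\in \edge{\s}$ for $j\geq 2$, this forces $V_3 = [V_1,V_2]\subseteq \edge{\s}$. Repeating once more with any $Z\in V_3 \subseteq \edge{\s}$, for which $\ad_Z^2 X_1 \in V_7=\{0\}$, gives $[X_1,Z]\in \edge{\s}$; together with $[V_2,V_2]\subseteq \edge{\s}$ this yields $V_4\subseteq \edge{\s}$. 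Hence $[\g,\g] = V_2\oplus V_3 \oplus V_4 \subseteq \edge{\s}\subseteq \s = \Cl(\s_W)$, so $W$ is semigenerating.

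I do not anticipate any significant obstacle: the hypothesis $s\leq 4$ is used exactly to make the obstructing term $\ad_Y^2 X_1$ in Lemma~\ref{lemma:new_invariant_directions} vanish as soon as $Y$ has bracket-degree $\geq 2$, which closes the induction on the layers in one shot. Note that this is also where the argument would break down in step $\geq 5$, suggesting that the statement is essentially sharp as it stands.
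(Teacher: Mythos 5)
Your proof is correct and uses the same engine as the paper's: the chain $(v)\Rightarrow(iv)\Rightarrow(iii)\Rightarrow(ii)\Rightarrow(i)\Rightarrow(v)$, with Lemma~\ref{lemma:new_invariant_directions} doing the work in $(ii)\Rightarrow(i)$ and $(i)\Rightarrow(v)$, and with the step-$\le 4$ hypothesis invoked exactly to kill the obstruction term $\ad^2_Y X_1$ once $Y$ has degree $\ge 2$. The only difference is cosmetic: in $(i)\Rightarrow(v)$ the paper verifies the condition $\ad_X^k Y\in\edge\s$ for $Y\in\partial W$, $k=1,2,3$, and then invokes the Hall-basis Lemma~\ref{subalg} to conclude $[\g,\g]\subseteq\edge\s$, whereas you span $V_3=[V_1,V_2]$ and $V_4=[V_1,V_3]+[V_2,V_2]$ directly, using that $\edge\s$ is a subalgebra containing $\partial W$ and the newly obtained brackets with $X_1$. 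Your route is slightly more elementary, since it bypasses Lemma~\ref{subalg}; the paper's route is more uniform with its treatment of the arbitrary-step case in Theorem~\ref{t:suff:alg:cond:for:being:a:halfspace}, where the layer-by-layer spanning would not close without a Hall-basis argument. Both are fine here because in step $\le 4$ the number of layers is small enough that the direct argument terminates.
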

	\begin{proof}
		Implications (v) $ \implies $ (iv) $ \implies $ (iii) $ \implies $ (ii) are immediate. Regarding (ii) $ \implies$ (i), recall that $ V_2 $ is spanned by elements of the form $ [Y,Y'] $ and $ [Y,X] $, where $ Y,Y'\in \edge{\s} $ and $  X\in \mathfrak w_\s $. Since, by Lemma~\ref{lem:CH}.2, $ \edge{\s} $ is a Lie algebra, each term  $ [Y,Y'] \in \edge{\s}$ and, by Lemma~\ref{lemma:new_invariant_directions}, the terms $ [X,Y]$ belong to $\edge{\s}$.
		
		Let us finally prove (i) $ \implies $ (v). We claim that it is enough to show that $ \ad_X^kY \in \edge \s $ for every $ X \in V_1 $, $ Y\in \edge \s $ and $ k=1,2,3 $. Indeed, then by Lemma~\ref{subalg} we have $ [\g,\g]\sus \mathfrak e(\s) \sus \s $ and $ W $ is semigenerating. Now $ \ad_X^1Y = [X,Y] \in \edge \s $ by (i). Then, as $ X\in V_1 \sus  \mathfrak w _\s\cup(-\mathfrak w_\s) $ and $ \ad_{[X,Y]}^2X \in V_5 = \{0\} \sus \mathfrak e(\s) $, by Lemma~\ref{lemma:new_invariant_directions} we have $ [[X,Y],X] = -\ad_X^2Y \in \edge{\s} $. Similarly, $ \ad_{\ad_X^2Y}^2X = 0 $ and therefore $ [\ad_X^2Y,X] = \ad_X^3Y \in \edge{\s}$. So (v) follows.
	\end{proof}
\begin{remark}
Let us observe what happens to condition $(\Diamond)$ in low step.
	Given $ k\geq 2 $, the vector $\ad_{\ad_{X}^k Y}^2 (X)$ is in $ V_{2k + 3} $. Hence, if $ \g $ is of step $ s $, it is enough to require the conditions \eqref{eq:diamond_basis} or \eqref{eq:diamond_basis2} for all $ k \leq (s-3)/2 $. In particular, if $ s \leq 6 $, then a horizontal half-space $ W $ of $ \g $ is semigenerated if there exists a basis $ \{X_1,\ldots,X_m\} $ of $ V_1 $ such that
	\[
	\ad_{X_i}^2 X_j \in \lie{\partial W}
	\]
	for all $ i,j = 1,\ldots, m $. Here, we denote by $\lie{\partial W}$ the Lie subalgebra of $\g$ generated by the subset ${\partial W}$. 
\end{remark}
Similarly to Lemma~\ref{lemma:new_invariant_directions}, the following lemma gives (in step at most 4) a method to deduce new directions that are contained in the edge of a semigroup generated by a horizontal half-space. Lemma~\ref{lemma:step3_ideal(invV2)_is_inv} below will be used in Example~\ref{ex:tame_not_necessary} and again in the proof of Proposition~\ref{prop:minimal_nonCNP_step3_isEngeln}.

	\begin{lemma}\label{lemma:step3_ideal(invV2)_is_inv}
	Let $ \g $ be a stratified Lie algebra of step  at most  4  and let $ W $ be a horizontal half-space in $ \g $. Let $\s \coloneqq \Cl( \s_W) $. If $ Z \in V_2\cap \edge \s $, then $ \Ideal_{\g}(Z) \sus \edge \s  $.
	\end{lemma}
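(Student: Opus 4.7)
The plan is to exploit the step-at-most-4 hypothesis to show that $\Ideal_\g(Z)$ has a very restricted form and that each of its generators lies in $\edge{\s}$. Since $Z \in V_2$ and $V_k = \{0\}$ for $k \geq 5$, the vector space $\Ideal_\g(Z)$ is spanned by $Z$, $[V_1, Z]$, $[V_2, Z]$, and $[V_1,[V_1,Z]]$; all further iterated brackets have homogeneity degree $\geq 5$ and vanish. It therefore suffices to check that these four pieces sit in $\edge{\s}$.

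The main tool is the double application of Lemma~\ref{lemma:new_invariant_directions}, whose $\ad$-squared hypothesis is free in this setting because iterating brackets quickly lands in trivial layers. Concretely, I would first prove that $[V_1, Z] \subseteq \edge{\s}$. Given $X \in V_1$, change sign if necessary to arrange $X \in \mathfrak{w}_\s$; since $Z \in V_2 \cap \edge{\s}$ one has $\ad_Z^2 X \in V_5 = \{0\} \subseteq \edge{\s}$, so Lemma~\ref{lemma:new_invariant_directions} gives $[X, Z] \in \edge{\s}$, and linearity of the bracket extends this to all of $V_1$.

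Next I would show $[V_1,[V_1,Z]] \subseteq \edge{\s}$. For $X_1, X_2 \in V_1$, the previous step gives $[X_2, Z] \in \edge{\s} \cap V_3$, so $\ad_{[X_2,Z]}^2 X_1 \in V_7 = \{0\}$. Possibly replacing $X_1$ with $-X_1$ to put it in $\mathfrak{w}_\s$, a second application of Lemma~\ref{lemma:new_invariant_directions} yields $[X_1,[X_2,Z]] \in \edge{\s}$. Finally, for $[V_2,Z]$, every element of $V_2$ is a combination of brackets $[A,B]$ with $A, B \in V_1$, and the Jacobi identity
\[
[[A,B], Z] \;=\; [A,[B,Z]] \;-\; [B,[A,Z]]
\]
rewrites $[V_2,Z]$ as a subspace of $[V_1,[V_1,Z]]$, which was just shown to be in $\edge{\s}$. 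Putting the three inclusions together with $Z \in \edge{\s}$ gives $\Ideal_\g(Z) \subseteq \edge{\s}$.

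There is no substantial obstacle here: the only subtlety is choosing the right representative in $\pm \mathfrak{w}_\s$ when invoking Lemma~\ref{lemma:new_invariant_directions}, and recognizing that the step bound forces the auxiliary $\ad^2$-terms to vanish automatically. The argument breaks down in step $\geq 5$ precisely because $\ad_Z^2 V_1$ may no longer be zero.
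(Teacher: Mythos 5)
Your proof is correct and takes essentially the same route as the paper's: both decompose $\Ideal_\g(Z)$ by degree, observe that the relevant $\ad^2$-terms die by the step bound, and apply Lemma~\ref{lemma:new_invariant_directions} twice. The only cosmetic difference is that the paper fixes one $X \in W \subseteq \mathfrak{w}_\s$, writes $V_1 = \R X \oplus \partial W$, and uses $\partial W \subseteq \edge{\s}$ together with the subalgebra property of $\edge{\s}$ to dispatch the $\partial W$-brackets without invoking the lemma, whereas you treat all of $V_1$ uniformly by the sign trick and explicitly peel off $[V_2,Z]$ via Jacobi (which the paper absorbs silently into $[V_1,[V_1,Z]]$).
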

	\begin{proof}
Observe that $ V_1 = \R X \oplus \partial W $ for some $ X \in W\subseteq \mathfrak w_\s $.
The ideal $ \mathfrak i \coloneqq\Ideal_{\g}(Z)$ is graded and, recalling that $Z\in V_2$, we have that  its layers are
$$V_1(  \mathfrak i ) = \{0\}
,\qquad 
V_2(  \mathfrak i ) = \R Z,\qquad 
V_3(  \mathfrak i ) = 
{\rm span}\{ [X,Z] , [\partial W ,Z]    \}  
.
$$ $$
V_4(  \mathfrak i ) = 
{\rm span}\{ [X,[X,Z]] , [X,[\partial W ,Z]],  [\partial W ,[X,Z] ],  [\partial W , [\partial W ,Z]  ]  \} .$$
We plan to show that  $ \Ideal_{\g}(Z) \sus \edge \s $, where we 
recall that $ \edge \s $ is a Lie algebra by Lemma~\ref{lem:CH}. On the one hand, by assumption, we have that $Z\in  \edge \s $, so from $ \partial W \subseteq   \edge \s $ we get that  $ [\partial W,Z]\in \edge \s $. 
On the other hand, with the aim of   applying  Lemma~\ref{lemma:new_invariant_directions}, we observe that, since $Z\in V_2$, we have    
 $ \ad^2_Z X \in V_5 = \{0\} \in \edge{\s}$, and hence we also have $[X,Z] \in \edge{\s} $. Hence $V_3(  \mathfrak i )  \sus \edge \s $.

We also check that  $V_4(  \mathfrak i )  \sus \edge \s $. Since 
$ \edge \s $ is closed under bracket, we immediately have that $ [\partial W ,[X,Z] ],  [\partial W , [\partial W ,Z]\sus \edge \s $. 
Regarding $[X,[X,Z]] , [X,[\partial W ,Z]], $ we repeat the previous part of the argument of this proof with $Z'\in \{  [X,Z]\}\cup [\partial W ,Z] $. Indeed, we have that 
$ \ad^2_{Z'} X  = 0$ and $Z' \in \edge \s $. Hence, by  Lemma~\ref{lemma:new_invariant_directions} we also have $[X,Z'] \in \edge{\s} $. 
\end{proof}
	

Next we prove that having a sufficiently large semigenerated subalgebra implies semigeneration. We shall exploit this fact later in the proof of Proposition~\ref{prop:minimal_nonCNP_step3_isEngeln}.\\

	\begin{lemma}\label{lemma:CNPstep3_subalg_implies_CNP}
		Let $ \g $ be a  stratified Lie algebra of step at most 4. If $ \g $ has a   semigenerated proper subalgebra $ \h $ such that $ V_3(\g)\sus \h $, then $ \g $ is   semigenerated.
	\end{lemma}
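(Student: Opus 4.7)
The plan is to verify condition (iv) of Lemma~\ref{lemma:step4_equiv_halfspace}, namely $V_3(\g) \sus \edge{\s}$, for every horizontal half-space $W$ in $\g$, where $\s \coloneqq \Cl(\s_W)$. Fix such a $W$ and write $W = \lambda^{-1}([0,\infty))$ for some nonzero $\lambda \in V_1(\g)^*$. Since $\h$ is a stratified (hence homogeneous) subalgebra containing $V_3(\g)$, its first layer $V_1(\h) = \h \cap V_1(\g)$ is a well-defined subspace of $V_1(\g)$, and I would split the argument according to whether $\lambda$ vanishes on $V_1(\h)$.

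In the first case $\lambda|_{V_1(\h)} \equiv 0$, one has $V_1(\h) \sus \partial W = \edge{\s} \cap V_1(\g)$. Because $\edge{\s}$ is a subalgebra of $\g$ by Lemma~\ref{lem:CH}.(2), and because $V_1(\h)$ Lie-generates $\h$ (as $\h$ is stratified), this immediately gives $\h \sus \edge{\s}$, and in particular $V_3(\g) \sus \h \sus \edge{\s}$.

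In the complementary case $\lambda|_{V_1(\h)} \not\equiv 0$, the intersection $W' \coloneqq W \cap V_1(\h)$ is a horizontal half-space of $\h$. By the semigeneration hypothesis on $\h$, the semigroup $S_{W'}^{\h}$ generated by $\exp(W')$ inside $\exp(\h)$ has closure containing $\exp([\h,\h])$; since any product of elements of $\exp(W') \sus \exp(W)$ already lies in $S_W$, one has $S_{W'}^{\h} \sus S_W$, and passing to closures yields $[\h,\h] \sus \s$. From $V_3(\g) \sus \h$ together with the homogeneity of $\h$ we get $V_3(\h) = V_3(\g)$, so $V_3(\g) \sus [\h,\h] \sus \s$. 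As $V_3(\g)$ is a linear subspace, every line $\R X$ with $X \in V_3(\g)$ lies in $\s$, whence $V_3(\g) \sus \edge{\s}$.

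In either case $V_3(\g) \sus \edge{\s}$, and Lemma~\ref{lemma:step4_equiv_halfspace} then yields that $W$ is semigenerating. There is no substantial obstacle; the only care needed is in the initial dichotomy, which isolates the degenerate situation in which $V_1(\h)$ lies entirely inside the hyperplane $\partial W$ (and no half-space of $\h$ is available to apply the hypothesis) from the generic situation in which $W$ restricts to an honest horizontal half-space of $\h$.
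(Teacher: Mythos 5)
Your proof is correct and follows essentially the same structure as the paper's: both split into the degenerate case $V_1(\h)\subseteq\partial W$ (resolved by the fact that $\edge{\s}$ is a subalgebra containing $\partial W$ and hence all of $\h$) and the generic case where $W\cap V_1(\h)$ is a horizontal half-space of $\h$ (resolved by applying the semigeneration hypothesis to $\h$ and observing $V_3(\h)=V_3(\g)$), concluding in both cases via condition (iv) of Lemma~\ref{lemma:step4_equiv_halfspace}. The only differences are cosmetic (you phrase the dichotomy via the defining functional $\lambda$ rather than via set containment).
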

	\begin{proof}
	Let $ W $ be a horizontal half-space and let us show that 
	it is semigenerating.
	Set $ H\coloneqq  V_1(\h) $. If $ H\subset \partial W $, then 
	\[
	V_3(\g) \sus \h\sus \lie{\partial W}\sus \pedge {\s_W}.
	\]
	Consequently, by Lemma~\ref{lemma:step4_equiv_halfspace} we deduce that  $ W $   is semigenerating. We then assume that  $ H \nsubseteq \partial W$. Observe that 
	$ \widetilde W \coloneqq H\cap W  $ is a horizontal half-space in $ H $.
	Hence, since by assumption  $\h$   is semigenerated, $ \widetilde W$ is semigenerating within $\h$. In particular, denoting by 
	$\bar \s_{\widetilde W}^\h$  the closure of the (log of the) semigroup generated by $ \widetilde W$   within $\h$, we have
	that $V_3(\h) \sus  \bar \s_{\widetilde W}^\h$.
Since $\h$ is assumed to contain the third layer of $\g$, we get the inclusions
	\[
	\ V_3(\g) \sus V_3(\h) \subset \bar\s_{\widetilde W}^\h \sus \bar\s_W,
	\]
	where the last containment is a consequence of the inclusions $\h\subset \g$ and $ \widetilde{W}\subset W $. Since $ V_3(\g) $ is a vector subspace of $ \g $, then by definition of $ \pedge{\bar\s_W} $ we have $ V_3(\g) \subset \pedge{\bar\s_W} $.
	Hence  $  W $ is semigenerating again by Lemma~\ref{lemma:step4_equiv_halfspace}.
\end{proof}

We remark that, actually, the above Lemma~\ref{lemma:CNPstep3_subalg_implies_CNP} has the following analogue in algebras of arbitrary step: if $ \h $ is a semigenerated subalgebra of $ \g $ and there exists a basis $ \{X_1,\dots,X_m\} $ of $ V_1(\g) $ such that the Diamond-terms \eqref{eq:diamond_basis} are in $ \h $, then $ \g $ is semigenerated. The  proof is the same, but in the final step one needs to use  Theorem~\ref{t:suff:alg:cond:for:being:a:halfspace} instead of Lemma~\ref{lemma:step4_equiv_halfspace}.

\begin{corollary}[of Proposition~\ref{prop:ample_quotient_small_center}]
	\label{lemma:minimal_nonCNP_quotient_step3}
	Let $ \g $ be a
	stratified Lie algebra of step 3.
	If $\g$ is not   semigenerated, then there exists a quotient algebra of $ \g $ that is
	trimmed and not semigenerated.
\end{corollary}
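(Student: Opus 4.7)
The plan is to apply Proposition~\ref{prop:ample_quotient_small_center} directly and combine it with the characterization of trimmed algebras from Proposition~\ref{lemma:minimal_TFAE}. Specifically, starting from a non-semigenerated step-3 algebra $\g$, Proposition~\ref{prop:ample_quotient_small_center} produces a quotient $\hat\g$ that is not semigenerated and satisfies $\Center(\hat\g)\cap V_j(\hat\g)=\{0\}$ for $j=1,2$, together with $\dim\bigl(\Center(\hat\g)\cap V_j(\hat\g)\bigr)\leq 1$ for $j\geq 3$. I intend to show that such a $\hat\g$ automatically has $\dim\Center(\hat\g)=1$, and is thus trimmed by Proposition~\ref{lemma:minimal_TFAE}.(c).

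First I would observe that $\hat\g$ necessarily has nilpotency step exactly $3$. Indeed, any step-$1$ algebra is abelian (hence trivially semigenerated because $[\hat\g,\hat\g]=\{0\}$), and every step-$2$ stratified algebra is semigenerated by \cite{fssc}. Since $\hat\g$ is a quotient of $\g$, whose step is $3$, and is not semigenerated, its step must be $3$.

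Next, I would use that the center of a stratified Lie algebra is homogeneous, as recorded in~\eqref{center:homog}. Applied to $\hat\g$, this gives
\[
\Center(\hat\g)=\bigl(\Center(\hat\g)\cap V_1(\hat\g)\bigr)\oplus\bigl(\Center(\hat\g)\cap V_2(\hat\g)\bigr)\oplus\bigl(\Center(\hat\g)\cap V_3(\hat\g)\bigr).
\]
By the conclusions of Proposition~\ref{prop:ample_quotient_small_center}, the first two summands are trivial and the third has dimension at most $1$, so $\dim\Center(\hat\g)\leq 1$. On the other hand, every nontrivial nilpotent Lie algebra has a nontrivial center, so $\dim\Center(\hat\g)\geq 1$. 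Combining these bounds yields $\dim\Center(\hat\g)=1$.

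Finally, by the equivalence (c)$\Leftrightarrow$(a) in Proposition~\ref{lemma:minimal_TFAE}, this is exactly the condition that $\hat\g$ is trimmed, which concludes the argument. There is no real obstacle here; the only point requiring a moment's thought is ruling out the degenerate lower-step cases, which is handled by invoking \cite{fssc} for the step-$2$ case.
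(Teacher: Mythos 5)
Your argument is correct and matches the paper's proof essentially line for line: apply Proposition~\ref{prop:ample_quotient_small_center}, then use homogeneity and nontriviality of the center to force $\dim\Center(\hat\g)=1$, which is the trimmed condition. The only cosmetic difference is that you rule out step-$1$ and step-$2$ quotients by invoking semigeneration of low-step algebras, whereas this is already implicit in the center conditions (if $\hat\g$ had step $\leq 2$, its center would meet $V_1$ or $V_2$ nontrivially, contradicting the proposition), so this extra step is harmless but not strictly necessary.
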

\begin{proof}
	By Proposition~\ref{prop:ample_quotient_small_center}, there exists a quotient algebra $ \hat \g $ of $ \g $ for which $ \Center(\hat \g)\cap V_1(\hat\g) =\Center(\hat \g)\cap V_2(\hat\g)=\{0\}$ and $\dim (\Center(\hat \g)\cap V_3(\hat\g)) \leq 1 $. Since the center of a stratified Lie algebra is non-trivial and homogeneous (see \eqref{center:homog}), we deduce that $ \dim \Center(\hat\g) =\dim \Center(\hat \g)\cap V_3(\hat\g) = 1  $, proving that $ \hat \g$ is trimmed.
\end{proof}

In the rest of this section we provide some examples. We first show a 7-dimensional Lie algebra of step 3 that is of type $ (\Diamond) $ but that is not of type $ (\star) $, see Example~\ref{ex:engel*engel}. Then we provide a 6-dimensional Lie algebra that is semigenerated but not of type $ (\Diamond) $, see Example~\ref{ex:tame_not_necessary}.
\begin{example}\label{ex:engel*engel}
	Let $ \h_1,\h_2 $ be two copies of the four-dimensional Engel algebra $ \mathbb{En}^1 $ and consider their product Lie algebra $ \h_1\times \h_2 $. Denoting by $ Z_1 $ and $ Z_2 $ the generators of $ V_3(\h_1) $ and $ V_3(\h_2) $, respectively, and identifying $ \h_1 $ and $ \h_2 $ with the respective subalgebras of $ \h_1\times \h_2 $, we have that $ V_3(\h_1\times \h_2) = \spann{Z_1,Z_2} $. Then $ \R(Z_1-Z_2) $ is an ideal of $ \h_1\times\h_2 $ and the quotient algebra
	\[
	\g \coloneqq (\h_1 \times \h_2) / \R(Z_1-Z_2)
	\]
	is a 7-dimensional (trimmed) stratified Lie algebra of step 3 (which in the Gong's classification \cite[p.\ 57]{Gong_Thesis}
	is denoted by  (137A)). We claim that $ \g $ is of type $ (\Diamond) $ but it is not of type $ (\star) $. Indeed, the fact that $ \g $ is of type $ (\Diamond) $ follows immediately from Lemma~\ref{lemma:construct_diamonds} with $ \mathfrak i = \R(Z_1-Z_2) $, as now $ \pi_l(\mathfrak i)= V_3(\h_l) $ for both $ l=1 $ and $ l=2 $. 
	
	We argue next that $ \g $ is not of type $ (\star) $. Let $\{ X_1,\dots,X_7\} $ be a basis of $ \g $ 
	for which $ \{X_1,\dots,X_4\} $ is a basis of $ V_1(\g) $ and the only nonzero brackets are $ [X_1,X_2] = X_5, [X_3,X_4]= X_6 $ and $ [X_1,[X_1,X_2]] = [X_3,[X_3,X_4]]= X_7 $, as presented in the diagram below. Then for a vector $ Y \coloneqq \sum_{i=1}^4a_iX_i \in V_1$ we have, for instance, that
	\[
	\ad^2_Y(X_2) = a_1^2X_7.
	\] In particular,  if $ Y $ is such that $ \ad^2_Y(V_1) = 0$, then $ a_1 = 0 $. Consequently, the set of vectors $ Y\in V_1 $ satisfying $ \ad^2_Y(V_1) = 0$ is contained in a 3-dimensional subspace of $ V_1(\g) $. We conclude by Remark~\ref{rmk:equiv_type_star} that $ \g $ is not of type $ (\star) $.
	 
\begin{center}
	\begin{tikzcd}
		X_1 \arrow[rd, no head, end anchor = north] \arrow[rrdd, no head, bend right,  end anchor = {[xshift=-0.5ex]north}] & X_2                                        &   & X_3 \arrow[d, no head, end anchor = north] \arrow[ldd, no head, ->-=.5,end anchor = {[xshift=0.2ex]north}] & X_4 \\
		& X_5 \arrow[u, no head, start anchor = north] \arrow[rd, no head, end anchor = {[xshift=-0.5ex]north}] &   & X_6 \arrow[ru, no head, start anchor = north] \arrow[ld, no head, -<-=.4, end anchor = {[xshift=0.2ex]north}] &     \\
		&                                            & X_7 &                                             &    
	\end{tikzcd}
\end{center}
	
\end{example}
\begin{example}\label{ex:tame_not_necessary}
	Let $ \g $ be the 6-dimensional step-3 Lie algebra ($ N_{6,2,6} $ in \cite[p.\ 33]{Gong_Thesis}), where the only non-trivial brackets are given by
	\[
	[X_1,X_2]=X_4,\; [X_1,X_3]=X_5, \;[X_1,X_4]=[X_3,X_5]=X_6.
	\]
	The Lie brackets can be described by the following diagram:
\begin{center}
 
	\begin{tikzcd}[end anchor=north]
	 X_1\arrow[d, no head] \arrow[drr, no head] \arrow[ddr, no head, ->-=.6,end anchor={[xshift=-2.8ex]north east}]& X_2\arrow[dl,no head] & X_3 \arrow[d,no head]\arrow[ddl, no head, ->-=.6, end anchor={[xshift=-2.2ex]north east}] \\
	X_4\arrow[dr, no head, -<-=.4, end anchor={[xshift=-2.8ex]north east}]& & X_5\arrow[dl, no head, -<-=.4, end anchor={[xshift=-2.2ex]north east}] \\
	 &X_6& 
	\end{tikzcd}

\end{center} 
	This Carnot algebra $ \g $ is semigenerated but it is not of type $ (\Diamond) $. Indeed, to prove that it is semigenerated, let $ W \sus \g $ be a horizontal half-space and let us show that $ V_3 = \R X_6 \sus \edge{\s} $, where $ \s \coloneqq \Cl(\s_W) $. This would show that $ W $ is semigenerating by Lemma~\ref{lemma:step4_equiv_halfspace}.
	
	Suppose first that $ X_1\notin \partial W$. The rank of $\g$ is 3, so $ \partial W $ has dimension 2.
	Then there exist some $ a,b\in \R $ such that $ Y_2 \coloneqq X_2 - aX_1 $ and $ Y_3 \coloneqq X_3 - b X_1 $ form a basis for $\partial W $. Since $ \partial W \sus \edge{\s} $ and $ \edge{\s} $ is a Lie algebra by Lemma~\ref{lem:CH}, we obtain
	\[
	[Y_2,Y_3] = [X_2-aX_1,X_3-bX_1] = bX_4-aX_5 \in \edge{\s}.
	\]
	If $ a\neq0 $ or $ b\neq 0 $, we get
	\[
	V_3 \sus \Ideal([Y_2,Y_3]) \sus \edge\s,
	\]
	where the last inclusion comes from Lemma~\ref{lemma:step3_ideal(invV2)_is_inv}. If instead 
%
	 $ a = b = 0 $, then $ X_2 = Y_2 \in \edge{\s} $. Since $ \ad_{X_2}^2 X_1 = 0 $, by Lemma~\ref{lemma:new_invariant_directions} we get that $ [X_1,X_2] = X_4 \in  \edge{\s} $. Again, since $ V_3 \sus \Ideal(X_4) $, by Lemma~\ref{lemma:step3_ideal(invV2)_is_inv} we get that $ V_3 \sus \edge{\s} $.
	
	The cases $ X_2\notin \partial W $ and $ X_3\notin \partial W $ are easier: if $ X_2\notin \partial W $ we find, like above, some $ a,b\in \R $ such that $ \partial W = \spann{X_1-aX_2,X_3-bX_2} $. It then suffices to notice that $ X_6 \in \lie {X_1-aX_2,X_3-bX_2} \sus \edge{\s} $, for all choices of $ a,b \in \R $. Similarly, the case $ X_3 \notin \partial W $ follows from the fact that  $ X_6 \in \lie {X_1-aX_3,X_2-bX_3} $ for all $ a,b \in \R $. We conclude that $  \g $ is semigenerated.
	
	Finally, to justify that $ \g $ is not of type $ (\Diamond) $, observe that the span of $X_2$ and $X_3 $ is an abelian stratified subalgebra of $ \g $. If $ \g $ were type of $ (\Diamond) $, then by Remark~\ref{rmk:diamond+abelian_star} it would be of type $ (\star) $. However, similarly to Example~\ref{ex:engel*engel}, we have for an arbitrary element $ Y =a_1X_1+a_2X_2+a_3X_3 \in V_1 $ that
	\[
	\ad^2_Y(X_2) = a_1^2X_6.
	\]
	Hence vectors $ Y \in V_1 $ for which $ \ad^2_Y(V_1) = 0 $ must satisfy $ a_1=0 $, which proves the non-existence of type $ (\star) $-basis by Remark~\ref{rmk:equiv_type_star}.
\end{example} 

	\section{Engel-type algebras}\label{sec:Engel}
In the rest of the paper we concentrate on a family of Carnot algebras that we call of {Engel type}. These algebras can be constructed through an iterative process from the classical 4-dimensional Engel algebra. Similarly to the Engel algebra, every Engel-type algebra is trimmed and non-ample, as we shall show in Propositions~\ref{rmk:engels_are_minimal} and~\ref{prop:Engels:non-CNP}. A more subtle result is that, at least in step 3, the Engel-type algebras are the only Carnot algebras with these properties. For this last part, see Proposition~\ref{prop:minimal_nonCNP_step3_isEngeln}.
The proof of Theorem~\ref{t:nonCNP:char:step6} will then be straightforward. 
	\subsection{Definition and properties}
	\begin{defn}[Engel-type algebra $ \mathbb{En}^n $]
		\label{def:nthEngel}
		For each $ n\in \N $, we denote by $ \mathbb{En}^n $ and  call it the \emph{$n$-th Engel-type algebra} the $ 2(n+1) $-dimensional Lie algebra (of step 3 and rank $ n+1 $) with basis $ \{X,Y_i,T_i,Z\}_{i=1}^n $, where the only non-trivial brackets are given by
		\begin{equation}\label{eq:Engel_brackets}
		[Y_i,X]=T_i \quad \text{and} \quad  [Y_i,T_i] = Z \quad \forall i \inn.
		\end{equation}
	\end{defn}
	 The first two  Engel-type algebras are the following. The first is $ \mathbb{En}^{1} $, and it is commonly known as Engel algebra, see \cite{Bellettini-LeDonne}, 
	 and we represent it by the diagram below.
	 \begin{center}
    \begin{tikzcd}[end anchor=north]
    Y_1\ar[dr, no head]\ar[ddr, no head] & &X\ar[dl, no head]\\
    &T_1\ar[d, no head] & \\
    & Z &
    \end{tikzcd}
\end{center}
	 The second Engel-type algebra $ \mathbb{En}^{2} $  is the six-dimensional algebra $ N_{6,3,1a} $ in \cite[p.\ 135]{Gong_Thesis} and its diagram is presented below.

	\begin{center}
			\begin{tikzcd}[end anchor=north]
	Y_1\ar[d, no head]\ar[dddr, no head,->-=.5,  end anchor={[xshift=-2.2ex]north east},start anchor={[xshift=-1.7ex]south east}] &X\ar[dr, no head,  -<-=.5]\ar[no head, dl,]  &Y_2\ar[no head, d, ->-=.5]\ar[dddl, no head,  end anchor={[xshift=-1.6ex]north east}, start anchor={[xshift=-3.2ex]south east}]\\
	 T_1\ar[ddr, no head, -<-=.3, end anchor={[xshift=-2.2ex]north east}]& & T_2\ar[ddl, no head, end anchor={[xshift=-1.6ex]north east}]\\
	& & \\
	 &Z & 
	\end{tikzcd}
	\end{center}
	Each  Engel-type algebra is a Lie algebra (see the simple verification in Remark~\ref{rmk:Engel_is_Lie}) and admits a step-3 stratification:
	\begin{eqnarray*} V_1(\mathbb{En}^n) &\coloneqq& \mathrm{span}\{ X, Y_1,\dots,Y_n\},\\
		V_2(\mathbb{En}^n) &\coloneqq& \mathrm{span}\{ T_1,\dots,T_n\},\\
		V_3(\mathbb{En}^n) &\coloneqq& \mathrm{span}\{ Z\}.
	\end{eqnarray*}

	We shall also give another equivalent definition for $ \mathbb{En}^n $  in Proposition~\ref{prop:char_of_engels}. We first list some properties of such Lie algebras.

	\begin{remark}\label{rmk:Engel_is_Lie}
		Each  Engel-type algebra given by the brackets  \eqref{eq:Engel_brackets} is indeed a Lie algebra. 
		Namely, let us verify that the Jacobi identity is satisfied. Since the  basis has a natural stratification, it is enough to check the identity for triples in the set
		$$\{X, Y_1, \ldots, Y_n\}.$$
		Hence, we just consider the case $X, Y_i, Y_j$ or $Y_i, Y_j, Y_k$. In the first case, we have 
		\begin{align*}
			[X, [Y_i, Y_j]] + [Y_i, [Y_j,X]] + [ Y_j,[X, Y_i]]  &= 0  + [Y_i, T_j, ] + [ Y_j,-T_i] \\
			&= \delta_{ij} Z - \delta_{ij} Z = 0.
		\end{align*}
		In the second case, we have
		\[
		[Y_i, [Y_j,Y_k]] + [ Y_j,[Y_k, Y_i]]  +[Y_k, [Y_i, Y_j]] = 0+0+0= 0.
		\]
	\end{remark}

	\begin{lemma}[Properties of Engel-type algebras]
		\label{lemma:properties_of_engels}
		The $n$-th Engel-type algebra $ \mathbb{En}^n $  with a basis satisfying \eqref{eq:Engel_brackets} has the following properties.
		\begin{enumerate}[(i)]
			\item If $ n \geq 2 $, then $  \mathrm{span}\{Y_1,\dots,Y_n\}$ is the unique abelian $ n $-dimensional subspace of $ V_1(\mathbb{En}^n) $;
			\item the line $ \R X $ is the unique horizontal line satisfying $ [\R X,V_2]=\{0\} $;
			\item for every nonzero $ Y \in \mathrm{span}\{Y_1,\dots,Y_n\} $, we have
			\[
			\ad_{Y}^2(V_1) = \R Z.
			\]
		\end{enumerate}
	\end{lemma}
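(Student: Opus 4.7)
The proof of each item reduces to direct computations using the bracket relations $ [Y_i,X]=T_i $ and $ [Y_i,T_i]=Z $ (with all other brackets among the basis elements being zero). I will handle the three items separately.

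For item (i), I would take an $n$-dimensional subspace $ U \subseteq V_1(\mathbb{En}^n) $ and compute the bracket of two general elements $ u = aX + \sum b_iY_i $ and $ v = a'X + \sum b_i'Y_i $, which works out to $ [u,v] = \sum_i(a'b_i - ab_i')T_i $. Asking this to vanish identically amounts to the system $ a'b_i = ab_i' $ for every $ i $. I would then split into cases: if $ U\sus \mathrm{span}\{Y_1,\ldots,Y_n\} $, dimension forces equality; otherwise $ U $ contains some $ X + \sum c_iY_i $, in which case the system forces every other element of $ U $ to be proportional to it, so $ U $ is $1$-dimensional, contradicting $ n\geq 2 $.

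For item (ii), I note that $ [X,T_i]= 0 $ for every $ i $ because no bracket relation produces anything in degree $ 3 $ out of $ X $ and $ V_2 $, so $ \R X $ does commute with $ V_2 $. For the converse, I would take an arbitrary $ aX + \sum b_iY_i \in V_1 $, bracket it against each $ T_j $, and use $ [Y_i,T_j]=\delta_{ij}Z $ to obtain $ b_jZ $. The vanishing of all these forces $ b_j = 0 $ for every $ j $, so the element lies in $ \R X $.

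For item (iii), given $ Y = \sum c_iY_i $ nonzero, I would just compute: $ \ad_Y $ kills every $ Y_j $ and sends $ X $ to $ -\sum c_iT_i $, so $ \ad_Y(V_1) = \R\sum c_iT_i $. Applying $ \ad_Y $ once more gives $ \ad_Y^2(V_1) = \R\bigl[\sum c_jY_j,\sum c_iT_i\bigr] = \R\bigl(\sum_i c_i^2\bigr)Z $, which equals $ \R Z $ since $ Y\neq 0 $ forces $ \sum c_i^2 > 0 $.

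None of the three items should present a real obstacle; the only mildly delicate point is ensuring the case analysis in (i) exhausts all possibilities, and this is cleanly handled by the dimension argument together with the observation that two elements of $ U $ with different $X$-coefficients would force linear dependence coordinate by coordinate.
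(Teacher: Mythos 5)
Your proof is correct and proceeds by the same direct bracket computations as the paper: (i) a dimension/linear-independence argument ruling out abelian $n$-planes other than $\spann{Y_1,\dots,Y_n}$, (ii) bracketing against $T_j$ to read off the $Y_j$-coefficients, and (iii) the computation $\ad_Y^2 X=\big(\sum c_i^2\big)Z$. The only difference is cosmetic — the paper in (i) picks a nonzero $Y\in H\cap\spann{Y_1,\dots,Y_n}$ and shows $[Y,\nu]\neq 0$, while you show every element of $U$ is proportional to the chosen $w$ — and there is a harmless sign slip in your $\ad_Y X$.
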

	\begin{proof} 
		\begin{enumerate}[(i)]
			\item
			Obviously, the space $ \mathrm{span}(Y_1,\dots,Y_n)$ is an abelian $n$-space. Vice versa, 
				let $ H $ be an $ n $-dimensional subspace of $  V_1(\mathbb{En}^n) $ such that $ H \neq  \mathrm{span}(Y_1,\dots,Y_n) $. Then there exists $ \nu \in H $ of the form
			\[
			\nu \coloneqq X + \sum_{i=1}^{n} a_i Y_i, \quad \text{ with } a_i \in \R \; \text{ for }  \; i= 1,\dots,n,
			\]
			and a nonzero $ Y \in H \cap \mathrm{span}(Y_1,\dots,Y_n) $. Writing $Y = \sum_{i=1}^{n} b_iY_i $ for some $ b_i \in \R $, we obtain 
			\[
			[ Y ,\nu] = \sum_i b_i T_i \neq 0,
			\]
			proving that $ H $ is nonabelian.
			\item Let now
			\[
			\nu \coloneqq a X + \sum_{i=1}^{n} a_i Y_i, \quad a,a_i \in \R \; \forall \; i= 1,\dots,n,
			\]
			where $ a_k \neq 0 $ for some $ k \inn $. Then
			\[
			[\nu,T_k] = a_kZ \neq 0,
			\]
			which shows that $ [\nu,V_2] \neq 0$ if $ \nu \notin \R X $.
			\item Let again $ Y = \sum_{i=1}^{n} b_iY_i  \ $ for some real numbers $ b_i  $ not all identically zero. Since
			\[
			\ad_{Y}^2(\R X) = \R\, \ad_{Y}^2X = \R \sum_{i=1}^{n} b_i^2Z = \R Z
			\]
			and also $ V_3(\mathbb{En}^{n}) = \R Z$, we get
			\[
			\R Z = \ad_{Y}^2(\R X) \sus \ad_{Y}^2(V_1) \sus  \R Z.
			\qedhere\]
		\end{enumerate}
	\end{proof}

We provide next the automorphism group of the Engel-type algebras, which will be used later in the proof of Lemma~\ref{lemma:subalgs_of_engels} where we characterize all stratified subalgebras of the Engel-type algebras.
For the automorphism group of the Engel algebra, we refer to \cite[Lemma 2.3]{Bellettini-LeDonne}.
\begin{lemma}\label{rmk:automs_of_Engels}
	Consider the basis $ \{X,{Y}_1,\dots,{Y}_n \} $ of $ V_1(\mathbb{En}^{n}) $, $ n\geq 2 $, defined in \eqref{eq:Engel_brackets}. 
	Fix a scalar product $ \langle \cdot,\cdot \rangle $ on $ V_1(\mathbb{En}^{n}) $ that makes $ \{Y_1,\dots,Y_n\} $ orthonormal. Then every linear transformation on $ V_1(\mathbb{En}^{n}) $ that in the basis $ \{X,{Y}_1,\dots,{Y}_n \} $ is given by the block matrix
	\[
	\begin{pmatrix}
	a & 0 \\ 0 & b A
	\end{pmatrix}, \quad a,b \in \R \setminus\{0\} \ja A \in O(n),
	\]
	 induces a Lie algebra automorphism of $ \mathbb{En}^{n} $. Moreover, every automorphism of $ \mathbb{En}^{n} $ is induced by such a transformation on $ V_1(\mathbb{En}^{n}) $.
\end{lemma}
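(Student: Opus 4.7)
The plan is to prove the two implications of the lemma separately; both rest on translating the defining bracket relations \eqref{eq:Engel_brackets} into matrix identities on $V_1$.

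For the forward direction, given $a,b \neq 0$ and $A \in O(n)$, I define $\phi$ on $\mathbb{En}^{n}$ by
\[
\phi(X) \coloneqq aX, \quad \phi(Y_i) \coloneqq b\sum_k A_{ki} Y_k, \quad \phi(T_i) \coloneqq ab\sum_k A_{ki} T_k, \quad \phi(Z) \coloneqq ab^2 Z,
\]
extended linearly, and then verify that $\phi$ is a Lie algebra homomorphism by checking compatibility with the brackets in \eqref{eq:Engel_brackets}. The relations $\phi([Y_i,X]) = \phi(T_i)$ are immediate; the only subtle check is
\[
[\phi(Y_i), \phi(T_j)] = ab^2 \sum_{k,l} A_{ki} A_{lj} [Y_k, T_l] = ab^2 (A^\top A)_{ij} Z = ab^2 \delta_{ij} Z,
\]
which matches $\phi([Y_i,T_j]) = \delta_{ij}\phi(Z)$ precisely because $A^\top A = I$. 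All remaining brackets are zero on both sides, either by $[X,T_i]=0$, $[V_2,V_2]=0$, or by centrality of $V_3$. Invertibility follows from $a,b\neq 0$ and $\det A = \pm 1$.

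For the converse, let $\psi$ be an automorphism of $\mathbb{En}^{n}$. Since $\psi$ preserves the lower central series, it descends to a linear automorphism $\bar\psi$ of the abelianization, which I identify with $V_1$ via the canonical projection. I first show $\bar\psi$ has block form. Property (ii) of Lemma~\ref{lemma:properties_of_engels} characterizes $\R X$ as the unique line in $V_1$ whose elements annihilate $V_2$ under the induced bracket $V_1 \times V_2 \to V_3$; since $\bar\psi$ intertwines this bracket with its counterpart (after composing with the automorphisms induced on $V_2$ and $V_3$), the line $\R X$ is preserved, so $\bar\psi(X) = aX$ with $a\neq 0$. Property (i) (using $n\geq 2$) characterizes $\mathrm{span}\{Y_1,\ldots,Y_n\}$ as the unique $n$-dimensional subspace of $V_1$ on which the induced bracket $V_1\wedge V_1 \to V_2$ vanishes; hence it is preserved as well, giving $\bar\psi(Y_i) = \sum_k C_{ki} Y_k$ for an invertible $n\times n$ matrix $C$.

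It remains to show $C = bA$ with $A \in O(n)$. Comparing $\psi([Y_j, X])$ with $[\psi(Y_j), \psi(X)]$ and reducing modulo $V_3$, one obtains $\psi(T_j) = a\sum_k C_{kj} T_k + u_j$ with $u_j \in V_3$. Writing $\psi(Y_i) = \sum_k C_{ki} Y_k + W_i$ with $W_i \in V_2\oplus V_3$, the centrality of $V_3$ together with $[V_2, V_2]=0$ causes every correction term to drop out of the bracket, leaving
\[
[\psi(Y_i), \psi(T_j)] = a\sum_{k,l} C_{ki} C_{lj}[Y_k, T_l] = a(C^\top C)_{ij} Z.
\]
Comparing with $\psi([Y_i, T_j]) = \delta_{ij}\psi(Z) = \delta_{ij}\mu Z$ (where $\mu \neq 0$) forces $C^\top C = (\mu/a) I$, whence $C = bA$ with $b = \pm\sqrt{\mu/a}$ and $A \in O(n)$. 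The main obstacle I anticipate is the bookkeeping for the higher-layer corrections $W_i$ and $u_j$, but the step-$3$ structure guarantees these disappear from the final bracket formula.
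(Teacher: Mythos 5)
Your proof is correct and follows essentially the same route as the paper's: both reduce the block structure to Lemma~\ref{lemma:properties_of_engels}.(i) and (ii), and both derive the orthogonality constraint from the single bracket relation $[Y_i,[Y_j,X]]=\delta_{ij}Z$, via the computation $[\Phi(Y_i),[\Phi(Y_j),\Phi(X)]] = a\langle\Phi(Y_i),\Phi(Y_j)\rangle Z$ (your $a(C^\top C)_{ij}Z$ is exactly this in matrix notation). The paper treats both directions at once by arguing that, for a map fixing the two distinguished subspaces, preservation of that one relation is the only constraint left, whereas you spell out the two implications separately and write out the induced maps on $V_2$ and $V_3$ explicitly; you are also slightly more careful than the paper in passing to the abelianization and noting that a general automorphism need only preserve the lower central series, so that the higher-layer corrections $W_i,u_j$ must be shown to drop out. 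These are expository, not mathematical, differences.
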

\begin{proof}
	By Lemma~\ref{lemma:properties_of_engels}.(i) and (ii), every $ \Phi \in \mathrm{Aut}(\mathbb{En}^n) $ must fix the subspaces $ \spann{Y_1,\dots,Y_n} $ and $ \R X$. Moreover, notice that any linear map $ \Phi $ on $ V_1(\mathbb{En}^{n}) $ fixing these subspaces satisfies, for some $ a\neq 0 $, the two equalities:
	\begin{align*}
		[\Phi(Y_i),\Phi(X)] &= [\Phi(Y_i), aX] = \sum_{k=1}^{n}a \langle \Phi(Y_i), Y_k \rangle T_k \ja \\
		[\Phi(Y_i),T_k] &=
		[  \sum_{\ell=1}^{n} \langle \Phi(Y_i) ,Y_\ell \rangle Y_\ell,T_k] =
		 \langle \Phi(Y_i),Y_k \rangle Z.
	\end{align*}
	 Therefore, using again that   $\{Y_j\}_j$ are orthonormal, we deduce that
	\begin{equation}\label{eq:En_automs}
		[\Phi(Y_i),	[\Phi(Y_j),\Phi(X)]] =  \sum_{k=1}^{n}a \langle \Phi(Y_j), Y_k \rangle \langle \Phi(Y_i),Y_k \rangle Z = a \langle \Phi(Y_i),\Phi(Y_j) \rangle Z. 
	\end{equation}
Recall that the basis vectors of $ \mathbb{En}^{n} $ satisfy $ [Y_i,[Y_j,X]] = \delta_{ij}Z $. Therefore, the map $ \Phi $ induces a Lie algebra automorphism of $ \mathbb{En}^{n} $ if and only if there exists some $ b\neq 0 $ such that
 \begin{equation*}
 	[\Phi(Y_i),	[\Phi(Y_j),\Phi(X)]] = b\, \delta_{ij}Z, \quad\,\forall i,j \inn
 \end{equation*}
 According to \eqref{eq:En_automs}, this is equivalent to saying that the map $ \Phi $ is, up to scaling, an orthogonal transformation on $ \spann{Y_1,\dots,Y_n} $ .
\end{proof}
	For a good understanding of  the rest of this section, we stress that we say that a subalgebra of a stratified Lie algebra is stratified if it is homogeneous and stratified with respect to the induced grading.
	\begin{lemma}\label{lemma:subalgs_of_engels}
		Let $ 1\leq k \leq n  $.
		 If   $ \h $ is a stratified rank-$ k $ subalgebra of the 
		 $n$-th Engel-type algebra $ \mathbb{En}^{n} $, then either $ \h $ is abelian or it is isomorphic to $ \mathbb{En}^{k-1} $.
	\end{lemma}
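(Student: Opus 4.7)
My plan is to split on whether $W \coloneqq \h \cap V_1$ is contained in the subspace $\spann{Y_1, \ldots, Y_n}$. Since $\h$ is stratified, it is generated as a Lie algebra by $W$, and $\dim W = k$. In the first case, Lemma~\ref{lemma:properties_of_engels}.(i) says that $\spann{Y_1,\ldots,Y_n}$ is abelian, hence so is $W$, and consequently $\h = \lie{W} = W$ is abelian, yielding the first alternative of the statement.

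Suppose instead that $W \not\sus \spann{Y_1, \ldots, Y_n}$. Then $H \coloneqq W \cap \spann{Y_1, \ldots, Y_n}$ has codimension one in $W$, i.e., $\dim H = k-1$. I plan to use Lemma~\ref{rmk:automs_of_Engels} to apply an automorphism of $\mathbb{En}^n$ that sends $H$ to $\spann{Y_1, \ldots, Y_{k-1}}$ and, simultaneously, sends a chosen complement of $H$ in $W$ to a line of the form $\R(X + cY_k)$ for some $c \in \R$. Concretely, I pick an orthonormal basis of $H$ (with respect to the scalar product of Lemma~\ref{rmk:automs_of_Engels}) and extend it to an orthonormal basis of $\spann{Y_1,\ldots,Y_n}$ whose $k$-th vector is aligned with the $H^\perp$-component of a chosen complement; using the $O(n)$-factor together with the rescaling of $X$ allowed by Lemma~\ref{rmk:automs_of_Engels}, and subtracting an element of $H \subset \h$, the complement is brought to the desired form $\R(X+cY_k)$.

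With the normalized basis $\{\tilde X, Y_1, \ldots, Y_{k-1}\}$ of $W$, where $\tilde X = X + cY_k$, I will verify the Engel-type relations by direct computation: using that $\spann{Y_1, \ldots, Y_n}$ is abelian and that $[Y_k, T_i] = 0$ for $i \neq k$, for each $i = 1, \ldots, k-1$ one obtains
\begin{equation*}
[Y_i, \tilde X] = T_i, \qquad [Y_i, T_i] = Z, \qquad [\tilde X, T_i] = c[Y_k, T_i] = 0,
\end{equation*}
which, together with the trivial brackets between the $Y_i$'s and among the $T_i$'s, are precisely the defining relations of $\mathbb{En}^{k-1}$. Hence the assignment $\tilde X \mapsto X$, $Y_i \mapsto Y_i$, $T_i \mapsto T_i$, $Z \mapsto Z$ extends to the desired isomorphism $\h \cong \mathbb{En}^{k-1}$. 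The main subtlety is that the automorphism group of $\mathbb{En}^n$ does not permit one to kill the residual parameter $c$ in general; the key observation is that $c$ drops out of every relevant bracket, so the isomorphism type of $\h$ is unaffected.
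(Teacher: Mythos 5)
Your proof is correct, and it takes a mildly different route than the paper's. Both arguments normalize $V_1(\h)$ using the automorphism group of $\mathbb{En}^n$ (Lemma~\ref{rmk:automs_of_Engels}) and exploit the fact that the residual parameter (your $c$, the paper's $a$) is invisible to the brackets. The paper, however, first handles only the case $k=n$ (where $V_1(\h)$ is a hyperplane, so after removing the $H$-part the complement is forced to the form $\R(X+aY_n)$ with one remaining coordinate), and then deals with $k<n$ indirectly, by building a filtration $\h_k\subset\h_{k+1}\subset\cdots\subset\h_n$ of non-abelian subalgebras of increasing rank and iterating the $k=n$ result downward along the chain. You avoid the filtration entirely: after using the $O(n)$-freedom to carry $H$ onto $\spann{Y_1,\ldots,Y_{k-1}}$ and the $H^\perp$-component of the transverse direction onto $\R Y_k$, you get the same normal form for general $k$ in one pass, and then the bracket computation $[\tilde X,T_i]=c[Y_k,T_i]=0$ (valid since $i\le k-1<k$) shows directly that $\h\cong\mathbb{En}^{k-1}$. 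This is arguably cleaner than the paper's telescoping argument, which is left somewhat implicit, at the price of a slightly more careful choice of orthonormal basis. One small point worth making explicit in your write-up: for $k=1$ the second alternative degenerates (there are no $Y_i$ left), and $\h$ is a line, hence abelian, so the dichotomy is vacuously satisfied; similarly, for $n=1$ only $k=1$ can occur, which the paper dispatches separately as a trivial base case.
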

\begin{proof}
	If $ n=1 $, the claim is trivially true. Assume then that $ n\geq 2 $ and let $ \{X,Y_1,\dots,Y_n \} $ be a basis of $ V_1(\mathbb{En}^{n}) $ as in \eqref{eq:Engel_brackets}.
	We start by proving the case $ k= n $. Assume first that $ \h $ is the subalgebra generated by $ \{X+aY_n,Y_1,\dots,Y_{n-1} \} $ for some $ a\in \R $. Observe that then $ \h $ is isomorphic to $ \mathbb{En}^{n-1} $ for every value of $ a\in \R $ since, for each $ i= 1,\dots,n-1 $, we have that
	\[
	[Y_i,X+aY_n] = [Y_i,X] = T_i \ja  [X+aY_n,T_i] = [X,T_i] = 0.
	\]
	Recall that, by Lemma~\ref{lemma:properties_of_engels}.i, the subspace $ \spann{Y_1,\dots,Y_n} $ is the unique abelian stratified subalgebra of $ \mathbb{En}^{n} $.	
	Since every $ n $-dimensional subspace of $ V_1(\mathbb{En}^{n}) $ which is not equal to $ \spann{Y_1,\dots,Y_n} $ can be realized from some subspace $ \spann{X+aY_n,Y_1,\dots,Y_{n-1}} $, $ a\in \R $, by a rotation of $ V_1(\mathbb{En}^{n}) $ around the $ X $-axis, we infer by Lemma~\ref{rmk:automs_of_Engels} that any subalgebra generated by such subspace is isomorphic to $ \mathbb{En}^{n-1} $.
	
	Regarding the case $ k<n $, fix a non-abelian stratified rank-$ k $ subalgebra $ \h_k $ of $ \mathbb{En}^{n} $. Then we find a filtration $ \h_k \subset \h_{k+1} \subset \dots \subset  \h_n \subset \mathbb{En}^{n}$ of non-abelian stratified rank-$ l $ subalgebras $ \h_l $, $ l=k+1,\dots,n $. The claim follows now by the first part of this proof.
\end{proof}
There are plenty of Lie algebras of arbitrarily large step 
whose first three layers coincide with the
  first Engel-type algebra, for example, the filiform algebras. The same phenomenon does not happen for the other Engel-type algebras.
Since we need this latter fact in the proof of Proposition~\ref{prop:char_of_engels}, we clarify such a phenomenon in the next remark.
\begin{remark}\label{rmk:Engels_not_prolognable}
	For each $ n\geq 2 $, the Lie algebra $ \mathbb{En}^n $  cannot be `prolonged' in the following sense:  if $ \g $ is a stratified Lie algebra for which $ \g / \g^{(4)} $ is isomorphic to $ \mathbb{En}^n $ for some $ n\geq 2 $, where  $ \g^{(4)}  = V_4 \oplus \dots \oplus V_s$, then $ \g = \mathbb{En}^n $. 
	
\end{remark}
\begin{proof}
	Let $ \{X,Y_i \}_{i=1}^n  $, $ \{T_i \}_{i=1}^n  $ and $ \{Z \} $ be bases of $ V_1(\g) $, $ V_2(\g) $  and $ V_3(\g) $, respectively, satisfying the bracket relations \eqref{eq:Engel_brackets}  modulo  $\g^{(4)}  $.
	Observe that, being $ \g $ stratified, the subspace $ V_4 $ is spanned by elements of the form $ [\nu,Z] $, where $ \nu \in \{X,Y_i \}_{i=1}^n$. We need to show  that $ V_4= \{0\} $. Indeed, by the Jacobi identity, we have 
	\begin{align*}
	[X,Z] = [X,[Y_i,[Y_i,X]]] =  -[Y_i,[[Y_i,X],X]] - [[Y_i,X],[X,Y_i]] = 0,
	\end{align*} where we deduced that $ [Y_i,[[Y_i,X],X]] = 0 $, since $ [X,V_2] = \{0\} $ by Lemma~\ref{lemma:properties_of_engels}.ii. Moreover, for every $ j= 1,\dots,n $ and $ i\neq j $ (which exists since $n\geq 2$) one has
	\begin{align*}
	[Y_j,Z] = [Y_j,[Y_i,[Y_i,X]]] =  -[Y_i,[[Y_i,X],Y_j]] - [[Y_i,X],[Y_j,Y_i]] = 0 ,
	\end{align*} where we used that $ [Y_j,[Y_i,X]] = [Y_j,T_i] = 0 $ and that $ [Y_j,Y_i]=0 $. This proves that $ V_4= \{0\} $, in which case also $ \g^{(4)} = \{0\} $ and hence $\g \cong \g / \g^{(4)} \cong  \mathbb{En}^n $.
\end{proof}
	
	The Engel-type algebras have the following equivalent definition using induction.
	
	\begin{prop}[A characterization of Engel-type algebras]
		\label{prop:char_of_engels}
		Let $\g$ be a stratified Lie algebra of rank $n+1\geq4$. Then $\g$ is isomorphic to 
		$ \mathbb{En}^{n} $ if and only if $\g$ has a unique abelian stratified subalgebra of rank $n$ and every other stratified subalgebra of rank $n$ is isomorphic to 
		$ \mathbb{En}^{n-1} $. Moreover, this characterization holds for rank $ n+1=3 $ if in addition $ \dim V_3(\g) = 1 $.
	\end{prop}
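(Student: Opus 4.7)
The forward implication follows from the preceding lemmas. If $\g\cong\mathbb{En}^n$ then Lemma~\ref{lemma:properties_of_engels}.(i) (valid since $n\geq 2$) identifies $\spann{Y_1,\ldots,Y_n}$ as the unique abelian stratified rank-$n$ subalgebra, Lemma~\ref{lemma:subalgs_of_engels} with $k=n$ gives that every other stratified rank-$n$ subalgebra is isomorphic to $\mathbb{En}^{n-1}$, and $\dim V_3(\mathbb{En}^2)=1$ holds by construction, handling the rank-$3$ extra hypothesis.

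For the reverse implication, let $A$ denote the unique abelian rank-$n$ subalgebra of $\g$, fix a basis $\{Y_1,\ldots,Y_n\}$ of $A$, pick $X\in V_1(\g)\setminus A$, and set $T_i\coloneqq[Y_i,X]$. Since $[A,A]=0$ one has $V_2(\g)=\spann{T_1,\ldots,T_n}$. My plan is to rebuild the bracket table of $\mathbb{En}^n$ on $V_1\oplus V_2\oplus V_3$ in a suitably adjusted basis and then kill the higher layers using Remark~\ref{rmk:Engels_not_prolognable}. First I would show that $T_1,\ldots,T_n$ are linearly independent: each hyperplane $H_k\coloneqq\R X\oplus\spann{Y_j:j\neq k}$ must be non-abelian by uniqueness of $A$, so $\lie{H_k}\cong\mathbb{En}^{n-1}$ and $V_2(\lie{H_k})=\spann{T_j:j\neq k}$ has dimension exactly $n-1$. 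A nontrivial relation $\sum c_iT_i=0$ would then have all $c_i\neq 0$; setting $Y\coloneqq\sum c_iY_i$, the equality $[Y,X]=0$ yields, for $n=2$, a second abelian rank-$2$ subalgebra $\spann{X,Y}\neq A$, and, for $n\geq 3$, a hyperplane $\R X\oplus B$ with $Y\in B$ whose generated (necessarily non-abelian) subalgebra has $V_2$ of dimension at most $n-2$, contradicting $\lie{\R X\oplus B}\cong\mathbb{En}^{n-1}$.

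Next I would show $V_3(\g)=\R Z$. For $n=2$ this is the extra hypothesis. For $n\geq 3$, each $V_3(\lie{H_k})=\R Z_k$ is one-dimensional, and for $k\neq k'$ the intersection $\lie{H_k\cap H_{k'}}$ has rank $n-1$ and is non-abelian (as $T_j\neq 0$ for some $j\notin\{k,k'\}$), so by Lemma~\ref{lemma:subalgs_of_engels} applied inside $\lie{H_k}$ it is isomorphic to $\mathbb{En}^{n-2}$; its one-dimensional $V_3$ sits inside $\R Z_k\cap\R Z_{k'}$ and forces $\R Z_k=\R Z_{k'}\eqqcolon\R Z$. Every generator $[X,T_i]$ or $[Y_j,T_i]$ of $V_3(\g)=[V_1,V_2]$ lies in $V_3(\lie{H_k})=\R Z$ for a suitable $k\notin\{i,j\}$, whence $V_3(\g)=\R Z$. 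Writing $[Y_i,T_j]=c_{ij}Z$ and $[X,T_i]=d_iZ$, the Jacobi identity together with $[Y_i,Y_j]=0$ yields $c_{ij}=c_{ji}$.

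The central step is to show that the symmetric form $C=(c_{ij})$ on $A$ is definite. For any nonzero $Y\in A$, including $Y$ in some $(n-1)$-dimensional subspace $B\subset A$ and invoking $\lie{\R X\oplus B}\cong\mathbb{En}^{n-1}$ together with Lemma~\ref{lemma:properties_of_engels}.(iii) gives $[Y,[Y,X]]\neq 0$, i.e., $C(Y,Y)\neq 0$. Thus $C$ is anisotropic on $A$, and over $\R$ this forces $C$ to be definite; flipping the sign of $Z$ if necessary, positive definite. A change of basis of $A$ together with a rescaling puts $C$ into the identity form, and the substitution $X\mapsto X+\sum a_jY_j$ with $\vec a=-C^{-1}\vec d$ (well-defined by invertibility of $C$) annihilates all $d_i$ while leaving the $T_i$ untouched. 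In this adjusted basis the brackets of $\g$ up to step $3$ coincide with those defining $\mathbb{En}^n$, so $\g/\g^{(4)}\cong\mathbb{En}^n$; Remark~\ref{rmk:Engels_not_prolognable} (applicable since $n\geq 2$) then gives $\g^{(4)}=0$, and hence $\g\cong\mathbb{En}^n$. The main obstacle is propagating the one-dimensionality of $V_3(\g)$ across the various non-abelian hyperplanes in rank $\geq 4$ via the common $\mathbb{En}^{n-2}$ subalgebras inside neighbouring $\mathbb{En}^{n-1}$'s; the need for the extra hypothesis in rank $3$ reflects the absence of this comparison mechanism.
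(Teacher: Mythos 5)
Your proof is genuinely different from the paper's. The paper constructs the defining basis incrementally: it fixes a rank-$n$ non-abelian subalgebra $\h\cong\mathbb{En}^{n-1}$ together with an Engel-type basis, then adds a single vector $Y_n=a(\hat Y_n+\sum a_iY_i)$ from the abelian subalgebra $\h_0$, adjusting the parameters $a,a_i$ separately to kill the brackets $[Y_n,T_i]$, normalize $[Y_n,[Y_n,X]]=Z$, and ensure linear independence of $T_n$ (conditions (i)--(iii) in the paper). You instead start from the global data $(A,X)$, introduce the symmetric form $C(Y_i,Y_j)=[Y_i,[Y_j,X]]/Z$ and the vector $\vec d$, show that $C$ is anisotropic hence definite over $\R$, and then put the bracket table into standard form with a Gram-type change of basis of $A$ plus the affine shift $X\mapsto X-C^{-1}\vec d\cdot Y$. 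This is more structural: it isolates the quadratic form $C$ as the isomorphism invariant on the abelian piece, and the Gram normalization replaces the paper's case-by-case parameter adjustment in a single conceptually cleaner step. Your deduction of $\dim V_3=1$ (for $n\geq3$) via the specific hyperplanes $H_k$ and the common $\mathbb{En}^{n-2}$ subalgebra $\lie{H_k\cap H_{k'}}$ is also simpler than the paper's comparison between two arbitrary rank-$n$ subalgebras $\h_1,\h_2$ (which requires the auxiliary claim about $(n-1)$-dimensional abelian subspaces and a case split on whether $V_1(\h_1)\cap V_1(\h_2)$ is abelian). Both arguments ultimately go through Lemma~\ref{lemma:subalgs_of_engels}, Lemma~\ref{lemma:properties_of_engels} and Remark~\ref{rmk:Engels_not_prolognable}, and both are correct for $n+1\geq4$.

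There is, however, a gap in the rank-$3$ addendum, and it is worth noting that the paper's own proof has the identical defect. Your anisotropy step deduces $C(Y,Y)\neq0$ by identifying $Y$ with a ``$Y$-type'' element in $\lie{\R X\oplus B}\cong\mathbb{En}^{n-1}$ and invoking Lemma~\ref{lemma:properties_of_engels}.(iii); this identification rests on $B$ being the \emph{unique} abelian $(n-1)$-dimensional subspace of $V_1(\mathbb{En}^{n-1})$, which by Lemma~\ref{lemma:properties_of_engels}.(i) requires $n-1\geq2$. When $n=2$ the subalgebra $\lie{\R X\oplus\R Y}$ is $\mathbb{En}^1$, in which \emph{every} horizontal line is abelian, and one cannot tell whether $Y$ corresponds to the vector with $\ad^2\neq0$ or to the special vector $X'$ with $\ad_{X'}^2=0$. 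In fact the six-dimensional algebra with brackets $[Y_1,X]=T_1$, $[Y_2,X]=T_2$, $[Y_2,T_2]=Z$, $[X,T_1]=Z$ (all other brackets zero) has $\spann{Y_1,Y_2}$ as its unique abelian rank-$2$ subalgebra, every other rank-$2$ subalgebra isomorphic to $\mathbb{En}^1$, and $\dim V_3=1$; yet $\ad_{Y_1}^2V_1=0$, so $C$ degenerates and the algebra is not $\mathbb{En}^2$. The paper's corresponding step, the assertion ``$\h_0\cap\h=\spann{Y_1,\ldots,Y_{n-1}}$ by Lemma~\ref{lemma:properties_of_engels}.(i)'', breaks in exactly the same way. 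You flag the rank-$3$ case as the point where the comparison mechanism for $\dim V_3=1$ fails, but do not notice that the anisotropy of $C$ is a second, independent point of failure; neither you nor the paper resolves it.
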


		\begin{proof}
		One direction is proven in Lemmata~\ref{lemma:properties_of_engels}.i and~\ref{lemma:subalgs_of_engels}. Regarding the other direction, let $ \g $ be a rank $ n+1 $ stratified Lie algebra, with $ n+1 \geq 3 $, which has a unique abelian stratified subalgebra $ \h_0 $ of rank $ n $ and every other rank-$ n $ stratified subalgebra is isomorphic to $ \mathbb{En}^{n-1} $. 
		
		Our first aim is to show that the condition $ \dim V_3(\g) = 1 $ always holds as long as    $n+1\geq 4$.
		We start by claiming the following property:
			\begin{equation}\label{claim:abelian_subsp}
			\text{If }n+1\geq 4 \text{ and }\mathfrak l\subset V_1(\g) \text{ is an }(n-1)\text{-dimensional abelian subspace of }V_1(\g),\text{ then }\mathfrak l \subset \h_0.
			\end{equation}
			Indeed, let $ \nu \in  V_1(\g) \setminus \mathfrak l $. On the one hand, if  $ \mathfrak l \oplus \R \nu $ is an abelian subalgebra of $ \g $, then  $  \mathfrak l \oplus \R\nu = \h_0  $ by uniqueness of $ \h_0 $ and so $ \mathfrak l \subset \h_0 $.  On the other hand, if $ \mathfrak l \oplus \R \nu $ generates a nonabelian subalgebra, then $ \mathfrak l \oplus \R \nu $ is isomorphic to $ V_1(\mathbb{En}^{n-1}) $, where $ n-1\geq 2 $. Since $ \h_0 \cap  (\mathfrak l \oplus \R\nu) $ is an abelian $ (n-1)$-dimensional subspace of $V_1( \mathbb{En}^{n-1}) $, we deduce that $ \h_0 \cap  (\mathfrak l \oplus \R\nu ) = \mathfrak l$ by uniqueness of $ (n-1) $-dimensional abelian subspaces of $V_1( \mathbb{En}^{n-1}) $ because $n-1\geq 2$ (see Lemma~\ref{lemma:properties_of_engels}.i). Then again $ \mathfrak l \subset \h_0 $ and \eqref{claim:abelian_subsp} is proven.
			
			Recall that
			\[
			V_3(\g) = \mathrm{span}\{[X_1,[X_2,X_3]] \mid X_i \in V_1,\; i=1,2,3 \}
			\]
			as $ \g $ is stratified. We are going to show that vectors $ [X_1,[X_2,X_3]] $ and  $ [\widetilde X_1,[\widetilde X_2,\widetilde X_3]] $ are linearly dependent, for every choice of vectors $ X_i,\widetilde X_i \in V_1$,  $ i=1,2,3 $. So let $ X_i,\widetilde X_i \in V_1$ for $ i=1,2,3 $ be such that $ [X_1,[X_2,X_3]] $ and $ [\widetilde X_1,[\widetilde X_2,\widetilde X_3]] $ are nonzero and let $ \h_1  \supseteq \{X_1,X_2,X_3\} $ and $ \h_2 \supseteq \{\widetilde X_1,\widetilde X_2,\widetilde X_3\} $ be rank-$ n $ subalgebras of $ \g $. Since $ \h_1 $ and $ \h_2 $ have nonzero third layers, they are isomorphic to $ \mathbb{En}^{n-1} $. We may assume that $ V_1(\h_1)\neq V_1(\h_2) $, since otherwise $ [X_1,[X_2,X_3]] $ and $[\widetilde X_1,[\widetilde X_2,\widetilde X_3]] $ are linearly dependent. Hence, being it the intersection of two different hyperplanes,  the space $ V_1(\h_1)\cap V_1(\h_2) $ is a codimension 2 subspace of $ V_1(\g) $, {i.e.}, it has dimension $ n-1 $.
			 
			To prove that $\dim V_3(\g)= 1 $, we have to show that 
			$V_3(\h_1) = V_3(\h_2)$, for all such $\h_1$ and $\h_2$ as above.
			The proof for the latter fact  
			is divided into two cases depending on whether $ V_1(\h_1)\cap V_1(\h_2) $   is closed under brackets (or equivalently, it is an abelian subalgebra) or not. Assume first that $ V_1(\h_1)\cap V_1(\h_2) $ forms an abelian subalgebra. Then by claim \eqref{claim:abelian_subsp} we have that $ V_1(\h_1)\cap V_1(\h_2)\sus \h_0 $. Let us fix a basis $ \{Z_1,\dots,Z_{n-1} \} $ for $ V_1(\h_1)\cap V_1(\h_2) $ and let also $ Z_n \in\h_0 $ and $ X\in V_1(\g) $ be such that $ \{Z_1,\dots,Z_n,X \} $ is a basis of $ V_1(\g) $. Fix next $ Y_i \in V_1(\h_i)\setminus( V_1(\h_1)\cap V_1(\h_2) )$ for $ i=1,2 $ and write it in terms of this basis as
			\[
			Y_i = a_i X + \sum_{j=1}^{n}b_j^i Z_j
			\]
			for some $ a_i, b_j^i \in \R $.
			Notice that, since $\h_i$ is not abelian, we have
			 $ a_i \neq 0 $.
			Since now $ \{Y_i,Z_1,\dots,Z_{n-1} \} $ is a basis of $ V_1(\h_i) $ for $ i=1,2 $ and since $ \h_0 = \spann{Z_1,\dots,Z_n } $ is abelian, we obtain 
			\begin{align*}
			V_3(\h_1) = \ad_{Z_1}^2(V_1(\h_1)) = \ad_{Z_1}^2(\R Y_1) =  \ad_{Z_1}^2(\R X) =  \ad_{Z_1}^2(\R Y_2) = \ad_{Z_1}^2(V_1(\h_2))=V_3(\h_2),
			\end{align*}
			where the first and the last equality follow from Lemma~\ref{lemma:properties_of_engels} (iii).
			Since the third layers of $ \h_1 $ and $ \h_2 $ are one dimensional, we deduce that $ [X_1,[X_2,X_3]] $ and $[\widetilde X_1,[\widetilde X_2,\widetilde X_3]] $ are linearly dependent.
			
		 Assume instead that $ V_1(\h_1)\cap V_1(\h_2) $ is not a subalgebra. Then, by Lemma~\ref{lemma:subalgs_of_engels}, the Lie algebra generated by $ V_1(\h_1)\cap V_1(\h_2) $ is isomorphic to $ \mathbb{En}^{n-2} $. In particular, it has step 3. Exploiting again the fact $ \dim V_3(\h_1) = \dim V_3(\h_2) =1 $, we get that
			\[
			V_3(\h_1) = V_3(\lie{ V_1(\h_1)\cap V_1(\h_2)}) = 	V_3(\h_2).
			\]
			Therefore  $ [X_1,[X_2,X_3]] $ and $[\widetilde X_1,[\widetilde X_2,\widetilde X_3]]$ are linearly dependent as in the previous case. This concludes the proof for the fact that $ \dim V_3(\g) = 1 $ when $ n+1 \geq 4 $. Hence, in what follows we may assume that  $ V_3(\g) $ is one dimensional, and that $n+1\geq 3$.
		
		In the rest of this proof we are going to construct a basis of $ \g $ that satisfies the defining commutator relations \eqref{eq:Engel_brackets} of the Engel-type algebra $ \mathbb{En}^n $. Let $ \h \cong \mathbb{En}^{n-1}$ be some nonabelian rank-$ n $ subalgebra of $ \g $ and let $ \{X,Y_i,T_i,Z \}_{i=1}^{n-1} $ be a basis of $ \h $ satisfying relations \eqref{eq:Engel_brackets}. We aim to find a vector $ Y_n \in V_1 $ which together with $T_n \coloneqq [Y_n,X] $ completes  $ \{X,Y_i,T_i,Z \}_{i=1}^{n-1} $ to the defining basis of $ \mathbb{En}^{n} $. Since $ \mathbb{En}^{n} $ cannot be prolonged (see Remark~\ref{rmk:Engels_not_prolognable}), this is enough to prove that $ \g $ is isomorphic to $ \mathbb{En}^{n} $. Notice that $ \h_0\cap \h = \spann{Y_1,\dots,Y_{n-1}} $ since $ \spann{Y_1,\dots,Y_{n-1}} $ is the unique abelian subspace of $ V_1(\h) $ by Lemma~\ref{lemma:properties_of_engels} (i). Moreover, $ V_3(\g) = \R Z $ as $ V_3(\g) $ is one dimensional. Fix next $ \hat Y_n\in \h_0 \setminus \h $ and write
			\[
			Y_n \coloneqq a\left (\hat{Y}_n + \sum_{i=1}^{n-1}a_iY_i  \right ) \in \h_0,
			\]
			where $a, a_i \in \R $, $ i= 1,\dots,n-1 $, are values to be determined later. Now  whenever $ a \neq 0 $ we have that $ \spann{Y_1,\dots,Y_n} =\h_0$ is abelian and $ \{Y_1,\dots,Y_n,X\} $ is a basis of $ V_1(\g) $. To conclude the proof of the proposition, we claim that it suffices to show that
			\begin{enumerate}[(i)]
				\item there exist $ a_i \in \R $, $ i= 1,\dots,n-1 $, such that $ [Y_n,T_i]=0 $ for all $ i= 1,\dots,n-1 $;
				\item there exists $ a\in \R $ such that $ [Y_n,[Y_n,X]]=Z $;
				\item with the above choices of $ a_i,a \in \R $, the vector $ T_n\coloneqq[Y_n,X] $ is linearly independent of $ T_1,\dots,T_{n-1} $.
			\end{enumerate}
Indeed, we stress again that, by	Remark~\ref{rmk:Engels_not_prolognable}, the Lie algebra		 $ \mathbb{En}^{n} $ cannot be prolonged and hence we indeed have that the set $ \{X,Y_i,T_i,Z \}_{i=1}^{n-1} $ is a basis of a step-3 Lie algebra isomorphic to $ \mathbb{En}^{n} $.
			
			To show (i), observe that for every $ i= 1,\dots,n-1 $,
			\[
			[Y_n,T_i] = a([\hat Y_n,T_i] + a_iZ).
			\]
			Since $ V_3(\g) $ is one dimensional, the two vectors $ [\hat Y_n,T_i]  $ and $ Z $ are linearly dependent. Hence for every $ i= 1,\dots,n-1 $ there exists $ a_i \in \R $ such that $ [Y_n,T_i] = 0 $, which proves (i).
			
			Regarding (ii), let then $ \h' \coloneqq \lie{Y_2,\dots,Y_n,X} \cong \mathbb{En}^{n-1} $. Since $ \spann{Y_2,\dots,Y_n} $ is again the unique abelian $ (n-1) $-dimensional subspace of $ V_1(\h') $, it holds
			\[
			[Y_n,[Y_n,\R X]] = [Y_n,[Y_n,V_1(\h')]].
			\]
			As $ [Y_n,[Y_n,V_1(\h')]]\neq 0 $ by Lemma~\ref{lemma:properties_of_engels} (iii) and since $ V_3(\g) = \R Z $, we may choose $ a\in \R $ such that
			\[
			[Y_n,[Y_n,X]]= Z.
			\]
			Thus (ii) is proven.
			
			Regarding (iii), it is enough to notice that from (i) we have
			\[
			[Y_n,\sum_{i=1}^{n-1}b_iT_i] = \sum_{i=1}^{n-1}b_i[Y_n,T_i] = 0,
			\]
			for every choice of $ b_i \in \R $, $ i= 1,\dots,n-1 $, whereas from (ii) we have
			\[
			[Y_n, T_n] = [Y_n,[Y_n,X]]\neq 0.
			\]
			This finishes the proof of (iii) and  of the proposition.
	 \end{proof} 
 	Next we show that each Lie algebra $ \mathbb{En}^n $ is trimmed and that a horizontal half-space $ W $ is, for $ n\geq 2 $, not ample if and only if $ \partial W $ is the abelian codimension 1 subspace of $ V_1 $. Recall our definition of trimmed   algebra  from Proposition~\ref{lemma:minimal_TFAE}.
 \begin{proposition}\label{rmk:engels_are_minimal}
 	Every Engel-type algebra is trimmed.
 \end{proposition}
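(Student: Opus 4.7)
The plan is to invoke the equivalent condition (c) from Proposition~\ref{lemma:minimal_TFAE}: it suffices to show that $\dim \Center(\mathbb{En}^n) = 1$. By the homogeneity of the center of a stratified Lie algebra recorded in \eqref{center:homog}, the center decomposes as
\[
\Center(\mathbb{En}^n) = \bigl(\Center(\mathbb{En}^n)\cap V_1\bigr) \oplus \bigl(\Center(\mathbb{En}^n)\cap V_2\bigr) \oplus \bigl(\Center(\mathbb{En}^n)\cap V_3\bigr),
\]
so I can analyze each layer separately.

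Since $\mathbb{En}^n$ has step $3$, the third layer $V_3 = \R Z$ is automatically central, giving $\Center(\mathbb{En}^n)\cap V_3 = \R Z$. For the first layer, I take a generic $\nu = aX + \sum_{i=1}^n b_i Y_i \in V_1$ and compute $[\nu, X] = \sum_{i=1}^n b_i[Y_i,X] = \sum_{i=1}^n b_i T_i$; since $\{T_1,\ldots,T_n\}$ is linearly independent in $V_2$, requiring this to vanish forces $b_i=0$ for all $i$. Then $[\nu, Y_j] = -aT_j$, which vanishes only if $a=0$. Hence $\Center(\mathbb{En}^n)\cap V_1 = \{0\}$. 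For the second layer, I take $\mu = \sum_{i=1}^n c_i T_i$ and compute $[\mu, Y_j] = -c_j Z$ using the only non-trivial brackets \eqref{eq:Engel_brackets}; since $Z\neq 0$, this forces $c_j=0$ for all $j$, giving $\Center(\mathbb{En}^n)\cap V_2 = \{0\}$.

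Combining the three contributions yields $\Center(\mathbb{En}^n) = \R Z$, which is one-dimensional, so by Proposition~\ref{lemma:minimal_TFAE} the algebra $\mathbb{En}^n$ is trimmed. There is no serious obstacle here; the only mild subtlety is remembering to use the layerwise decomposition \eqref{center:homog} to organize the computation cleanly rather than analyzing an arbitrary central element all at once.
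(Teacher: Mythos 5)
Your proof is correct and follows essentially the same strategy as the paper: reduce to the equivalent condition $\dim\Center(\mathbb{En}^n)=1$ from Proposition~\ref{lemma:minimal_TFAE}, use homogeneity \eqref{center:homog}, and check layer by layer that $V_1$ and $V_2$ contribute nothing to the center. The only cosmetic difference is that the paper dispatches the $V_1$ computation by citing Lemma~\ref{lemma:properties_of_engels}.(ii) rather than expanding brackets directly, but the underlying calculation is identical.
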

 \begin{proof}
 	Let $ n\in \N $ and let $ \mathbb{En}^n $ be the Engel-type algebra with the defining basis \eqref{eq:Engel_brackets}. 
 	We shall use the third definition of trimmed Lie algebra in Proposition~\ref{lemma:minimal_TFAE}.
 	Recall, see \eqref{center:homog},  that in every step-$s$ stratified Lie algebra $\g$, we have that the center $\Center(\g)$ is graded and $V_s\subseteq \Center(\g)$.
 	Hence,	noticing that  $ \dim V_3 = 1 $, it suffices to show that 
 	$    \Center(\mathbb{En}^n ) \cap V_1 =    \Center(\mathbb{En}^n ) \cap V_2 = \{0\}$. By Lemma~\ref{lemma:properties_of_engels}.ii, we have $ [Y,V_2]\neq 0 $ for every $ Y \in V_1 \setminus \R X $. Since $ X $ is not in the center either, this proves that $    \Center(\mathbb{En}^n ) \cap V_1 =\{0\}$. To show that $   \Center(\mathbb{En}^n ) \cap V_2 = \{0\}$, let $ T \in V_2 $ be nonzero and write
 	\[
 	T \coloneqq \sum_{i=1}^{n}a_iT_i, \quad a_i \in \R \;\forall\; i= 1,\dots,n.
 	\]
 	Then $ a_i\neq 0 $ for some $ i= 1,\dots,n $ and hence
 	\[
 	[Y_i,T] = a_iZ \neq 0. \qedhere
 	\]
 \end{proof} 
 
	\begin{remark}
		A half-space $ W \sus \mathbb{En}^n $ for $ n\geq 2 $ is ample whenever $ \partial W \neq \spann{Y_1,\dots,Y_n} $.
	\end{remark}
\begin{proof}
	By Lemma~\ref{lemma:subalgs_of_engels}, we have that $  \lie{\partial W} $ is isomorphic to $ \mathbb{En}^{n-1} $. In particular, $ V_3(\lie{\partial W}) \neq \{0\}$. Since $  V_3(\lie{\partial W}) \sus V_3(\mathbb{En}^n)$ and $ V_3(\mathbb{En}^n) $ has dimension one, we deduce that $ V_3(\mathbb{En}^n) \sus  \lie{\partial W}  \sus \pedge{\s_W}$. The proof is finished by Lemma~\ref{lemma:step4_equiv_halfspace}. 
\end{proof}

	\begin{prop} \label{prop:Engels:non-CNP}
		None of the
		$ \mathbb{En}^n $ is     ample. Indeed, using the defining basis \eqref{eq:Engel_brackets}, every (of the two) horizontal  half-space $W$ such that $ \partial W = \spann{ Y_1,\dots,Y_n}   $ is not ample.
	\end{prop}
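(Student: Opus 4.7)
My plan is to prove the stronger assertion that $\exp(-Z)\notin\Cl(S_W)$, which immediately yields non-semigeneration of $W$ since $Z=[Y_1,[Y_1,X]]\in[\mathbb{En}^n,\mathbb{En}^n]$. After possibly replacing $W$ by $-W$, I may assume $X\in W$, so every $u\in W$ has the form $u=\alpha X+\sum_i \beta_iY_i$ with $\alpha\geq 0$. The strategy is to construct a polynomial function $F\colon\mathbb{En}^n\to\R$ (in exponential coordinates) such that $F\geq 0$ on $\Cl(\mathfrak s_W)$ while $F(-Z)<0$.

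Every element $\xi\in\mathfrak s_W$ is the logarithm of an endpoint $\gamma(1)$ of a horizontal curve $\gamma\colon[0,1]\to G$ starting at the identity and with piecewise-constant left-invariant velocity $u(s)\in W$. Writing $\gamma(s)=\exp(\xi(s))$ with $\xi_{V_1}=xX+\sum_iy_iY_i$, $\xi_{V_2}=\sum_it_iT_i$ and $\xi_{V_3}=zZ$, the dexp relation
\[
\dot\xi=u+\tfrac12[\xi,u]+\tfrac1{12}[\xi,[\xi,u]]
\]
truncates in $\mathbb{En}^n$ because the nilpotency step equals $3$. Using only the brackets $[Y_i,X]=T_i$ and $[Y_i,T_i]=Z$, I obtain $\dot y_i=\beta_i$, $\dot x=\alpha$, $\dot t_i=\tfrac12(\alpha y_i-x\beta_i)$, and
\[
\dot z=-\tfrac12\sum_it_i\beta_i+\tfrac1{12}\Bigl(\alpha\sum_iy_i^2-x\sum_iy_i\beta_i\Bigr).
\]

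Integrating $\dot z$ over $[0,1]$ and applying integration by parts twice — first using $\dot t_i=\tfrac12(\alpha y_i-x\beta_i)$ and then using $(xy_i^2)'=\alpha y_i^2+2xy_i\beta_i$ — the mixed terms telescope, yielding the identity
\[
z(1)+\tfrac12\sum_it_i(1)y_i(1)+\tfrac16\, x(1)\sum_iy_i(1)^2=\tfrac12\int_0^1\alpha(s)\sum_iy_i(s)^2\,ds,
\]
whose right-hand side is non-negative thanks to $\alpha\geq 0$. Therefore the polynomial $F(\xi):=z+\tfrac12\sum_it_iy_i+\tfrac16\, x\sum_iy_i^2$ satisfies $F\geq 0$ on $\mathfrak s_W$ and, by continuity, on $\Cl(\mathfrak s_W)$. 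Since $F(-Z)=-1<0$, we conclude $-Z\notin\Cl(\mathfrak s_W)$, so $W$ is not semigenerating.

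The principal technical obstacle is the Magnus/BCH bookkeeping that produces the formula for $\dot z$: one must identify precisely which commutator contributions survive in $\mathbb{En}^n$, and the decisive cancellations yielding the non-negative right-hand side appear only after the two integrations by parts conspire correctly. Once this identity is established, the remaining steps (representation of elements of $\mathfrak s_W$ by horizontal curves, continuity of $F$, and the evaluation $F(-Z)=-1$) are routine.
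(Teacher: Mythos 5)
Your argument is correct, and I verified the crucial identity: with $\dot z=-\tfrac12\sum_it_i\beta_i+\tfrac1{12}(\alpha\sum_iy_i^2-x\sum_iy_i\beta_i)$ and $\dot t_i=\tfrac12(\alpha y_i-x\beta_i)$, the derivative of $F=z+\tfrac12\sum_it_iy_i+\tfrac16x\sum_iy_i^2$ along the flow has the $\sum_it_i\beta_i$-terms cancel ($-\tfrac12+\tfrac12$), the $x\sum_iy_i\beta_i$-terms cancel ($-\tfrac1{12}-\tfrac14+\tfrac13=0$), and the $\alpha\sum_iy_i^2$-terms sum to $\tfrac1{12}+\tfrac14+\tfrac16=\tfrac12$, giving $\dot F=\tfrac12\alpha\sum_iy_i^2\geq 0$.

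The strategy is the same as the paper's --- exhibit a scalar function that is non-decreasing along horizontal flows with velocity in $W$, hence non-negative on $\Cl(\s_W)$, and then note it is negative at $-Z$ --- but the realization is genuinely different. The paper writes down an explicit coordinate model of the vector fields on $\R^{2(n+1)}$ in which $Z=\partial_{2(n+1)}$ and the coefficient of $\partial_{2(n+1)}$ in $X$ is $\sum_i x_i^2/2$; the separating set is then literally the half-space $\{x_{2(n+1)}\geq 0\}$, and the monotonicity reads off immediately from the coordinate expression of the flow. You instead stay in exponential coordinates of the first kind, derive the coordinate ODEs from the (step-3 truncated) Magnus expansion, and must then guess the cubic Lyapunov polynomial $F$ and verify it by integration by parts. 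Your version has the advantage of being intrinsic (no need to produce and verify a coordinate model for the group), at the cost of the BCH bookkeeping; the paper's version gets the monotonicity essentially for free once the model is written down. Both are sound; in fact, pushing the paper's last coordinate through the coordinate change to first-kind exponential coordinates would yield a separating polynomial playing the same role as your $F$.
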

	\begin{proof}
		Let $ n\geq 1 $ and let us consider the following explicit representation of the basis elements as vectors in $ \R^{2(n+1)} $:
		\begin{align*}
		Y_i &= \doo i;  \\
		X &= \doo {n+1} + \sum_{i=1}^{n} x_i \doo{n+1+i} + \dfrac{x_i^2}{2}\doo{2(n+1)}; \\
		T_i &= \doo{n+1+i} + x_i\doo{2(n+1)} ;\\
		Z &= \doo {2(n+1)},
		\end{align*}
		where $ i= 1,\dots,n $. It is readily checked that these vector fields satisfy the commutator relations given in Definition~\ref{def:nthEngel}.  
		We consider the    set $$ C \coloneqq \{x \in \R^{2(n+1)} : x_{2(n+1)}\geq 0 \} .$$
	 	We shall show that, for $W\coloneqq \R_+ X \oplus \spann{ Y_1,\dots,Y_n}$,
	 	the set $C$ contains $\Cl(S_W)$ but $\exp(V_3)$ is not contained in $C$.
	 	
	 	Regarding  $\Cl(S_W) \subset C$, we claim that it is enough to show that 
	 	\begin{equation}\label{claim:SW_subset_C}
	 		p\exp  (\R_+ Y) \subseteq C, \quad \forall\, p\in C,\quad \forall \, Y \in W.
	 	\end{equation}
	 	Indeed, assume that  \eqref{claim:SW_subset_C} holds. Since $ C $ is closed, it suffices to prove that $ S_W \sus C $. As $ 0 \in C $, then by \eqref{claim:SW_subset_C} we have that $ \exp(Y) \in C $ for all $ Y \in W $. Then, again by \eqref{claim:SW_subset_C}, for every finite collection $ Y_1,\dots,Y_k \in W $ it holds
	 	\[
	 	\exp(Y_1)\cdots\exp(Y_k) \in C.
	 	\]
	 	Therefore, we conclude by \eqref{def:SW} that
	 	\[
	 	S_W \overset{\eqref{def:SW}}{=} \bigcup_{k=1}^\infty (\exp(W))^k \sus C,
	 	\]
	 	which we needed to show.

		 To prove claim \eqref{claim:SW_subset_C}, we use the fact that the curve $t\mapsto p\exp  (t Y)$ is the flow line of the vector field $Y$ starting from $p$. Write $Y= a X + \sum_{i=1}^n b_i Y_i$ with $a\geq 0$ and $b_i\in \R$.
		 The ODE given by this vector field is
		 \begin{equation*}
	 		 \doo{t}  \big(p\exp  (t X) \big) = a X_{p\exp  (t X)} + \sum_{i=1}^n b_i (Y_i)_{p\exp  (t X)}.
		 \end{equation*}
		In particular, from the above expression of the vector fields in coordinates, we have that  the $2(n+1)$-th component of $ p\exp  (t X)$ satisfies
		 \begin{equation}\label{eq:monotone_dir}
	 		 \doo{t} \big(p\exp  (t X)\big)_{2(n+1)} = \sum_{i=1}^n \dfrac{\big(p\exp  (t X)\big)_i^2}{2},
		 \end{equation}
		 which is non-negative. Notice that if $ p\in C $, then at time $t=0$ the $2(n+1)$-th component is non-negative, i.e., $\big(p\exp  (t X)\big)_{2(n+1)} = p_{2(n+1)}\geq 0$. Therefore, $p\exp  (t X) \in C$ for all $t\in \R_+$ by \eqref{eq:monotone_dir}, and so claim \eqref{claim:SW_subset_C} is proven.
		 
To   see that $\exp(V_3)$ is not contained in $C$, we  observe that $V_3=
\R Z$ and $Z = \doo {2(n+1)}$. We get the conclusion since $C$ is not $ \doo {2(n+1)}$-invariant.
	\end{proof} 

For the time being, we do not know if the intrinsic $ C^1 $-rectifiability result for finite-perimeter sets according to \cite{fssc}  holds in non-ample Carnot groups. However, in \cite[Corollary 5.12]{DLMV}, \emph{a priori} weaker intrinsic Lipschitz rectifiability of finite-perimeter sets was shown in every Carnot group that admits a non-abnormal horizontal line. As a final remark before proceeding to the proof of Theorem ~\ref{t:nonCNP:char:step6}, we shall characterize non-abnormal horizontal lines of the $ n $-th Engel-type algebras for $ n\geq 2 $. The case $ n=1 $ is treated in \cite[Section 6]{DLMV}. As a consequence, we have that the reduced boundary of a finite-perimeter set in any Engel-type algebra is intrinsically Lipschitz rectifiable.
\begin{remark}
	Let $ n\geq 2 $ and consider $ \mathbb{En}^n $ with the basis $ \{X,Y_1,\dots,Y_n \} $ satisfying \eqref{eq:Engel_brackets}. Then, for $ \nu \in V_1\setminus\{0\}$, the line $ t\mapsto \exp(t \nu) $ is abnormal if and only if $ \nu \in \spann{Y_1,\dots,Y_n}\cup \R X $.
\end{remark}
\begin{proof}
	By \cite[Proposition 5.10]{DLMV} and the fact that $ \mathbb{En}^n $ is stratified, for a given $ \nu \in V_1\setminus\{0\}$, the curve $ t\mapsto \exp(t \nu) $ is non-abnormal if and only if
	\begin{equation*}\label{eq:adjoint}
\mathrm{(i)}\quad \mathrm{ad}_\nu (V_1) = V_2 \ja \mathrm{(ii)}\quad\mathrm{ad}^2_\nu (V_1) = V_3.
\end{equation*}
	We prove first that, for every $ \nu \in \spann{Y_1,\dots,Y_n}\cup \R X $, the line $ t\mapsto \exp(t \nu) $ is abnormal.	Indeed, if $ \nu \in \R X $, then $ [\nu, V_2] = \{0\}$ by Lemma \ref{lemma:properties_of_engels}.ii. So $ \ad_\nu^2(V_1) \sus [\nu,V_2] = \{0\} $ and condition (ii) is not satisfied.
	Let then $ \nu \in \spann{Y_1,\dots, Y_n} $ and consider the linear map $ \ad_\nu \colon V_1 \to V_2 $. Notice that  $ \spann{Y_1,\dots, Y_n} \sus \ker( \ad_\nu) $, since $ \spann{Y_1,\dots, Y_n} $ is abelian by Lemma \ref{lemma:properties_of_engels}.i. Therefore, $ \ad_\nu(V_1) $ is 1-dimensional, which violates condition (i) as now $ \dim V_2 > 1 $.
	
	Assume then $ \nu \notin \spann{Y_1,\dots,Y_n}\cup \R X $ and let us show that the line $ t\mapsto \exp(t \nu) $ is non-abnormal by proving that $ \nu $ satisfies conditions (i) and (ii). Now  $ \nu $ can be written as
	\[
	\nu = aX + \sum_{i=1}^{n}a_i Y_i,
	\]
	where $ a,a_i \in \R $ are such that $ a\neq 0 $ and $ a_j\neq 0 $ for some $ j= 1,\dots,n $. Then 
	\begin{align*}
	[\nu, \spann{Y_1,\dots,Y_n}] &= \mathrm{span}{\{[\nu,Y_i] \,:\, i= 1,\dots,n\}} \\
	&= \mathrm{span}{\{[aX,Y_i] \,:\, i= 1,\dots,n\}} \\
	&=  \mathrm{span}{\{T_i \,:\, i= 1,\dots,n\}}\\
	&= V_2.
	\end{align*}
	Consequently, condition (i) is satisfied. To prove that also (ii) holds, let $ j\inn $ be such that  $ a_j\neq 0 $ and notice that $ [\nu, \R Y_j] = \R T_j $. Then
	\[
	\ad_\nu^2(\R Y_j) = [aX + \sum_{i=1}^{n}a_i Y_i, \R T_j] = [a_jY_j,\R T_j]= \R Z = V_3.
	\]
\end{proof}	
	\subsection{Proof of Theorem~\ref{t:nonCNP:char:step6}}
 We conclude with the following characterization of trimmed non-ample Carnot algebras in step 3. As a corollary, we shall obtain Theorem~\ref{t:nonCNP:char:step6}.
	\begin{prop}\label{prop:minimal_nonCNP_step3_isEngeln}
		Let $ \g $ be a
		stratified Lie algebra of step 3.
		If $ \g $ is  trimmed and not   ample,
		then  
		it is isomorphic to some $ \mathbb{En}^n $. 
	\end{prop}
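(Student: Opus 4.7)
The plan is to construct an explicit basis of $\g$ realizing the defining relations \eqref{eq:Engel_brackets} of some Engel-type algebra $\mathbb{En}^n$. Since $\g$ is trimmed of step 3, we have $V_3 \subseteq \Center(\g)$ and $\dim \Center(\g) = 1$, so $\Center(\g) = V_3$ is a line. Fix a non-semigenerating horizontal half-space $W$, write $\s \coloneqq \Cl(\s_W)$ and $\mathfrak e \coloneqq \edge{\s}$, and note that $\partial W = \mathfrak e \cap V_1$ is a hyperplane. By Lemma~\ref{lemma:step4_equiv_halfspace}, non-semigeneration of $W$ forces $V_3 \cap \mathfrak e = \{0\}$.

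The first key step is to show $\mathfrak e \cap V_2 = \{0\}$, which will in turn imply that $\partial W$ is abelian. Given $T \in \mathfrak e \cap V_2$, Lemma~\ref{lemma:step3_ideal(invV2)_is_inv} gives $\Ideal_\g(T) \subseteq \mathfrak e$; since $[V_1, T] \subseteq V_3$ in step 3, we get $[V_1, T] \subseteq \mathfrak e \cap V_3 = \{0\}$, so $T \in \Center(\g) \cap V_2 = \{0\}$. Abelianness of $\partial W$ then follows, as $[\partial W, \partial W] \subseteq \mathfrak e \cap V_2 = \{0\}$ by Lemma~\ref{lem:CH}.

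Next I fix $X \in V_1 \setminus \partial W$ with $X \in \mathfrak w_\s$ and study the map $\ad_X \colon \partial W \to V_2$. Its kernel is trivial, because any $Y$ in it would commute with all of $V_1 = \R X \oplus \partial W$, hence lie in $\Center(\g) \cap V_1 = \{0\}$. Thus $\ad_X|_{\partial W}$ is an isomorphism and $n \coloneqq \dim \partial W = \dim V_2$. Consider next the bilinear map $B(Y_1, Y_2) \coloneqq [Y_1, [Y_2, X]]$ from $\partial W \times \partial W$ to $V_3$; Jacobi plus abelianness of $\partial W$ make it symmetric. The crucial fact is that $B$ is definite: if $B(Y, Y) = \ad_Y^2 X = 0$, then $\ad_Y^2 X \in \mathfrak e$, so Lemma~\ref{lemma:new_invariant_directions} yields $[X, Y] \in \mathfrak e \cap V_2 = \{0\}$, forcing $Y = 0$.

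The remaining step is bookkeeping. Choose a generator $Z$ of $V_3$ making $B$ positive definite, take a $B$-orthonormal basis $Y_1, \dots, Y_n$ of $\partial W$, and set $T_i \coloneqq [Y_i, X]$, so that $\{T_i\}$ is a basis of $V_2$ with $[Y_i, T_j] = B(Y_i,Y_j) = \delta_{ij} Z$. Finally, to enforce $[X, V_2] = \{0\}$, write $[X, T_j] = \lambda_j Z$ and replace $X$ by $X' \coloneqq X - \sum_j \lambda_j Y_j \in V_1 \setminus \partial W$; using $[Y_i, Y_j] = 0$ one checks that $[Y_i, X'] = T_i$ and $[Y_i, T_j] = \delta_{ij} Z$ persist while $[X', T_j] = \lambda_j Z - \sum_k \lambda_k \delta_{kj} Z = 0$. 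The basis $\{X', Y_i, T_i, Z\}$ then satisfies the relations of $\mathbb{En}^n$ (with $[T_i,T_j]=0$ automatic since $V_4=0$). The main obstacle is the combined argument that $\partial W$ is abelian and that $B$ is definite; both pivot on the crucial identity $\mathfrak e \cap V_2 = \{0\}$, after which everything reduces to linear algebra.
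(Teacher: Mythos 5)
Your proof is correct, but it takes a genuinely different route from the paper's. The paper proves Proposition~\ref{prop:minimal_nonCNP_step3_isEngeln} by first establishing that $\partial W$ is abelian and that every rank-$n$ Carnot subalgebra with $V_1(\h)\neq\partial W$ is again trimmed and not semigenerated, and then running an induction on the rank that invokes the characterization of $\mathbb{En}^n$ via its rank-$n$ subalgebras (Proposition~\ref{prop:char_of_engels}). You instead construct the defining basis of $\mathbb{En}^n$ directly. Both proofs share the same starting observation that $\edge\s\cap V_2=\{0\}$, which the paper uses implicitly when proving $\partial W$ is abelian, but you push it much harder: the injectivity of $\ad_X|_{\partial W}$ and the definiteness of the symmetric $V_3$-valued form $B(Y_1,Y_2)=[Y_1,[Y_2,X]]$ are the new ingredients that let you bypass the induction and the subalgebra characterization entirely. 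The step where you correct $X$ to $X'=X-\sum_j\lambda_j Y_j$ so that $[X',V_2]=0$ is a clean piece of linear algebra that the paper does not need because its induction handles the bookkeeping differently. Your argument is shorter and more self-contained; the paper's approach has the side benefit of simultaneously establishing Proposition~\ref{prop:char_of_engels}, which the authors also use elsewhere. One small presentational caveat: when you deduce $V_3\cap\mathfrak e=\{0\}$ from Lemma~\ref{lemma:step4_equiv_halfspace}, it is the combination of non-semigeneration (which gives $V_3\not\subseteq\mathfrak e$), $\dim V_3=1$ (from trimmedness), and the fact that $\mathfrak e$ is a homogeneous subspace that yields the stated equality; you state the ingredients earlier, but the chain of implications is worth making explicit at that point.
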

	\begin{proof}
		We start by proving the following fact.
		
		If $ \g $ is a trimmed Lie algebra of step 3 and rank $ n+1 $ with a non-ample half-space $ W \sus V_1 $, then
		\begin{equation}\label{claim_abelian}
		\partial W  \text{ is abelian;}
		\end{equation}
		and
		\begin{equation}
		\label{claim_minimal}
		\begin{array}{c}
		\text{for }   n+1\geq 3   \text{   every rank-} n\text{ Carnot subalgebra }     \h \text{ with } V_1(\h) \neq \partial W, \\
		\text{ is trimmed and not ample.} 
		\end{array}
		\end{equation}
		%
		Regarding the proof of \eqref{claim_abelian}, suppose by contradiction that there exist $ Y_1,Y_2 \in \partial W $ such that $ [Y_1,Y_2]\neq 0 $. 
Set $\s:= \Cl (\s_W )$. 
As $ \partial W \sus \edge   \s  $,  we have by Lemma~\ref{lem:CH}.2 that $  [Y_1,Y_2]\sus  \edge\s $. 
		Then on the one hand, by Lemma~\ref{lemma:step3_ideal(invV2)_is_inv} we have $ \Ideal_{\g}([Y_1,Y_2]) \sus \edge \s $.  On the other hand, since $ \g $ is trimmed, we have that $ V_3 \sus \Ideal_{\g}([Y_1,Y_2]) $, as the latter is a nontrivial ideal. 
		Hence, we infer that  $V_3 \sus\edge\s$. Consequently,   according to Lemma~\ref{lemma:step4_equiv_halfspace} we get a contradiction, since $W$ was assumed not to be  ample.   
		Thus \eqref{claim_abelian} is proved.
		
		Regarding the proof of \eqref{claim_minimal}, 
		we show first that
		\begin{equation}\label{primo_punto}
		 \Center(\h)\cap V_1(\h)= \{0\},
		\end{equation}
	\begin{equation}\label{secondo_punto}
	 \Center(\h)\cap V_2(\h)= \{0\}.
		\end{equation}
Before proving \eqref{primo_punto}, setting $H \coloneqq  V_1(\h) $,
		  we claim that
			\begin{equation}\label{terzo_punto} 
		\text{ if } 	X \in H\setminus \partial W \text{ and } Y\in \partial W \cap H \text{ satisfy }  [X,Y]= 0, \text{ then }  Y = 0 .
				\end{equation}
		  Indeed, since we have the decomposition $ V_1(\g) = \partial W \oplus \R X$
		  and $ \partial W $ is abelian by \eqref{claim_abelian}, we get that $Y$ commutes with $ V_1(\g)$. Since $ V_1(\g)$ generates $\g$, we get that $  Y\in \Center(\g)\cap V_1(\g)$, but the latter is trivial since $ \g $ is trimmed.
		  Hence, we get \eqref{terzo_punto}.
		  
Regarding the proof of 	\eqref{primo_punto}, we assume, aiming for contradiction, that there exists a nonzero element in  $   \Center(\h)\cap V_1(\h)= \Center(\h)\cap H$. We have two possibilities: either the element is of the form $X\in H\setminus \partial W $ or it is of the form  $Y\in \partial W \cap H$.
In the first case, since $  \partial W \cap H$ is  not trivial being the intersection of two hyperplanes, we can find a nonzero $Y$ in it contradicting  \eqref{terzo_punto}. In the second case, since $H\setminus \partial W $ is not empty being the two sets different hyperplanes, we can find $X$ in it contradicting  \eqref{terzo_punto}. Together these two cases prove \eqref{primo_punto}.
 
			
			Regarding \eqref{secondo_punto}, 
			 assume the contrary and let $ Z \in  \Center(\h)\cap V_2(\h) $ be nonzero. Applying \eqref{l:V2:cap:Z(g):univ:invariant} for $ \h $ gives   $   Z \in \pedge{\Cl( \s_{W\cap H})} $\,, where the latter is a subset of $ \edge{  \s }$. 
			 Similarly to the proof of \eqref{claim_abelian}, then by the fact that $ \g $ is trimmed and by Lemma~\ref{lemma:step3_ideal(invV2)_is_inv} we have
			\[
			V_3 \sus \Ideal_{\g}(Z) \sus\edge{  \s }.
			\]
			 Since $W$ is not ample. by Lemma~\ref{lemma:step4_equiv_halfspace} we get a contradiction. So  \eqref{secondo_punto} is proved.

			Properties \eqref{primo_punto} and  \eqref{secondo_punto}, together with the fact that $\Center(\h)$ is a non-trivial homogeneous subspace of $\h$, imply that
			$$\{0\} \neq  \Center(\h) \sus V_3(\h)\sus V_3(\g) .$$ Since $ V_3(\g) $ has dimension 1 as $ \g $ is trimmed, we get that $ \Center(\h)$ has dimension 1, i.e., we obtained that $ \h $ is a trimmed step-3 Lie algebra.  
			
			
			The fact that $ \h $ is not ample follows then from Lemma~\ref{lemma:CNPstep3_subalg_implies_CNP}: since $ \dim V_3(\g)  = 1$ and $ V_3(\h)\neq \{0\} $, then $ V_3(\h) = V_3(\g) $. Thus \eqref{claim_minimal} is proved.

		We complete the proof by an induction argument. The initial step is given for rank $ n + 1 = 2 $ by the classical Engel algebra $ \mathbb{En}^1 $, since every trimmed Carnot algebra of rank 2 and step 3 is isomorphic to $ \mathbb{En}^1 $ (see Remark \ref{rmk:filiforms_engel_quots}).  Let then $ n+1\geq 3 $ and suppose, by induction assumption, that every trimmed step-3  non-ample Lie algebra of rank $ n $ is isomorphic to $ \mathbb{En}^{n-1} $. Let $ \g $ be a trimmed step-3  non-ample Lie algebra of rank $ n+1 $. By \eqref{claim_abelian} and \eqref{claim_minimal}, the Lie algebra $ \g $ has a unique abelian  stratified subalgebra of rank $ n $ and, by induction assumption, every other rank-$ n $ stratified subalgebra is isomorphic to $ \mathbb{En}^{n-1} $. As $ \dim V_3(\g) = 1 $ since $ \g $ is trimmed, we conclude that $ \g $ is isomorphic to $ \mathbb{En}^{n} $ by Proposition~\ref{prop:char_of_engels}.
	\end{proof}

  	\subsubsection{Proof of Theorem~\ref{t:nonCNP:char:step6}} To prove the easy direction of Theorem~\ref{t:nonCNP:char:step6}, we recall that every Engel-type algebra is not ample, see Proposition~\ref{prop:Engels:non-CNP}. Hence, if a Carnot algebra $\g$ has an Engel-type algebra as a quotient, then $\g$ is not ample, see Proposition~\ref{prop:quotients_product_ample}.
	Regarding the other implication of Theorem~\ref{t:nonCNP:char:step6}, let $\g$ be a step-3 Carnot algebra that is not   ample.
 By Proposition~\ref{prop:ample_quotient_small_center}, there exists a quotient algebra of $ \g $ that is
		trimmed and not     ample. Notice that, being not ample, still such a quotient has step 3. 
 Proposition~\ref{prop:minimal_nonCNP_step3_isEngeln} makes us conclude that such a quotient is isomorphic to some 
 Engel-type algebra. \qed


   \bibliography{general_bibliography}
\bibliographystyle{amsalpha}

\end{document}